\documentclass[11pt]{amsart}

%% Geometry adjustment
\setlength{\textwidth}{\paperwidth}
\addtolength{\textwidth}{-2.5in}
\setlength{\textheight}{\paperheight}
\addtolength{\textheight}{-2in}
\calclayout

%% Input handling:
\usepackage[utf8]{inputenc}  % to allow unicode in source

%% AMS and other general math packages:
\usepackage{amsfonts}
\usepackage{amsmath}
\usepackage{amsthm}
\usepackage{amssymb}
\usepackage{mathtools} % for \mathclap, used to center extra-wide diagrams.
\usepackage{eucal} % for \mathscr

%% General style packages:
\usepackage{xcolor}
\definecolor{darkgreen}{rgb}{0,0.45,0}
\definecolor{darkred}{rgb}{0.75,0,0}
\definecolor{darkblue}{rgb}{0,0,0.6}
\usepackage[colorlinks,citecolor=darkgreen,linkcolor=darkred,urlcolor=darkblue]{hyperref}
\usepackage{breakurl}
\usepackage{enumerate} % for customising enumerated lists

%% For syntax of type theory:
\usepackage{mathpartir}

%% Graphics and diagrams packages:
\usepackage{tikz}
\usetikzlibrary{arrows}
\usepackage[all]{xy}
\xyoption{2cell}
\xyoption{curve}
\UseTwocells
\input{diagxy}  % for better inline arrows
\usetikzlibrary{matrix,arrows}

%%%%%%%
%% My (pll’s) attempt at imitating xypic arrows:

\pgfarrowsdeclare{xyto}{xyto}
{
  \arrowsize=0.22pt
  \advance\arrowsize by .7\pgflinewidth  % so at default, \arrowsize=0.5pt
  \pgfarrowsleftextend{-10\arrowsize-.5\pgflinewidth}
  \pgfarrowsrightextend{.5\pgflinewidth}
}
{
  \arrowsize=0.22pt
  \advance\arrowsize by .7\pgflinewidth  % so at default, \arrowsize=0.5pt
  \pgfsetdash{}{0pt} % do not dash
  \pgfsetroundjoin	% fix join
  \pgfsetroundcap	% fix cap 
  \pgfpathmoveto{\pgfpointorigin}
  \pgfpathcurveto
     {\pgfpoint{-4.2\arrowsize}{0.6\arrowsize}}
     {\pgfpoint{-7.2\arrowsize}{1.5\arrowsize}}
     {\pgfpoint{-10\arrowsize}{3.1\arrowsize}}
  \pgfusepathqstroke
  \pgfpathmoveto{\pgfpointorigin}
  \pgfpathcurveto
     {\pgfpoint{-4.2\arrowsize}{-0.6\arrowsize}}
     {\pgfpoint{-7.2\arrowsize}{-1.5\arrowsize}}
     {\pgfpoint{-10\arrowsize}{-3.1\arrowsize}}
  \pgfusepathqstroke
}

\newlength{\ontotipoffset}

\pgfarrowsdeclare{xyonto}{xyonto}
{
  \arrowsize=0.22pt
  \advance\arrowsize by .7\pgflinewidth  % so at default, \arrowsize=0.5pt
  \pgfarrowsleftextend{-10\arrowsize-.5\pgflinewidth}
  \pgfarrowsrightextend{.5\pgflinewidth}
}
{
  \arrowsize=0.22pt
  \advance\arrowsize by .7\pgflinewidth  % so at default, \arrowsize=0.5pt
  \ontotipoffset=-6\arrowsize
  \pgfsetdash{}{0pt} % do not dash
  \pgfsetroundjoin	% fix join
  \pgfsetroundcap	% fix cap 
  \pgfpathmoveto{\pgfpointorigin}
  \pgfpathcurveto
     {\pgfpoint{-4.2\arrowsize}{0.6\arrowsize}}
     {\pgfpoint{-7.2\arrowsize}{1.5\arrowsize}}
     {\pgfpoint{-10\arrowsize}{3.1\arrowsize}}
  \pgfusepathqstroke
  \pgfpathmoveto{\pgfpointorigin}
  \pgfpathcurveto
     {\pgfpoint{-4.2\arrowsize}{-0.6\arrowsize}}
     {\pgfpoint{-7.2\arrowsize}{-1.5\arrowsize}}
     {\pgfpoint{-10\arrowsize}{-3.1\arrowsize}}
  \pgfusepathqstroke
  \pgfpathmoveto{\pgfpoint{\ontotipoffset}{0}}
  \pgfpathcurveto
     {\pgfpoint{\ontotipoffset-4.2\arrowsize}{0.6\arrowsize}}
     {\pgfpoint{\ontotipoffset-7.2\arrowsize}{1.5\arrowsize}}
     {\pgfpoint{\ontotipoffset-10\arrowsize}{3.1\arrowsize}}
  \pgfusepathqstroke
  \pgfpathmoveto{\pgfpoint{\ontotipoffset}{0}}
  \pgfpathcurveto
     {\pgfpoint{\ontotipoffset-4.2\arrowsize}{-0.6\arrowsize}}
     {\pgfpoint{\ontotipoffset-7.2\arrowsize}{-1.5\arrowsize}}
     {\pgfpoint{\ontotipoffset-10\arrowsize}{-3.1\arrowsize}}
  \pgfusepathqstroke
}

\pgfarrowsdeclare{xytri}{xytri}
{
  \arrowsize=0.22pt
  \advance\arrowsize by .7\pgflinewidth  % so at default, \arrowsize=0.5pt
  \pgfarrowsleftextend{-.5\pgflinewidth}
  \pgfarrowsrightextend{10\arrowsize+.5\pgflinewidth}
}
{
  \arrowsize=0.22pt
  \advance\arrowsize by .7\pgflinewidth  % so at default, \arrowsize=0.5pt
  \pgfsetdash{}{0pt} % do not dash
  \pgfsetroundjoin	% fix join
  \pgfsetroundcap	% fix cap 
  \pgfpathmoveto{\pgfpoint{10\arrowsize}{0}}
  \pgfpathcurveto
     {\pgfpoint{5.8\arrowsize}{0.6\arrowsize}}
     {\pgfpoint{2.8\arrowsize}{1.5\arrowsize}}
     {\pgfpoint{0\arrowsize}{3.1\arrowsize}}
  \pgfpathmoveto{\pgfpoint{10\arrowsize}{0}}
  \pgfpathcurveto
     {\pgfpoint{5.8\arrowsize}{-0.6\arrowsize}}
     {\pgfpoint{2.8\arrowsize}{-1.5\arrowsize}}
     {\pgfpoint{0\arrowsize}{-3.1\arrowsize}}
  \pgflineto{\pgfpoint{0}{3.1\arrowsize}}
  \pgfusepathqstroke
} % for arrow tips in TikZ matching xypic’s style
\tikzset{>=xyto}
\pgfarrowsdeclarealias{c}{c}{right hook}{left hook}
\pgfarrowsdeclarealias{c'}{c'}{left hook}{right hook}

\newbox\pbbox
\setbox\pbbox=\hbox{\xy \POS(65,0)\ar@{-} (0,0) \ar@{-} (65,65)\endxy}
\def\pb{\save[]+<3.5mm,-3.5mm>*{\copy\pbbox} \restore}
% A pullback marker for xymatrix diagrams.
% Usage: add “\pb” in the matrix node of the pullback object, as for instance:
% \xymatrix{ A \ar[r] \ar[d] \pb & B \ar[d] \\ C \ar[r] & D}
%
% To adjust placement in individual diagrams, instead of “\pb” include the
% code following “\def\pb”, and play around with the positioning “<3mm,-3mm>”.

%% Theorem environment declarations (using amsthm):
\theoremstyle{plain}
\newtheorem{theorem}{Theorem}[subsection]
\newtheorem{axiom}[theorem]{Axiom}

\newtheorem{proposition}[theorem]{Proposition}
\newtheorem{lemma}[theorem]{Lemma}
\newtheorem{corollary}[theorem]{Corollary}
\newtheorem{conjecture}[theorem]{Conjecture}

\theoremstyle{definition}
\newtheorem{definition}[theorem]{Definition}

\newtheorem{example}[theorem]{Example}
\newtheorem{examples}[theorem]{Examples}
\newtheorem{notation}[theorem]{Notation}
\newtheorem{remark}[theorem]{Remark}

\newcommand{\extraqedhere}{ \pushQED{\qed} \qedhere  \popQED } % for correctly-placed QEDs in theorem statements

\makeatletter
\renewcommand{\paragraph}{\@startsection{paragraph}{4}{0mm}{-0.5\baselineskip}{-1ex}{\bf}}
\makeatother

%% Letter-like macros:

\newcommand{\CC}{\mathcal{C}}
\newcommand{\DD}{\mathcal{D}}
\newcommand{\Ebar}{\overline{E}}
\newcommand{\Hbar}{\overline{H}}
\newcommand{\I}{\mathcal{I}}
\newcommand{\N}{\mathbb{N}}
\newcommand{\paths}{\mathrm{P}}
\renewcommand{\P}{\mathbf{P}}
\newcommand{\PP}{\mathrm{P}}

\newcommand{\TT}{\mathsf{\mathbf{T}}}
\newcommand{\U}{\mathbf{U}_{\alpha}}
\newcommand{\UU}[1][\alpha]{{\mathrm{U}_{#1}}}
  %% The extra layer of brackets is needed so that in eg $E^\UU$, the subscript $\alpha$ stays on the $U$.
\newcommand{\UUt}{{\widetilde{\mathrm{U}}_{\alpha}}}
\newcommand{\W}{{\mathbf{W}_{\alpha}}}
\newcommand{\WW}{{\mathrm{W}_{\alpha}}}
\newcommand{\WWt}{{\widetilde{\mathrm{W}}_{\alpha}}}
\newcommand{\y}{\mathbf{y}}

%% Pi’s (there are too bloody many of these) and Sigma’s and thing’s too.
\newcommand{\IdStrux}{\mathrm{Id}}
\newcommand{\PiStrux}{\Pi}
\newcommand{\SigmaStrux}{\Sigma}
\newcommand{\WStrux}{\mathrm{W}}
\newcommand{\zeroStrux}{\mathbf{0}}
\newcommand{\oneStrux}{\mathbf{1}}
\newcommand{\plusStrux}{+}

%% Words, operators, etc:
\newcommand{\colim}{\operatorname{colim}}
\newcommand{\ev}{\operatorname{ev}}
\newcommand{\ft}{\operatorname{ft}}
\newcommand{\gen}{{\mathrm{gen}}}
\newcommand{\ob}{\operatorname{Ob}}
\newcommand{\op}{\mathrm{op}}
\newcommand{\pt}{1}
\newcommand{\Sets}{\mathcal{S}\mathrm{et}}% {\mathbf{Sets}}
\newcommand{\sSets}{\mathrm{s}\mathcal{S}\mathrm{et}}% {\mathbf{sSets}}
% Internal hom-objects and similar
\newcommand{\intl}[1]{\mathbf{#1}}
\newcommand{\intEq}{\intl{Eq}}
\newcommand{\intHIso}{\intl{HIso}}
\newcommand{\intHom}{\intl{Hom}}
\newcommand{\intHomLHInv}{\intl{HomLInv}}
\newcommand{\intHomRHInv}{\intl{HomRInv}}
\newcommand{\intEqLHInv}{\intl{EqLInv}}
\newcommand{\intEqRHInv}{\intl{EqRInv}}
% External hom-objects and similar
\newcommand{\extEq}{\mathrm{Eq}}
\newcommand{\extHIso}{\mathrm{HIso}}
\newcommand{\extHomLHInv}{\mathrm{HomLInv}}
\newcommand{\extHomRHInv}{\mathrm{HomRInv}}
\newcommand{\Hom}{\operatorname{Hom}}

%% Binary relations:
\newcommand{\adjoint}{\dashv}

\newcommand*{\into}{\ensuremath{\lhook\joinrel\relbar\joinrel\rightarrow}}
\newcommand{\iso}{\cong}
\newcommand{\homot}{\simeq}
\newcommand{\shortto}{\rightarrow}

%% Other macros:
\newcommand{\name}[1]{{\ulcorner {#1} \urcorner}}
\newcommand{\nbhyph}{\mbox{-}}
\newcommand{\lscott}{[\![}
\newcommand{\rscott}{]\!]}
\newcommand{\interp}[1]{{\lscott {#1} \rscott}} 

%% Topos-theoretic functors
\newcommand{\toposPi}{\mathit{\Pi}}
\newcommand{\toposSigma}{\mathit{\Sigma}}

%% Basic syntax of type theory:
\let\syn\mathsf
% judgements
\newcommand{\oftype}{\mathord{:}}
\newcommand{\types}{\vdash}
\newcommand{\type}{\syn{type}}
\newcommand{\cxt}{\syn{cxt}}
\newcommand{\emptycxt}{[\ ]}
% rules:
\newcommand{\form}{\textsc{form}}
\newcommand{\intro}{\textsc{intro}}
\newcommand{\appRule}{\textsc{app}}
\newcommand{\elim}{\textsc{elim}}
\newcommand{\comp}{\textsc{comp}}
\newcommand{\Weak}{\textsc{Wkg}}
\newcommand{\Vble}{\textsc{Vble}}

\newcommand{\Subst}{\textsc{Subst}}
% type constructors of the theory
\newcommand{\synId}{\syn{Id}}
\newcommand{\synPi}{\syn{\Pi}}
\newcommand{\synSigma}{\syn{\Sigma}}
\newcommand{\synW}{\syn{W}}
\newcommand{\synOne}{\syn{1}}
\newcommand{\synZero}{\syn{0}}

\newcommand{\synU}{\syn{U}}
% term constructors of the theory
\newcommand{\el}{\syn{El}}

\newcommand{\app}{\syn{app}}
\newcommand{\pair}{\syn{pair}}
\newcommand{\synsplit}{\syn{split}}
\newcommand{\refl}{\syn{refl}}
\newcommand{\synJ}{\syn{J}}
\newcommand{\synsup}{\syn{sup}}
\newcommand{\wrec}{\syn{wrec}}
\newcommand{\synrec}{\syn{rec}}
\newcommand{\inl}{\syn{inl}}
\newcommand{\inr}{\syn{inr}}
\newcommand{\syncase}{\syn{case}}
\newcommand{\ext}{\syn{ext}}
\newcommand{\extcomp}{\syn{ext\text{-}comp}}

% constructors on the internal universe
\newcommand{\synpi}{\boldsymbol{\pi}}
\newcommand{\synsigma}{\boldsymbol{\sigma}}
\newcommand{\synid}{\syn{id}}
\newcommand{\synz}{\syn{z}}
\newcommand{\syno}{\syn{o}}
\newcommand{\synw}{\syn{w}}
\newcommand{\smallplus}{\mathbin{\mathchoice
  {\raisebox{0.26ex}{$\scriptscriptstyle{+}$}}
  {\raisebox{0.26ex}{$\scriptscriptstyle{+}$}}
  {\raisebox{0.12ex}{$\scriptscriptstyle{+}$}}
  {+}
}}

%% Defined syntax of type theory:
\newcommand{\synisHIso}{\syn{isHIso}}
\newcommand{\synHIso}{\syn{HIso}}

\newcommand{\synHomLHInv}{\syn{HomLInv}}
\newcommand{\synHomRHInv}{\syn{HomRInv}}
\newcommand{\synLHInv}{\syn{LInv}}
\newcommand{\synRHInv}{\syn{RInv}}
\newcommand{\synisUnivalent}{\syn{isUnivalent}}

%% Misc
\newcommand{\Utildestrut}{\mathclap{\phantom{\tilde{U}}}}
\newcommand{\USigstrut}{\mathclap{\phantom{U}}}
  % for making heights of subscripts match.

%% Comments for drafts:

\begin{document}
\title[The simplicial model of Univalent Foundations (after Voevodsky)]%
  {The simplicial model of Univalent Foundations \\ (after Voevodsky)}

\author[K. Kapulkin]{Krzysztof Kapulkin}
\address[Krzysztof Kapulkin]{University of Pittsburgh}
\email{krk56@pitt.edu}

\author[P. LeF. Lumsdaine]{Peter LeFanu Lumsdaine}
\address[Peter LeFanu Lumsdaine]{Institute for Advanced Study, Princeton}
\email{plumsdaine@ias.edu}

\subjclass[2010]{Primary 03B15, 55U10; Secondary 18C50, 55U35}

\date{November 2012; last revised September 2018}

\begin{abstract}
We present Voevodsky’s construction of a model of univalent type theory in the category of simplicial sets.

To this end, we first give a general technique for constructing categorical models of dependent type theory, using universes to obtain coherence.  We then construct a (weakly) universal Kan fibration, and use it to exhibit a model in simplicial sets.  Lastly, we introduce the Univalence Axiom, in several equivalent formulations, and show that it holds in our model.

As a corollary, we conclude that Martin-Löf type theory with one univalent universe (formulated in terms of contextual categories) is at least as consistent as ZFC with two inaccessible cardinals.
\end{abstract}

\maketitle

\begin{flushright}
\emph{To the memory of Vladimir Voevodsky.}
\end{flushright}

\tableofcontents

\clearpage

\section*{Introduction}

The Univalent Foundations programme is a new proposed approach to foundations of mathematics, originally suggested by Vladimir Voevodsky in \cite{voevodsky:homotopy-lambda-calculus}, building on the systems of dependent type theory developed by Martin-Löf and others.

A major motivation for earlier work with such logical systems has been their well-suitedness to computer implementation.  One notable example is the Coq proof assistant, based on the Calculus of Inductive Constructions (a closely related dependent type theory), which has shown itself feasible for large-scale formal verification of mathematics, with developments including formal proofs of the Four-Colour Theorem \cite{gonthier:four-color} and the Feit-Thompson (Odd Order) Theorem \cite{gonthier:feit-thompson}.

One feature of dependent type theory which has previously remained comparatively unexploited, however, is its richer treatment of equality.  In traditional foundations, equality carries no information beyond its truth-value: if two things are equal, they are equal in at most one way.  This is fine for equality between elements of discrete sets; but it is unnatural for objects of categories (or higher-dimensional categories), or points of spaces.  In particular, it is at odds with the informal mathematical practice of treating isomorphic (and sometimes more weakly equivalent) objects as equal; which is why this usage must be so often disclaimed as an abuse of language, and kept rigorously away from formal statements, even though it is so appealing.

In dependent type theory, equalities can carry information: two things may be equal in multiple ways.  So the basic objects—the \emph{types}—may behave not just like discrete sets, but more generally like higher groupoids (with equalities being morphisms in the groupoid), or spaces (with equalities being paths in the space).  And, crucially, this is the \emph{only} equality one can talk about within the logical system: one cannot ask whether elements of a type are “equal on the nose”, in the classical sense.\footnote{There is a strict equality, called \emph{judgemental} or \emph{definitional}, but there is no \emph{type/proposition} in the system expressing it, just as with e.g.\ literal syntactic equality traditional foundations.}  The logical language only allows one to talk about properties and constructions which respect its equality.

The \emph{Univalence Axiom}, introduced by Voevodsky, strengthens this characteristic.  In classical foundations one has sets of sets, or classes of sets, and uses these to quantify over classes of structures.  Similarly, in type theory, types of types—\emph{universes}—are a key feature of the language.  The Univalence Axiom states that equality between types, as elements of a universe, is the same as equivalence between them, as types.  It formalises the practice of treating equivalent structures as completely interchangeable; it ensures that one can only talk about properties of types, or more general structures, that respect such equivalence.  In sum, it helps solidify the idea of types as some kind of spaces, in the homotopy-theoretic sense; and more practically—its original motivation—it provides for free many theorems (transfer along equivalences, naturality with respect to these, and so on) which must otherwise be re-proved by hand for each new construction.

The main goal of this paper is to justify the intuition outlined above, of types as spaces.
To this end, we focus on the Quillen model category $\sSets$ of simplicial sets, a well-studied model for topological spaces in homotopy theory; we construct a model of type theory in $\sSets$, and show that it satisfies the Univalence Axiom.
The fibrations of this model structure, called Kan fibrations, will serve as an interpretation of type dependency.
In particular the closed types will be interpreted as Kan complexes, which also serve as a model for $\infty$-groupoids, for instance in Joyal and Lurie's approach to higher category theory.

It follows from this model that Martin-Löf type theory plus the Univalence Axiom (presented in terms of contextual categories) is consistent, provided that the classical foundations we use are---precisely, ZFC together with the existence of two strongly inaccessible cardinals, or equivalently two Grothendieck universes.

As hinted above, there is one important technical caveat regarding our treatment of type theory: we state the model and consistency results in terms of contextual categories, not syntax, so as to avoid reliance on initiality results.

This paper therefore includes a mixture of logical and homotopy-theoretic ingredients; however, we have aimed to separate the two wherever possible.  Good background references for the logical parts include \cite{n-p-s:programming}, a general introduction to the type theory; \cite{hofmann:syntax-and-semantics}, for the categorical semantics; and \cite{martin-lof:bibliopolis}, the \emph{locus classicus} for the logical rules.  For the homotopy-theoretic aspects, \cite{goerss-jardine} and \cite{hovey:book} are both excellent and sufficient references.  Finally, for the category-theoretic language used throughout, \cite{mac-lane:cwm} is canonical.

\paragraph{Organisation} In Section~\ref{section:models-from-universes} we consider general techniques for constructing models of type theory. After setting out (in Section~\ref{subsec:the-type-theory}) the specific type theory that we will consider, we review (Section~\ref{subsec:contextual-cats}) some fundamental facts about its intended semantics in contextual categories, following \cite{streicher:book}.  In Section~\ref{subsec:contextualization}, we use universes to construct contextual categories, representing the structural core of type theory; and in Section~\ref{subsec:logical-structure-on-universes}, we use categorical constructions on the universe to model the logical constructions of type theory.  Together, these present a new solution to the \emph{coherence problem} for modelling type theory (cf.\ \cite{hofmann:on-the-interpretation}).

In Section~\ref{section:the-model}, we turn towards constructing a model in the category of simplicial sets. Sections~\ref{subsec:representability-of-fibs} and \ref{subsec:fibrancy-of-u} are dedicated to the construction and investigation of a (weakly) universal Kan fibration (a “universe of Kan complexes”); in Section~\ref{subsec:model-in-ssets} we use this universe to apply the techniques of Section~\ref{section:models-from-universes}, giving a model of the full type theory in simplicial sets.

Section~\ref{section:univalence} is devoted to the Univalence Axiom.   We formulate univalence first in type theory (Section~\ref{subsec:type-theoretic-univalence}), then directly in homotopy-theoretic terms (Section~\ref{subsec:simplicial-univalence}), and show that these definitions correspond under the simplicial model (Section~\ref{subsec:univalence-equivalence}).  In Section~\ref{subsec:univalence-of-uu}, we show that the universal Kan fibration is univalent, and hence that the Univalence Axiom holds in the simplicial model.  Finally, in Section~\ref{subsec:pullback-reps} we discuss an alternative formulation of univalence, shedding further light on the universal property of the universe.

We include also two appendices, setting out in full the type theory under consideration: first a conventional syntactic presentation in Appendix~\ref{app:type-theory}, and then in Appendix~\ref{app:cxl-structure} its translation into algebraic structure on contextual categories.

\paragraph*{History of the paper}
This paper started life as notes by the current authors based on Vladimir Voevodsky's lectures at the 2011 Oberwolfach workshop \cite{oberwolfach-report} along with his associated manuscript \cite{voevodsky:notes-on-type-systems}.
It was subsequently expanded with Voevodsky’s collaboration into the present full exposition of the simplicial model, and appeared as a preprint in 2012 with Voevodsky included as co-author.

In 2016, due to his dissatisfaction with the existing literature on type theory, which this paper took as background, Voevodsky asked us to remove him as co-author and delay publication until he had finished developing his own treatment of semantics of type theory (cf.~\cite{voevodsky:c-system-of-module} and sequels) and presentation of the simplicial model in that framework.

Tragically, Voevodsky passed away in September 2017, before completing that project.  This paper therefore remains the only account of Voevodsky's construction of the simplicial model, so with the support of Daniel Grayson, Voevodsky’s academic executor, we have prepared it again for publication.  We have made several changes to accommodate Voevodsky's reservations regarding the treatment of semantics; most importantly, we present the initiality of syntax as a conjecture rather than a theorem (Conjecture \ref{conj:initiality}), and give all main results in terms of contextual categories rather than syntax.  Otherwise, the paper remains substantially unchanged from the original 2012 version.

The main results of the paper are due to Voevodsky, including Theorems~\ref{thm:structure-on-U-to-CU}, \ref{thm:the-model-in-ssets}, \ref{thm:simplicial-model-univalent} and \ref{thm:univalent-characterization}. Mathematical contributions of Kapulkin and Lumsdaine include all of Section~\ref{subsec:univalence-equivalence}, along with streamlining various parts of the main constructions and completing portions omitted in \cite{voevodsky:notes-on-type-systems}.

Out of respect for Voevodsky’s stated wishes, and following discussion with his executor, he remains absent as an author of the final version of this paper. 
However, we wish to leave no doubt regarding the share of the credit that is his due.
We mourn the loss of an exceptional mathematician and mentor, and dedicate this paper to his memory.

\paragraph*{Related work}
While the present paper discusses just models of type theory with the univalence axiom, the major motivation for this is the actual development of mathematics within these foundations. Introductions to various aspects of this are given in \cite{grayson:intro-to-univalent-foundations}, \cite{voevodsky:experimental-library}, \cite{pelayo-warren:univalent-foundations-paper}, and \cite{hott:book},
while large computer-formalised developments include the UniMath\footnote{\url{https://www.github.com/UniMath/UniMath}} and HoTT\footnote{\url{https://www.github.com/HoTT/HoTT}} libraries, presented in \cite{voevodsky-ahrens-grayson:unimath} and \cite{bauer-et-al:hott-library}.

Earlier work on homotopy-theoretic models of type theory can be found in \cite{hofmann-streicher}, \cite{awodey-warren}, \cite{warren:strict-w-groupoid-model}.  Other current and recent work on such models includes \cite{garner-van-den-berg}, \cite{arndt-kapulkin}, and \cite{shulman:inverse-diagrams}.  Other general coherence theorems, for comparison with the results of Section~\ref{section:models-from-universes}, can be found in \cite{hofmann:on-the-interpretation} and \cite{lumsdaine-warren:local-universes}.  Univalence in homotopy-theoretic settings is also considered in %\cite{moerdijk:univalence} and 
\cite{gepner-kock:univalence}.  (These references are, of course, far from exhaustive.)

\paragraph*{Acknowledgements} 
First and foremost we would like to thank Vladimir Voevodsky: the creator of Univalent Foundations, a mentor to both the authors, and whose insight and ingenuity underlie not only the present paper but much subsequent work in the field.
We are particularly indebted also to Michael Warren, whose illuminating seminars and discussions heavily influenced our understanding and presentation of the simplicial model.
We also thank Daniel Grayson, Ieke Moerdijk, Mike Shulman, Raffael Stenzel, and Karol Szumi{\l}o, for helpful correspondence, conversations, and corrections to drafts and earlier versions, and Steve Awodey, for support and encouragement.

The first-named author was financially supported during this work by the NSF, Grant DMS-1001191 (P.I.~Steve Awodey), and by a grant from the Benter Foundation (P.I.~Thomas C.~Hales); the second-named author, by an AARMS postdoctoral fellowship at Dalhousie University, and grants from NSERC (P.I.’s~Peter Selinger, Robert Dawson, and Dorette Pronk).

%%% Local Variables:
%%% mode: latex
%%% TeX-master: "simplicial-model"
%%% End: 

\section{Models from Universes} \label{section:models-from-universes}

In this section, we set up the machinery which we will use, in later sections, to model type theory in simplicial sets.  The type theory we consider, and some of the technical machinery we use, are standard; the main original contribution is a new technique for solving the so-called coherence problem, using universes.

\subsection{The type theory under consideration} \label{subsec:the-type-theory}

Formally, the type theory we will consider is a slight variant of Martin-Löf’s Intensional Type Theory, as presented in e.g.\ \cite{martin-lof:bibliopolis}.  The rules of this theory are given in full in Appendix~\ref{app:type-theory}; briefly, it is a dependent type theory, taking as basic constructors $\synPi$-, $\synSigma$-, $\synId$-, and $\synW$-types, $\synZero$, $\synOne$, $+$, and one universe à la Tarski closed under these constructors.

A related theory of particular interest is the Calculus of Inductive Constructions, on which the Coq proof assistant is based (\cite{werner:thesis}).  CIC differs from Martin-Löf type theory most notably in its very general scheme for inductive definitions, and in its treatment of universes.  We do not pursue the question of how our model might be adapted to CIC, but for some discussion and comparison of the two systems, see  \cite{paulin-mohring:habilitation}, \cite{barras:habilitation}, and \cite[6.2]{voevodsky:notes-on-type-systems}.

One abuse of notation that we should mention: we will sometimes write e.g.\ $A(x)$ or $t(x,y)$ to indicate free variables on which a term or type may depend, so that we can later write $A(g(z))$ to denote the substitution $[g(z)/x]A$ more readably.  Note however that the variables explicitly shown need not actually appear; and there may also always be other free variables in the term, not explicitly displayed.

\subsection{Contextual categories} \label{subsec:contextual-cats}

Rather than working formally with the syntax of this type theory, we work instead in terms of \emph{contextual categories}, a class of algebraic objects abstracting the key structure given by the syntax.\footnote{Contextual categories are not the only option; the closely related notions of \emph{categories with attributes} \cite{cartmell:thesis,moggi:program-modules,pitts:categorial-logic}, \emph{categories with families} \cite{dybjer:internal-type-theory,hofmann:syntax-and-semantics}, and \emph{comprehension categories} \cite{jacobs:comprehension-categories} would all also serve our purposes.}  The plain definition of a contextual category corresponds to the structural core of the syntax; further syntactic rules (logical constructors, etc.)\ correspond to extra algebraic structure that contextual categories may carry.  Essentially, contextual categories are intended to provide a completely equivalent alternative to the syntactic presentation of type theory.

Why do we make this bait-and-switch?  The trouble with the syntax is that it is very tricky to handle rigorously. Any full presentation must account for (among other complications) variable binding, capture-free substitution, and the possibility of multiple derivations of a judgement; and so any careful construction of an interpretation must deal with all of these, at the same time as tackling the details of the particular model in question.  Contextual categories, by contrast, are a purely algebraic notion, with no such subtleties.  The idea is therefore that given any contextual category $\CC$ with structure corresponding to the logical rules of some syntactic type theory $\TT$, one should obtain an interpretation of the syntax of $\TT$ in $\CC$; and in proving this, one deals with the subtleties and bureaucracy of $\TT$ once and for all, giving a clear framework for subsequently constructing models of $\TT$.

Such an “initiality theorem” (cf.\ Conjecture~\ref{conj:initiality} below) has been proven for some specific rather small type theories, e.g.\ by Streicher in the Correctness Theorem of \cite[Ch.~III, p.~181]{streicher:book}.  For larger type theories such as the present one, however, its status is debatable, and at best unsatisfactory.  The traditional view is that for suitable type theories, a straightforward extension of Streicher’s and other standard methods suffices, and therefore the theorem can be regarded as established.  However, Voevodsky has argued persuasively that this is an unacceptably unrigorous attitude.  No precise definition has been given of what “suitable type theories” the methods apply to; nor (to our knowledge) has the proof been even sketched in detail for any type theory beyond small toy examples; and while a “straightforward” extension of standard methods may indeed suffice for a type theory such as the present one, that sufficiency is far from obvious.

For the present paper, therefore, we work formally entirely in terms of contextual categories, and avoid relying on initiality theorems in any form.

Conversely, then, why bring up syntax at all, other than as motivation?  The trouble on this side is that working with higher-order logical structure in contextual categories quickly becomes unreadable: compare, for instance, the statements of functional extensionality in Sections~\ref{subsec:optional-rules} and \ref{subsec:optional-rules-alg}.

We therefore make free use of the syntax of type theory, as a \emph{notation} for working in contextual categories.  The situation is rather comparable to that of string diagrams, as used in monoidal and more elaborately structured categories \cite{selinger:graphical-languages}, or indeed of the traditional notations for differentiation and integration.  In each case, one has a powerful, flexible, and intuitive notation, whose rigorous definition and validity requires quite non-trivial work to establish; but in lieu of such a general justification, one may still fruitfully make use of the notation, trusting the reader to translate it into the unproblematic algebraic form as required.

\begin{definition}[Cartmell {\cite[Sec.~2.2]{cartmell:thesis}}, Streicher {\cite[Def.~1.2]{streicher:book}}] \label{def:cxl-cat}
A \emph{contextual category} $\CC$ consists of the following data:
\begin{enumerate}
\item a category $\CC$;
\item a grading of objects as $\ob \CC = \coprod_{n:\N} \ob_n \CC$;
\item an object $\pt \in \ob_0 \CC$;
\item maps $\ft_n : \ob_{n+1} \CC \to \ob_n \CC$ (whose subscripts we usually suppress);
\item for each $X \in \ob_{n+1} \CC$, a map $p_X \colon X \to \ft X$ (the \emph{canonical projection} from $X$);
\item for each $X \in \ob_{n+1} \CC$ and $f \colon Y \to \ft(X)$, an object $f^*(X)$ together with a map $q(f,X) \colon f^*(X) \to X$;
\newcounter{tempcounter}
\setcounter{tempcounter}{\theenumi}
\end{enumerate}  
such that:
\begin{enumerate}
\setcounter{enumi}{\thetempcounter}
\item $\pt$ is the unique object in $\ob_0(\CC)$;
\item $\pt$ is a terminal object in $\CC$;
\item for each $n > 0$, $X \in \ob_n \CC$, and $f \colon Y \to \ft(X)$, we have $\ft(f^*X) = Y$, and the square
\[\xymatrix{
  f^* X \ar[d]_{p_{f^*X}} \ar[r]^{q(f,X)} \pb & X \ar[d]^{p_x} \\
  Y                   \ar[r]^f      & \ft (X)
}\]
is a pullback (the \emph{canonical pullback} of $X$ along $f$); and
\item these canonical pullbacks are strictly functorial: that is, for $X \in \ob_{n+1} \CC$, $1_{\ft X}^* X = X$ and $q(1_{\ft X},X) = 1_X$; and for $X \in \ob_{n+1} \CC$, $f \colon Y \to \ft X$ and $g \colon Z \to Y$, we have $(fg)^*(X) = g^*(f^*(X))$ and $q(fg,X) = q(f,X)q(g,f^*X)$.
\end{enumerate}

  Contextual cateories have also been studied under the name \emph{$C$-systems} (\cite{voevodsky:c-system-of-module} et seq.)
\end{definition}

\begin{remark}
  Note that these may be seen as models of a multi-sorted essentially algebraic theory (\cite[3.34]{adamek-rosicky}), with sorts indexed by $\N + \N \times \N$.
\end{remark}

This definition is best understood in terms of its prototypical example:

\begin{example}[Cartmell {\cite[p2.6]{cartmell:thesis}}; cf.\ also \cite{voevodsky:c-system-of-module}, \cite{voevodsky:subsystems-and-quotients}]
Let $\TT$ be the dependent type theory given by the structural rules of Section~\ref{subsec:structural-rules}, plus any selection of the subsequent logical rules.\footnote{Heuristically, $\TT$ may be “any type theory” here; but there is no established definition of what this means!}  Then there is a contextual category $\CC(\TT)$, described as follows:
\begin{itemize}
\item $\ob_n \CC(\TT)$ consists of the contexts $[ x_1 \oftype A_1,\ \ldots,\ x_n \oftype A_n ]$ of length $n$, up to definitional equality and renaming of free variables;
\item maps of $\CC(\TT)$ are \emph{context morphisms}, or \emph{substitutions}, considered up to definitional equality and renaming of free variables.  That is, a map 
\[f \colon [ x_1 \oftype A_1,\ \ldots,\ x_n \oftype A_n] \to [ y_1 \oftype B_1,\ \ldots,\ y_m \oftype B_m(y_1, \ldots, y_{m-1}) ] \]
is an equivalence class of sequences of terms $f_1, \ldots, f_m$ such that
\begin{equation*}
\begin{split}
  x_1 \oftype A_1,\ \ldots,\ x_n \oftype A_n & \types f_1 : B_1 \\
  & \vdots  \\
  x_1 \oftype A_1,\ \ldots,\ x_n \oftype A_n & \types f_m : B_m(f_1,\ \ldots,\ f_{m-1}),
\end{split}
\end{equation*}
 and two such maps $[f_i]$, $[g_i]$ are equal exactly if for each $i$,
\[  x_1 \oftype A_1,\ \ldots,\ x_n \oftype A_n \types f_i = g_i \colon B_i(f_1,\ \ldots\ f_{i-1}); \]
\item composition is given by substitution, and the identity $\Gamma \to \Gamma$ by the variables of $\Gamma$, considered as terms;
\item $\pt$ is the empty context $\emptycxt$;
\item $\ft [ x_1 \oftype A_1,\ \ldots,\ x_{n+1} \oftype A_{n+1}] = [ x_1 \oftype A_1,\ \ldots,\ x_n \oftype A_n]$;
\item for $\Gamma = [ x_1 \oftype A_1,\ \ldots,\ x_{n+1} \oftype A_{n+1}]$, the map $p_\Gamma \colon \Gamma \to \ft \Gamma$ is the \emph{dependent projection} context morphism 
\[ (x_1,\, \ldots,\, x_n) \colon [ x_1 \oftype A_1,\ \ldots,\ x_{n+1} \oftype A_{n+1}] \to [ x_1 \oftype A_1,\ \ldots,\ x_n \oftype A_n ], \]
simply forgetting the last variable of $\Gamma$;
\item for contexts
\[\Gamma = [ x_1 \oftype A_1,\ \ldots,\ x_{n+1} \oftype A_{n+1}(x_1,\ldots,x_n)],\] 
\[\Gamma' = [ y_1 \oftype B_1,\ \ldots,\ y_{m} \oftype B_{m}(y_1,\ldots,y_{m-1})],\]
and a map $f = [f_i(\vec y)]_{i \leq n} \colon \Gamma' \to \ft \Gamma$, the pullback $f^* \Gamma$ is the context
\[  [ y_1 \oftype B_1,\ \ldots,\ y_{m} \oftype B_{m}(y_1,\ldots,y_{m-1}),\ y_{m+1} \oftype A_{n+1}(f_1(\vec y),\ldots,f_n(\vec y))], \]
(for some fresh $y_{m+1}$) and $q(\Gamma,f) \colon f^*\Gamma \to \Gamma$ is the map
\[ [ f_1,\ \ldots, f_n,\ y_{m+1} ]. \]
\end{itemize}
\end{example}

Note that typed terms $\Gamma \types t : A$ of $\TT$ may be recovered from $\CC(\TT)$, up to definitional equality, as sections of the projection $p_{[\Gamma,\;x \oftype A]} \colon [\Gamma,\ x \oftype A] \to \Gamma$.  For this reason, when working with contextual categories, we will often write just “sections” to refer to sections of dependent projections.

We will also use several other notations deserving of particular comment.  For a fixed contextual category $\CC$ and an object $\Gamma \in \ob_n \CC$, we write $(\Gamma,A)$ for any object in $\ob_{n+1} \CC$ with $\ft(\Gamma,A) = \Gamma$, shall such object exist, and $p_A$ for the dependent projection $p_{(\Gamma,A)}$. Similarly, we write $(\Gamma,A,B)$ for an arbitrary object in $\ob_{n+2} \CC$ with $\ft(\Gamma, A, B) = (\Gamma, A)$, and so on.  

Given a morphism $f \colon \Delta \to \Gamma$ and an object $(\Gamma, A)$, we write $(\Delta, f^*A)$ for the canonical pullback $f^*(\Gamma, A)$ and similarly $(\Delta, f^*A, f^*B)$ for $f^*(\Gamma, A, B)$. We also extend the notation $f^*$ to apply not only to the canonical pullbacks of appropriate objects, but also the pullbacks of maps between them. \\

As mentioned above, Definition~\ref{def:cxl-cat} alone corresponds precisely to the basic judgements and structural rules of dependent type theory.  Similarly, each logical rule or type- or term-constructor should correspond to certain extra structure on a contextual category.   We state this intended correspondence precisely in Conjecture~\ref{conj:initiality} below, once we have set up the appropriate definitions.

\begin{definition}[cf.~{\cite[Sec.~4]{voevodsky:products-on-c-systems}}] \label{def:pi-type-structure}
 A \emph{$\synPi$-type structure} on a contextual category $\CC$ consists of:
 \begin{enumerate}
  \item for each $(\Gamma, A, B) \in \ob_{n+2} \CC$, an object $(\Gamma, \synPi(A, B)) \in \ob_{n+1} \CC$;
  \item for each such $(\Gamma, A, B)$ and section $b \colon (\Gamma, A) \to (\Gamma, A, B)$ (of the dependent projection $p_B)$, a section $\lambda(b) \colon \Gamma \to (\Gamma, \synPi(A, B))$ (of $p_{\synPi(A,B)}$);
  \item for each $(\Gamma, A, B)$ and pair of sections $k \colon \Gamma \to (\Gamma, \synPi (A, B))$ and $a \colon \Gamma \to (\Gamma, A)$, a section $\app(k,a) \colon \Gamma \to (\Gamma, A, B)$ such that the following diagram commutes:
      \[\xymatrix{ & (\Gamma, A, B) \ar[d]^{p_B} \\
       & (\Gamma, A) \ar[d]^{p_A} \\
       \Gamma \ar@/^/[ruu]^{\app(k,a)} \ar[ru]^{a} \ar@{=}[r] & \Gamma ;
      }\]
 \item such that for all such $(\Gamma, A, B)$, $a \colon \Gamma \to (\Gamma, A)$, and $b \colon (\Gamma, A) \to (\Gamma, A, B)$, we have $\app(\lambda(b),a) = b \cdot a$;
 \item and moreover such that all the above operations are stable under substitution: for any morphism $f \colon \Delta \to \Gamma$, and suitable $(\Gamma,A,B)$, $a$, $b$, $k$, we have
   \begin{gather*}
     (\Delta, f^*\synPi(A, B)) = (\Delta, \synPi(f^*A, f^*B)), \\
     \lambda({f^*b}) = f^*\lambda(b), \qquad \app(f^*k,f^*a) = f^*(\app(k,a)).
   \end{gather*}
 \end{enumerate}
\end{definition}

These are direct translations of the rules for $\synPi$-types given in Section~\ref{subsec:logical-rules}.%
\footnote{A partial exception is the stability axiom, which corresponds not to any explicitly given rule of the syntax, but rather to clauses for $\synPi$, $\lambda$, and $\app$ in the inductive \emph{definition} of substitution.}
Similarly, all the other logical rules of Appendix~\ref{app:type-theory} may be routinely translated into structure on a contextual category; see Appendix~\ref{app:cxl-structure} and \cite[3.3]{hofmann:syntax-and-semantics} for more details and discussion.

\begin{example} If $\TT$ is a type theory with $\synPi$-types, then $\CC(\TT)$ carries an evident $\synPi$-type structure; similarly for $\synSigma$-types and the other constructors of Sections~\ref{subsec:logical-rules} and \ref{subsec:universe-rules}. \qedhere
\end{example}

\begin{remark}
Note that all of these structures, like the definition of contextual categories themselves, are essentially algebraic in nature.
\end{remark}

\begin{definition}
A map $F \colon \CC \to \DD$  of contextual categories, or \emph{contextual functor}, consists of a functor $\CC \to \DD$ between underlying categories, respecting the gradings, and preserving (on the nose) all the structure of a contextual category.

Similarly, a map of contextual categories with $\synPi$-type structure, $\synSigma$-type structure, etc., is a contextual functor preserving the additional structure.
\end{definition}

\begin{remark}
These are exactly the maps given by considering contextual categories as essentially algebraic structures.
\end{remark}

We are now equipped to state precisely the sense in which the structures defined above are expected to correspond to the appropriate syntactic rules:

\begin{conjecture} \label{conj:initiality} % was: \label{thm:free-cxl-cat}
Let $\TT$ be the type theory given by the structural rules of Section~\ref{subsec:structural-rules}, plus any combination of the logical rules of Sections~\ref{subsec:logical-rules}, \ref{subsec:universe-rules}.  Then $\CC(\TT)$ is initial among contextual categories with the correspondingly-named extra structure.
\end{conjecture}

In other words, if $\CC$ is a contextual category with structure corresponding to the logical rules of a type theory $\TT$, then there should be a unique homomorphism $\CC(\TT) \to \CC$, interpreting the syntax of $\TT$ in $\CC$.  As discussed at the beginning of this section, the Correctness Theorem of \cite[Ch.~III, p.~181]{streicher:book} gives an analogous fact for a rather smaller type theory, while the status of the present conjecture is debated, accepted by some but not all in the field as a straightforward extension of that theorem.

Bearing this intended correspondence in mind, therefore, but avoiding relying on it, we take for the present paper the following definitions:

\begin{definition} \label{def:uf-and-models}
  By \emph{Martin-L\"of Type Theory plus the Univalence Axiom} ($\mathsf{MLTT}+\mathsf{UA}$ for short), we mean dependent type theory with $\synPi$-, $\synSigma$-, $\synId$-, $\synW$-, unit, zero, and sum types, along with one universe closed under all these type formers and satisfying the Univalence Axiom, as set out in Appendix~\ref{app:type-theory}.

  By a \emph{model} of $\mathsf{MLTT}+\mathsf{UA}$, or more generally of dependent type theory with any selection of the logical rules of Appendix~\ref{app:type-theory}, we mean a contextual category equipped with the corresponding structure from Appendix~\ref{app:cxl-structure}.  By the \emph{contextual-category presentation} of such a type theory, we mean the essentially algebraic theory of such structures.
\end{definition}

Note that by definition as an essentially algebraic theory, it is immediate that any such type theory has an initial model.

\begin{definition}
  A dependent type theory of the form considered in Definition~\ref{def:uf-and-models} and including the empty type $\synZero$ is \emph{inconsistent} just if in the initial model, the map $p_{\synZero_\pt} : (\pt,\synZero_\pt) \to \pt$ admits a section, and is \emph{consistent} if it is not inconsistent.
\end{definition}

Assuming initiality, this corresponds to the usual type-theoretic sense of inconsistency: a closed term inhabiting the empty type.
Readers who accept the initiality conjecture as true may therefore read Theorem~\ref{thm:simplicial-model-univalent} as providing an interpretation of the usual syntactic presentation of $\mathsf{MLTT}+\mathsf{UA}$, and Theorem~\ref{thm:uf-consistent} as its consistency in the usual type-theoretic sense.

\subsection{Contextual categories from universes} \label{subsec:contextualization}

The major difficulty in constructing models of type theories is the so-called \emph{coherence problem}: the requirement for pullback to be strictly functorial, and for the logical structure to commute strictly with it.  In most natural categorical situations, operations on objects commute with pullback only up to isomorphism, or even more weakly; and for constructors with weak universal properties, operations on maps (corresponding for example to the $\synId$-\elim\ rule) may also fail to commute with pullback.  Hofmann \cite{hofmann:on-the-interpretation} gives a construction which solves the issue for $\synPi$- and $\synSigma$-types, but $\synId$-types in particular remain problematic with this method.  Other methods exist for certain specific categories (\cite{hofmann-streicher}, \cite{warren:thesis}), but are not applicable to the present case.

In order to obtain coherence for our model, we thus use a construction based on \emph{universes} (not necessarily the same as universes in the type-theoretic sense, though the two may sometimes coincide), studied in more detail in \cite{voevodsky:c-systems-from-universes}. 

\begin{definition}[{\cite[Def.~2.1]{voevodsky:c-systems-from-universes}}]
Let $\CC$ be a category.  A \emph{universe} in $\CC$ is an object $U$ together with a morphism $p \colon \tilde{U} \to U$, and for each map $f \colon X \to U$ a choice of pullback square
\[ \xymatrix{(X;f) \ar[r]^{Q(f)}  \ar[d]_{P_{(X,f)}} \pb & \tilde{U} \ar[d]^p \\ X \ar[r]^f & U. } \]
\end{definition} 

The intuition here is that the map $p$ represents the generic family of types over the universe $U$.  
 
By abuse of notation, we often refer to the universe simply as $U$, with $p$ and the chosen pullbacks understood.  

Given a map $q \colon Y \to X$, we will often write $\name{q}$ (or $\name{Y}$, if $q$ is understood) for a map $X \to U$ such that $q \iso P_{(X,\name{q})}$ in $\CC/X$.  Also, for a sequence of maps $f_1 \colon X \to U$, $f_2 \colon (X;f_1) \to U$, etc., we write $(X;f_1,\ldots, f_n)$ for $((\ldots(X;f_1);\ldots); f_n)$.  (In particular, with $n=0$, $(X;\ ) = X$.)

\begin{definition}[{\cite[Constr.~2.12]{voevodsky:c-systems-from-universes}}] \label{def:contextualisation}
 Given a category $\CC$, together with a universe $U$ and a terminal object $\pt$, we define a contextual category $\CC_U$ as follows:
\begin{itemize}
\item $\ob_n \CC_U :=$ $\{\ (f_1, \ldots, f_n)  \in (\mathrm{Mor} \CC )^n \ |\ f_i \colon (\pt;f_1,\ldots,f_{i-1}) \to U\ (1 \leq i \leq n)\ \};$

\item $\CC_U((f_1,\ldots,f_n),(g_1,\ldots,g_m)) :=$ $\CC((\pt;f_1,\ldots,f_n),(\pt;g_1,\ldots,g_n));$

\item $\pt_{\CC_U} := ( )$, the empty sequence;

\item $\ft (f_1,\ldots,f_{n+1}) := (f_1,\ldots,f_{n})$;

\item the projection $p_{(f_1,\ldots,f_{n+1})}$ is the map $P_{(X,f_{n+1})}$ provided by the universe structure on $U$;

\item given $(f_1,\ldots,f_{n+1})$ and a map $\alpha \colon (g_1, \ldots, g_m) \to (f_1, \ldots, f_{n})$ in $\CC_U$, the canonical pullback $\alpha^*(f_1,\ldots,f_{n+1})$ in $\CC_U$ is given by $(g_1, \ldots, g_{m},$ $f_{n+1} \cdot \alpha)$, with projection induced by $Q(f_{n+1}\cdot\alpha)$:
\[ \xymatrix@C=1.5cm{
(1; g_1, \ldots, g_m, f_{n+1} \cdot \alpha) \ar@/^1.2em/[rr]^-{Q(f_{n+1}\cdot\alpha)} \ar[r] \ar[d] \pb & (1; f_1, \ldots, f_{n+1}) \ar[d] \ar[r]_-{Q(f_{n+1})} \pb & \tilde{U} \ar[d]^p \\
(1; g_1, \ldots, g_m) \ar[r]^\alpha & (1; f_1, \ldots, f_n) \ar[r]^-{f_{n+1}} & U
} \]
\end{itemize}
\end{definition}

\begin{proposition}[{\cite[Constr.~2.12, Ex.~4.9]{voevodsky:c-systems-from-universes}}]  \leavevmode
\begin{enumerate}
\item These data define a contextual category $\CC_U$.
\item This contextual category is well-defined up to canonical isomorphism given just $\CC$ and $p \colon \tilde{U} \to U$, independently of the choice of pullbacks and terminal object.
\end{enumerate}
\end{proposition}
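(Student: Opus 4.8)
The plan is to treat the two parts separately, in both cases reducing to elementary facts about pullbacks, the conceptual content being entirely the observation that a \emph{universe} supplies strictly functorial pullback choices, so that the contextual-category axioms — usually the locus of the coherence problem — hold on the nose. For part~(1) I would run down the clauses of Definition~\ref{def:cxl-cat} one at a time. That $\CC_U$ is a category with the stated grading, base object $(\,)$, and $\ft$-operation is immediate, since its hom-sets, composition and identities are inherited verbatim from $\CC$; moreover $(\,)$ is the unique object of $\ob_0\CC_U$, and it is terminal because $\CC_U((f_1,\dots,f_n),(\,)) = \CC((\pt;f_1,\dots,f_n),\pt)$ is a singleton. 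The only clause needing real thought is the canonical pullbacks. That the square displayed in Definition~\ref{def:contextualisation} is a pullback follows from the pasting lemma: its right-hand square is a pullback by the defining property of the universe on $U$, and its outer rectangle is exactly the chosen pullback of $p$ along $f_{n+1}\cdot\alpha$, so the left-hand square — the one exhibiting $\alpha^*(f_1,\dots,f_{n+1})$ — is a pullback, and $\ft$ of that object is the domain of $\alpha$ by construction. Strict functoriality is then automatic: since the universe assigns a \emph{chosen} pullback to each morphism of $\CC$, and $f_{n+1}\cdot 1_{\ft X} = f_{n+1}$ and $(f_{n+1}\cdot\alpha)\cdot\beta = f_{n+1}\cdot(\alpha\beta)$ hold literally in $\CC$, the objects $1_{\ft X}^*(f_1,\dots,f_{n+1})$ and $(\alpha\beta)^*(f_1,\dots,f_{n+1}) = \beta^*\alpha^*(f_1,\dots,f_{n+1})$ are literally the expected ones; the companion identities $q(1_{\ft X},X) = 1_X$ and $q(\alpha\beta,X) = q(\alpha,X)\,q(\beta,\alpha^*X)$ follow from the uniqueness half of the universal property of the pullback $(\pt;f_1,\dots,f_{n+1})$, since on each side both composites, with $Q(f_{n+1})$ and with the projection, agree.

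For part~(2), fix $\CC$ and $p\colon\tilde U\to U$, and suppose given two choices of pullbacks (and of terminal object, $\pt$ resp.\ $\pt'$), yielding contextual categories which I will write $\CC_U$ and $\CC_U'$, with primed context objects $(\pt';f_1',\dots,f_n')$ formed using the second set of pullbacks. I would construct an isomorphism of contextual categories $F\colon\CC_U\to\CC_U'$ by induction on the grading, building $F$ on objects together with a coherent family of isomorphisms $\phi_{(f_1,\dots,f_n)}\colon(\pt;f_1,\dots,f_n)\to(\pt';f_1',\dots,f_n')$ of $\CC$. At level $0$ put $F(\,):=(\,)$ and let $\phi_{(\,)}$ be the unique isomorphism $\pt\to\pt'$. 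Given $\phi_{(f_1,\dots,f_n)}$, set $f_{n+1}':=f_{n+1}\cdot\phi_{(f_1,\dots,f_n)}^{-1}$, let $F(f_1,\dots,f_{n+1}):=(f_1',\dots,f_{n+1}')$, and let $\phi_{(f_1,\dots,f_{n+1})}$ be the map into the pullback $(\pt';f_1',\dots,f_{n+1}')$ induced by $Q(f_{n+1})\colon(\pt;f_1,\dots,f_{n+1})\to\tilde U$ and $\phi_{(f_1,\dots,f_n)}\cdot p_{(f_1,\dots,f_{n+1})}$, which form a cone over the cospan defining that pullback precisely because $f_{n+1}' = f_{n+1}\cdot\phi_{(f_1,\dots,f_n)}^{-1}$; the symmetric construction with the two universes interchanged provides its inverse. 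On morphisms, $F$ acts by conjugation, sending $u\colon(f_1,\dots,f_n)\to(g_1,\dots,g_m)$ to $\phi_{(g_1,\dots,g_m)}\cdot u\cdot\phi_{(f_1,\dots,f_n)}^{-1}$.

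It then remains to verify that this $F$ is an isomorphism of contextual categories, which is routine. Functoriality and bijectivity on hom-sets are clear, being conjugation by isomorphisms; preservation of the grading, of the base object, and of $\ft$ is built in; preservation of canonical projections is precisely the defining relation $p'_{(f_1',\dots,f_{n+1}')}\cdot\phi_{(f_1,\dots,f_{n+1})} = \phi_{(f_1,\dots,f_n)}\cdot p_{(f_1,\dots,f_{n+1})}$ rearranged; and preservation of canonical pullbacks is checked by comparing the components of $F(\alpha^*X)$ with those of $F(\alpha)^*F(X)$, which agree after a one-line computation with the $\phi$'s, and then invoking uniqueness of induced maps for the equality of the $q$-morphisms. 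Finally, $F$ is canonical: each comparison isomorphism $\phi$ is uniquely pinned down by the universal properties used to define it, starting from the unique isomorphism of terminal objects, so $F$ is as well, and the comparisons arising from three different choices of data compose correctly. I expect the only real obstacle here to be organisational — running the simultaneous induction on objects and comparison isomorphisms cleanly, and keeping the bookkeeping tidy in the verification that $F$ preserves canonical pullbacks — with no genuine mathematical difficulty, but ample room for the notation to get out of hand.
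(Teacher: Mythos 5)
The paper's own proof of this proposition consists only of the phrase ``Routine computation,'' so there is no detailed argument to compare against; your proposal correctly fills in the elided details. Part (1) is sound: the pasting lemma gives that the left square in Definition~\ref{def:contextualisation} is a pullback, strict functoriality on objects holds literally because $f_{n+1}\cdot 1 = f_{n+1}$ and $(f_{n+1}\cdot\alpha)\cdot\beta = f_{n+1}\cdot(\alpha\beta)$ in $\CC$, and the identities for $q$ follow from uniqueness of maps into the chosen pullback. Part (2) is likewise sound: the inductive construction of the comparison isomorphisms $\phi_{(f_1,\dots,f_n)}$ (unique at level~$0$ by terminality, unique at level~$n+1$ by the universal property of the second universe's chosen pullback), with $F$ acting by conjugation on morphisms, gives a contextual isomorphism, and the canonicity and inverse are forced by the same uniqueness clauses. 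This is exactly what the paper's ``routine computation'' amounts to.
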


\begin{proof}
Routine computation.
\end{proof}

Justified by the second part of this proposition, we will not explicitly consider the choices of pullbacks and terminal object when we construct the universe in the category $\sSets$ of simplicial sets.

As an aside, let us note that every small contextual category arises in this way:

\begin{proposition}[{\cite[Constr.~5.2]{voevodsky:c-systems-from-universes}}]
Let $\CC$ be a small contextual category.  Consider the universe $U$ in the presheaf category $[\CC^\op,\Sets]$ given by
\begin{align*} U(X) &= \{ Y\ |\ \ft Y = X \} \\
\tilde{U}(X) & = \{ (Y,s)\ |\ \ft Y = X,\ s\ \textnormal{a section of}\ p_Y\}, \end{align*}
with the evident projection map, and any choice of pullbacks.

Then $[\CC^\op,\Sets]_U$ is isomorphic, as a contextual category, to $\CC$.
\end{proposition}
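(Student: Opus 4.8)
The plan is to show that the Yoneda embedding $y\colon\CC\to\widehat{\CC}$, where $\widehat{\CC}:=[\CC^\op,\Sets]$, induces the asserted isomorphism $\widehat{\CC}_U\iso\CC$. Two preliminary observations: since $\CC(X,\pt)$ is a singleton for every $X$, the presheaf $y(\pt)$ is terminal, so we may use it as the terminal object when forming $\widehat{\CC}_U$; and the presheaf $U$ has structure map $U(g)(Y)=g^*Y$ and $\tilde U$ has $\tilde U(g)(Y,s)=(g^*Y,g^*s)$, each well defined and functorial precisely by the strict functoriality of canonical pullbacks in $\CC$. The technical heart is the following \emph{key lemma}: \emph{if $X\in\ob\CC$ and $f\colon y(X)\to U$ classifies an object $Y$ with $\ft Y=X$ (i.e.\ $f$ corresponds under Yoneda to $Y\in U(X)$), then the designated pullback $(y(X);f)$ is isomorphic over $y(X)$ to $y(Y)$, in a way carrying $P_{(y(X),f)}$ to $y(p_Y)$ and $Q(f)$ to the map $y(Y)\to\tilde U$ classifying $(p_Y^*Y,\delta_Y)\in\tilde U(Y)$, where $\delta_Y\colon Y\to p_Y^*Y$ is the section with $q(p_Y,Y)\cdot\delta_Y=1_Y$.}

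To prove the key lemma I would compute the pullback pointwise (the designated pullback being canonically isomorphic to the pointwise one). Since $f_Z\colon\CC(Z,X)\to U(Z)$ sends $a$ to $a^*Y$, an element of $(y(X);f)(Z)$ is a pair $(a\colon Z\to X,\ s)$ with $s$ a section of $p_{a^*Y}$; and the assignments $h\mapsto(p_Y\cdot h,\ s_h)$, with $s_h$ the unique section of $p_{(p_Y h)^*Y}$ satisfying $q(p_Y h,Y)\cdot s_h=h$, and $(a,s)\mapsto q(a,Y)\cdot s$ are mutually inverse bijections $\CC(Z,Y)\leftrightarrow(y(X);f)(Z)$ natural in $Z$ (naturality uses the functoriality clause $q(ak,Y)=q(a,Y)\cdot q(k,a^*Y)$). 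This is just the standard correspondence, in a contextual category, between maps into an object and sections of pullbacks of its projection. Chasing the first projection and $Q(f)$ through the bijection, and evaluating $Q(f)$ at $h=1_Y$, yields the stated identifications.

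Granting the key lemma, I would construct by induction on $n$ bijections $\Phi_n\colon\ob_n\widehat{\CC}_U\to\ob_n\CC$ together with isomorphisms $\theta_{(f_1,\ldots,f_n)}\colon(y(\pt);f_1,\ldots,f_n)\xrightarrow{\ \sim\ }y(\Phi_n(f_1,\ldots,f_n))$ that are compatible with $\ft$, carry each projection $p_{(f_1,\ldots,f_{n+1})}$ to $y(p_{\Phi_{n+1}(f_1,\ldots,f_{n+1})})$, and carry each $Q(f_{n+1})$ to the classifying map described in the key lemma. In the inductive step, writing $\Gamma:=\Phi_n(f_1,\ldots,f_n)$, the composite $f_{n+1}\cdot\theta_{(f_1,\ldots,f_n)}^{-1}\colon y(\Gamma)\to U$ classifies a unique $Y_{n+1}$ with $\ft Y_{n+1}=\Gamma$; one sets $\Phi_{n+1}(f_1,\ldots,f_{n+1}):=Y_{n+1}$ and obtains $\theta_{(f_1,\ldots,f_{n+1})}$ from the key lemma applied to $\Gamma$ and $Y_{n+1}$, using that the designated pullback along $f_{n+1}$ is canonically isomorphic, via $\theta_{(f_1,\ldots,f_n)}$, to the one along $f_{n+1}\cdot\theta_{(f_1,\ldots,f_n)}^{-1}$. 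That each $\Phi_{n+1}$ is bijective follows because over a fixed $(f_1,\ldots,f_n)$ the map $f_{n+1}\mapsto Y_{n+1}$ is precomposition with $\theta_{(f_1,\ldots,f_n)}^{-1}$ followed by the Yoneda bijection $\widehat{\CC}(y(\Gamma),U)\iso U(\Gamma)=\{Y\mid\ft Y=\Gamma\}$, while $\ob_{n+1}$ on each side is the disjoint union of these fibres over $\ob_n$, on which $\Phi_n$ is already a bijection. One then defines $G\colon\widehat{\CC}_U\to\CC$ by $G:=\Phi_n$ on objects and, for $\phi\colon(f_1,\ldots,f_n)\to(g_1,\ldots,g_m)$, by $G(\phi):=y^{-1}(\theta_{(g_1,\ldots,g_m)}\cdot\phi\cdot\theta_{(f_1,\ldots,f_n)}^{-1})$, using full faithfulness of $y$. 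Then $G$ is a functor ($\theta$'s cancel), is bijective on objects and on every hom-set, hence an isomorphism of categories, and it preserves the grading, $\pt$, $\ft$ and the projections immediately. Compatibility with the canonical pullbacks reduces, by naturality of the Yoneda correspondence, to the identity $(f_{n+1}\cdot\theta_{(f_1,\ldots,f_n)}^{-1})\cdot y(G\alpha)=f_{n+1}\cdot\alpha\cdot\theta_{(g_1,\ldots,g_m)}^{-1}$, whose two sides classify $(G\alpha)^*(G(f_1,\ldots,f_{n+1}))$ and $G(\alpha^*(f_1,\ldots,f_{n+1}))$ respectively, together with a short diagram chase for the $q$-maps that uses the $Q$-map clause of the induction.

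I expect the main obstacle to be the key lemma and the bookkeeping it entails: one must match $(y(X);\name{Y})$ not merely with $y(Y)$, but its projection with $y(p_Y)$ and the map $Q(f)$ with the correct element of $\tilde U(Y)$, and then carry all of this — especially that $\tilde U$-clause, which the canonical-pullback axiom genuinely needs (two pullback squares over a common cospan sharing an apex and one leg may still differ in the other) — through the induction. None of this is deep, but it is exactly where a careless argument would conflate an arbitrary pullback with the designated canonical one.
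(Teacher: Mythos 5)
Your proposal is correct, and it is precisely the Yoneda-based argument the paper alludes to with its one-line proof (``Straightforward, with liberal use of the Yoneda lemma''): you have filled in the details carefully, and in particular you are right to single out the $\tilde U$-clause of the key lemma (the identification of $Q(f)$ with the map classifying $(p_Y^*Y,\delta_Y)$) as the point that cannot be elided, since it is exactly what pins down the $q$-maps and makes the constructed functor a contextual isomorphism rather than merely an equivalence of underlying categories.
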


\begin{proof}
Straightforward, with liberal use of the Yoneda lemma.
\end{proof}

\subsection{Logical structure on universes} \label{subsec:logical-structure-on-universes}

Given a universe $U$ in a category $\CC$, we want to know how to equip $\CC_U$ with various logical structure---$\synPi$-types, $\synSigma$-types, and so on.  For general $\CC$, this is rather fiddly; but when $\CC$ is locally cartesian closed (as in our case of interest), it is more straightforward, since local cartesian closedness allows us to construct and manipulate “objects of $U$-contexts”, and hence to construct objects representing the premises of each rule.

In working with locally cartesian closed categories (LCCC’s), we will follow topos-theoretic convention and write $\toposSigma_f$ and $\toposPi_f$ respectively for the left and right adjoints to the pullback functor $f^*$ along a map $f \colon A \to B$: 
\[ \xymatrix{
\CC / A \ar@/^1.5em/[rr]^{\toposSigma_f} \ar@/_1.5em/[rr]_{\toposPi_f} \ar@{{}{ }{}}@/^0.9em/[rr]|{\bot}  \ar@{{}{ }{}}@/_0.9em/[rr]|{\bot} & & \CC/B \ar[ll]|{f^*}
} \]
Also, the intended map $A \to B$ is often clearly determined by the objects $A$ and $B$, as some sort of associated projection; in such a case, we will write $\toposSigma_{A\shortto B}$, $\toposPi_{A\shortto B}$ for the functors arising from this map.

An alternative notation for locally cartesian closed categories is their internal logic, \emph{extensional} dependent type theory \cite{seely:lcccs}, \cite{hofmann:on-the-interpretation}.  While this language is convenient and powerful, we avoid it due to the difficulties of working clearly with two logical languages in parallel.  \\

Returning to the question at hand, first consider $\synPi$-types.\footnote{The following construction is studied in considerable detail in \cite{voevodsky:products-from-universes}.}  We know that dependent products exist in $\CC$; so informally, we need only to ensure that $U$ (considered as a universe of types) is closed under such products.  Specifically, given a type $A$ in $U$ over some base $X$ (that is, a map $\name{A} \colon X \to U$), and a dependent family of types $B$ over $A$, again in $U$ (i.e.\ a map $\name{B} \colon A := (X;\name{A}) \to U$), the product $\toposPi_{A \shortto X} B$ of this family in the slice $\CC/X$ should again “live in $U$”; that is, there should be a map $\name{\Pi(A,B)} \colon X \to U$ such that $(X;\name{\Pi(A,B)}) \iso \toposPi_{A \shortto X} B$.  Moreover, we need this construction to be strictly natural in $X$.

Due to the strict naturality requirement, we cannot simply provide this structure for each $X$ and $A, B$ individually.  Instead, we construct an object $U^{\synPi}$ representing such pairs $(A,B)$, and a generic such pair $(A_\gen, B_\gen)$ based on $U^{\synPi}$.  It is sufficient to define $\PiStrux$ in this generic case $X = U^{\synPi}$; the construction then extends to other $X$ by precomposition, and as such, is automatically strictly natural in $X$.  

Precisely:

\begin{definition}
Given a universe $U$ in an lccc $\CC$, define
\[ U^{\synPi} := \toposSigma_{\Utildestrut U\shortto 1} \toposPi_{\tilde{U} \shortto U} (\pi_2 \colon U \times \tilde{U} \shortto \tilde{U}). \]

(This definition can be expressed in several ways, according to one’s preferred notation. In the internal language of $\CC$ as an LCCC, it can be written as $\interp{ A \oftype U,\, B \oftype [\tilde{U}_A,U] }$, showing it more explicitly as an internalisation of the premises of the $\synPi$-\form\ rule.  Using a more traditional internal-hom notation, it could alternatively be written as $\intHom_U(\tilde{U}, U \times U)$.)

Pulling back $\tilde{U}$ along the projection $U^{\synPi} \to U$ induces an object $A_\gen = \tilde{U} \times_U U^{\synPi}$, along with a projection map $\alpha_\gen \colon A_\gen \to U^{\synPi}$.  Similarly, pulling back $\tilde{U}$ along the counit (the evaluation map of the internal hom)
\[A_\gen = \tilde{U} \times_U \toposPi_{\tilde{U} \shortto U} (U \times \tilde{U}) \to U \times \tilde{U} \to U\]
induces an object $(B_\gen,\beta_\gen)$ over $A_\gen$:
\[ \begin{tikzpicture}[hole/.style={fill=white,inner sep=1pt},x=1.4cm,y=1.4cm]
\node (UPi) at (0,0) {$U^{\synPi}$};
\node (U) at (1,0) {$U$};
\node (Ut) at (1,1) {$\tilde{U}$};
\node (Agen) at (0,1) {$A_\gen$};
\node (U2) at (1.6,1) {$U$};
\node (Ut2) at (1.6,2) {$\tilde{U}$};
\node (Bgen) at (0,2) {$B_\gen$};
\draw[->,font=\scriptsize] (UPi) to (U);
\draw[->,font=\scriptsize] (Agen) to (UPi);
\draw[->,font=\scriptsize] (Ut) to (U);
\draw[->,font=\scriptsize] (Agen) to (Ut);
\draw[->,font=\scriptsize] (Agen) to [out=30,in=135] (U2);
\draw[->,font=\scriptsize] (Bgen) to (Agen);
\draw[->,font=\scriptsize] (Ut2) to (U2);
\draw[->,font=\scriptsize] (Bgen) to (Ut2);
\draw (0.15,0.65) -- (0.35,0.65) -- (0.35,0.85);
\draw (0.15,1.65) -- (0.35,1.65) -- (0.35,1.85);
\end{tikzpicture} \]

Moreover, the universal properties of the LCCC structure ensure that for any sequence $B \to A \to \Gamma$ with maps $\Gamma \to U$, $A \to U$, $A \to \tilde{U}$, $B \to \tilde{U}$ exhibiting $A \to \Gamma$ and $B \to A$ as pullbacks of $\tilde{U} \to U$, there is a unique map $\name{(A,B)} \colon \Gamma \to U^{\synPi}$ which induces the given sequence via precomposition and pullback:
\[ \begin{tikzpicture}[hole/.style={fill=white,inner sep=1pt},x=1.4cm,y=1.4cm]
\node (Gamma) at (-1.3,0) {$\Gamma$};
\node (A) at (-1.3,1) {$A$};
\node (B) at (-1.3,2) {$B$};
\node (UPi) at (0,0) {$U^{\synPi}$};
\node (U) at (1,0) {$U$};
\node (Ut) at (1,1) {$\tilde{U}$};
\node (Agen) at (0,1) {$A_\gen$};
\node (U2) at (1.6,1) {$U$};
\node (Ut2) at (1.6,2) {$\tilde{U}$};
\node (Bgen) at (0,2) {$B_\gen$};
\draw[->,font=\scriptsize,auto] (Gamma) to node {$\name{(A,B)}$} (UPi);
\draw[->,font=\scriptsize] (A) to (Gamma);
\draw[->,font=\scriptsize] (A) to (Agen);
\draw[->,font=\scriptsize] (B) to (A);
\draw[->,font=\scriptsize] (B) to (Bgen);
\draw[->,font=\scriptsize] (UPi) to (U);
\draw[->,font=\scriptsize] (Agen) to (UPi);
\draw[->,font=\scriptsize] (Ut) to (U);
\draw[->,font=\scriptsize] (Agen) to (Ut);
\draw[->,font=\scriptsize] (Agen) to [out=30,in=135] (U2);
\draw[->,font=\scriptsize] (Bgen) to (Agen);
\draw[->,font=\scriptsize] (Ut2) to (U2);
\draw[->,font=\scriptsize] (Bgen) to (Ut2);
\draw (-1.15,0.65) -- (-0.95,0.65) -- (-0.95,0.85);
\draw (-1.15,1.65) -- (-0.95,1.65) -- (-0.95,1.85);
\draw (0.15,0.65) -- (0.35,0.65) -- (0.35,0.85);
\draw (0.15,1.65) -- (0.35,1.65) -- (0.35,1.85);
\end{tikzpicture} \]
\end{definition}
So $B_\gen \to A_\gen \to U^{\synPi}$ is \emph{generic} among such sequences, and $U^{\synPi}$ \emph{represents} the inputs for a $\synPi$ operation (that is, the premises of the $\synPi\text{-}\form$ rule) on $\CC_U$.

\begin{definition}[cf.~{\cite[Def.~4.1]{voevodsky:products-from-universes}}]
A \emph{$\PiStrux$-structure} on a universe $U$ in a lccc $\CC$ consists of a map
\[ \PiStrux \colon U^{\synPi} \to U. \]
 whose realisation is a dependent product for the generic dependent family of types; that is, it is equipped with an isomorphism $ \PiStrux^* \tilde{U} \iso \toposPi_{\alpha_\gen} B_\gen$ over $U^\synPi$, or equivalently with a map $ \tilde{\PiStrux} \colon \toposSigma_{U^{\synPi} \shortto 1} \toposPi_{\USigstrut \alpha_\gen} B_\gen \to \tilde{U}$ making the square
\[\xymatrix{ 
  \toposSigma_{U^{\synPi} \shortto 1} \toposPi_{\USigstrut \alpha_\gen} B_\gen \ar[r]^-{\tilde{\PiStrux}} \ar[d] & \tilde{U} \ar[d] \\
  U^{\synPi} \ar^-\PiStrux[r] & U   
}\]
a pullback.
\end{definition}

The approach used here gives a template which we follow for all the other constructors, with extra subtleties entering the picture just in the cases of $\synId$-types and (type-theoretic) universes, since these structures are not characterised by strict category-theoretic universal properties.
 
\begin{definition}
Take $U^{\synSigma}$ to be the object representing the premises of the $\synSigma$-\form\ rule:
\[ U^{\synSigma} := \toposSigma_{\Utildestrut U \shortto 1} \toposPi_{\tilde{U} \shortto U} (U \times \tilde{U}) \]
Since these are the same as the premises of the $\synPi$-\form\ rule, we have in this case that $U^{\synSigma} = U^{\synPi}$; and we have again the generic family of types $B_\gen \to A_\gen \to U^{\synSigma}$, as over $U^{\synPi}$.
\end{definition}

\begin{definition}
A \emph{$\SigmaStrux$-structure} on a universe $U$ in a lccc $\CC$ consists of a map
\[ \SigmaStrux \colon U^{\synSigma} \to U \]
whose realisation is a dependent sum for the generic dependent family of types; that is, it is equipped with an isomorphism $ \SigmaStrux^* \tilde{U} \iso \toposSigma_{\alpha_\gen} B_\gen$ over $U^\synSigma$ (or again equivalently with a map $\tilde{\Sigma} \colon \toposSigma_{U^{\synSigma} \shortto 1} \toposSigma_{\USigstrut \alpha_\gen} B_\gen \to \tilde{U}$ making the appropriate square a pullback).
\end{definition}

$\IdStrux$-structure requires a few auxiliary definitions.\footnote{We should thank here Michael Warren and Steve Awodey, who both strongly influenced the current presentation of the definition.} Recall first the classical notion of weak orthogonality of maps:
\begin{definition} \leavevmode 
For maps $i \colon A \to B$, $f \colon Y \to X$ in a category $\CC$, say $i$ is \emph{(weakly) orthogonal} to $f$ if any commutative square from $i$ to $f$ has some diagonal filler:
\[\xymatrix{ A \ar[r] \ar[d]_i &Y \ar[d]^f \\
              B \ar[r] \ar@{.>}[ur] & X }\]
or, in other words, if the function
\begin{align*}
\Hom (B,Y) & \to \Hom (A,Y) \times_{\Hom(A,X)} \Hom (B,X) \\
g & \mapsto (g \cdot i, f \cdot g)
\end{align*}
has a section.

Say $i$ is moreover \emph{stably orthogonal}  to $f$ if for every object $C$ of $\CC$, $C \times i$ is orthogonal to $f$.
\end{definition}

In a cartesian closed category, this notion has an internal analogue:
\begin{definition}
Given maps $i \colon A \to B$, $f \colon Y \to X$ in a cartesian closed category $\CC$, an \emph{internal lifting operation} for $i$ against $f$ is a section of the evident map
$Y^B \to Y^A \times_{X^A} X^B$.  % In case a referee asks for more details: the map is (\lambda g.\, (g \cdot i, f \cdot g))
\end{definition}

The following proposition connects the classical and internal notions:
\begin{proposition} \label{prop:lifting-op-iff-stably-orthog}
Given $i,f$ as above, there exists an internal lifting operation for $i$ against $f$ if and only if $i$ is stably orthogonal to $f$.
\end{proposition}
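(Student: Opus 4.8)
The plan is to apply the functor $\Hom(C,-)$ to the map appearing in the definition of a lifting operation and to recognise the result, via the exponential adjunction, as exactly the comparison function whose splitting expresses orthogonality of $C \times i$ to $f$; the equivalence then drops out by choosing $C$ suitably.

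Write $e \colon Y^B \to Y^A \times_{X^A} X^B$ for the map of the definition, assembled from the precomposition map $Y^B \to Y^A$ induced by $i$ and the postcomposition map $Y^B \to X^B$ induced by $f$. First I would fix an arbitrary object $C$ and compute $\Hom(C,e)$. Since $\Hom(C,-)$ preserves pullbacks, its value on the codomain of $e$ is $\Hom(C,Y^A) \times_{\Hom(C,X^A)} \Hom(C,X^B)$; the exponential adjunction, natural in $C$, rewrites this as $\Hom(C \times A, Y) \times_{\Hom(C \times A, X)} \Hom(C \times B, X)$, and rewrites $\Hom(C,Y^B)$ as $\Hom(C \times B, Y)$. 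Chasing the adjunction isomorphisms, $\Hom(C,e)$ is identified with
\[ \Hom(C \times B, Y) \longrightarrow \Hom(C \times A, Y) \times_{\Hom(C \times A, X)} \Hom(C \times B, X), \qquad g \mapsto (g \cdot (C \times i),\ f \cdot g), \]
which is precisely the function appearing in the definition of orthogonality, now for the pair $C \times i$, $f$. Hence $\Hom(C,e)$ admits a section if and only if $C \times i$ is orthogonal to $f$.

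For the forward implication I would start from a lifting operation, i.e.\ a section $s$ of $e$; then $\Hom(C,s)$ is a section of $\Hom(C,e)$ for every $C$, so by the identification above $C \times i$ is orthogonal to $f$ for every $C$, i.e.\ $i$ is stably orthogonal to $f$. For the converse I would run the standard Yoneda-style step: take $C := Y^A \times_{X^A} X^B$, the codomain of $e$ itself. Stable orthogonality supplies a section $r$ of $\Hom(C,e) \colon \Hom(C, Y^B) \to \Hom(C, C)$, and then $s := r(1_C) \colon C \to Y^B$ satisfies $e \cdot s = 1_C$; thus $s$ is a section of $e$, a lifting operation for $i$ against $f$.

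The only delicate point is the middle step — checking that the exponential-adjunction isomorphisms match $\Hom(C,e)$ with the orthogonality comparison function on the nose, fibre products included; this is a routine if slightly fiddly diagram chase relying on naturality of the adjunction in $C$. Beyond that there is no real obstacle: the content is just that exponentials internalise hom-sets and that representable functors preserve the relevant limits.
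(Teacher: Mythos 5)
Your proof is correct and is essentially the paper's argument, recast in Yoneda-style language: your identification of $\Hom(C,e)$ with the orthogonality comparison function for $C\times i$ against $f$ is exactly the exponential-transpose step the paper performs with explicit diagrams, your evaluation at $1_C$ recovers the paper's canonical square, and applying $\Hom(C,s)$ for a section $s$ of $e$ is the paper's "compose with the lifting operation and transpose."
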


\begin{proof}
If $i$ is stably orthogonal to $f$, then an internal lifting operation may be obtained as (the exponential transpose of) a filler for the canonical square
\[ \xymatrix@C=2cm{ A \times (Y^A \times_{X^A} X^B) \ar[r]^-{\ev_{A,Y} \cdot (A \times \pi_1)} \ar[d]_{i \times Y^A \times_{X^A} X^B} & Y \ar[d]^f \\
              B \times (Y^A \times_{X^A} X^B) \ar[r]^-{\ev_{B,X} \cdot (B \times \pi_2)} & X. }\]
Conversely, any square from $C \times i$ to $f$ induces a map $C \to Y^A \times_{X^A} X^B$; composing this with an internal lifting operation provides a map $C \to Y^B$, whose transpose is a filler for the square. 
\end{proof}

As shown in \cite{awodey-warren} and \cite{gambino-garner}, the rules for $\synId$-types can be understood roughly as follows.  In a model where dependent types are interpreted as fibrations, the identity type over a type $A$ (in any slice $\CC/\Gamma$) is a factorisation of the diagonal $\Delta_A \colon A \to A \times_\Gamma A$ as a stable trivial cofibration, followed by a fibration.  (Here, by a stable trivial cofibration, we mean a map which is stably orthogonal to fibrations, in $\CC/\Gamma$.)  Additionally, choices of all data (including liftings) must be given which commute with pullbacks in the base $\Gamma$.

In our case, the “fibrations” are just the pullbacks of $p$; so it suffices to consider orthogonality between the first map of the factorisation and $p$ itself.  Moreover, as for $\Pi$- and $\Sigma$-structure above, we demand the structure just in the universal case where $A$ is $\tilde{U}$, in the slice $\CC/U$.  Finally, an internal lifting operation turns out to be exactly the structure required to give chosen lifts commuting with pullbacks. We therefore define:

\begin{definition}
An \emph{$\IdStrux$-structure} on a universe consists of maps 
\[ \IdStrux \colon U^{\synId} := \tilde{U} \times_U \tilde{U} \to U,  \qquad \qquad r \colon \tilde{U} \to \IdStrux^* \tilde{U} \]
such that the triangle
\[  \xymatrix@C=0.1cm{ \tilde{U} \ar[rr]^r \ar[dr]_{\Delta_{\tilde{U}}} & & \IdStrux^* \tilde{U} \ar[dl]^{\IdStrux^* p} \\
             & \tilde{U} \times_U \tilde{U} & } \]
commutes, together with an internal lifting operation $J$ for $r$ against $p \times U$ in $\CC/U$.
\end{definition}

\begin{remark}By virtue of Proposition~\ref{prop:lifting-op-iff-stably-orthog}, we could instead simply stipulate that $r$ be stably orthogonal to $p \times U$. We choose the current version since it provides exactly the structure required for Theorem~\ref{thm:structure-on-U-to-CU}, without requiring any arbitrary choices. 

Another alternative is described in \cite[Sec.\ 2.3]{voevodsky:identity-types-from-universes}.
\end{remark}

\begin{definition}
A \emph{$\WStrux$-structure} on a universe consists of a map 
\[ \WStrux \colon U^{\synW} := \toposSigma_{\Utildestrut U \shortto 1} \toposPi_{\tilde{U} \shortto U} (U \times \tilde{U}) \to U \]
such that $\WStrux^* \tilde{U}$ is an initial algebra for the polynomial endofunctor of $\CC/{U^{\synW}}$ specified by $\beta_\gen \colon B_\gen \to A_\gen$, i.e.\ the endofunctor 
\[ \xymatrix{
\CC/U^{\synW} \ar[rr]^-{\beta_\gen^* \alpha_\gen^*} & & \CC/B_\gen \ar[r]^-{\toposPi_{\beta_\gen}} & \CC/A_\gen \ar[r]^-{\toposSigma_{\alpha_\gen}} & \CC/U^{\synW}
}. \]
(For details on polynomial endofunctors in logical settings, see \cite{moerdijk-palmgren}, \cite{gambino-hyland}.  Intuitively, their initial algebras may be seen as types of well-founded trees, or of syntax over algebraic signatures.)
\end{definition}

\begin{definition}
A \emph{$\zeroStrux$-structure} on $U$ consists of a map $\zeroStrux \colon 1 \to U$ such that $\zeroStrux^*\tilde{U} \iso 0$.

(By analogy with the preceding definitions, one might write $\zeroStrux \colon U^{\synZero}  \to U$ instead and similarly in the next two definitions.  However, since $U^{\synZero}$ is a terminal object, we choose not to do so simply for the sake of readability.)
\end{definition}

\begin{definition}
A \emph{$\oneStrux$-structure} on $U$ consists of a map $\oneStrux \colon 1 \to U$ such that $\oneStrux^* \tilde{U} \iso 1$.
\end{definition}

\begin{definition}
A \emph{$\plusStrux$-structure} on $U$ consists of a map $\plusStrux \colon U \times U \to U$, together with an isomorphism $\plusStrux^* \tilde{U} \iso \pi_1^* \tilde{U} + \pi_2^* \tilde{U}$ in $\CC/(U \times U)$.
\end{definition}

Finally, we consider the structure on $U$ needed to give a universe (in the type-theoretic sense) in $\CC_U$.  Here, for the first time, we need to consider a nested pair of universes, since the internal universe of $\CC_U$ must be some smaller universe $U_0$ in $\CC$.

\begin{definition}
An \emph{internal universe} $(U_0,i)$ in $U$ consists of arrows 
\[ u_0 \colon \pt \to U \qquad \qquad i \colon U_0 := u_0^* \tilde{U} \to U. \]

Given these, $i$ induces by pullback a universe structure $(p_0,\tilde{U}_0,\ldots)$ on $U_0$.  We say that $U_0$ is closed under $\synPi$-types in $U$ if $U_0$ carries a $\Pi$-structure $\PiStrux_0$, commuting with $i$ in the sense that the square
\[ \xymatrix{ U_0^{\synPi} \ar[r]^{i^{\synPi}} \ar[d]_{\PiStrux_0} & U^{\synPi} \ar[d]^\PiStrux \\
              U_0 \ar[r]^i & U } \]
commutes (where the top map is induced by the evident functoriality of $U^{\synPi}$ in $U$).

Similarly, we say that $U_0$ is closed under $\synSigma$-types (resp.\ $\synId$-types, etc.) if it carries a $\SigmaStrux$-structure $\SigmaStrux_0$ (resp.\ an $\IdStrux$-structure $(\IdStrux_0,r_0)$, etc.) commuting with $i$.
\end{definition}

With these structures defined, we can now prove that they are fit for purpose:

\begin{theorem}[cf.~{\cite[Constr.~4.3]{voevodsky:products-from-universes}}, {\cite[Sec.\ 2.4]{voevodsky:identity-types-from-universes}}] \label{thm:structure-on-U-to-CU}
A $\Pi$-structure (resp.\ $\Sigma$-structure, etc.)\ on a universe $U$ induces $\synPi$-type structure (resp.\ $\synSigma$-type structure, etc.)\ on $\CC_U$.

Moreover, an internal universe $(U_0,i)$ in $U$ closed under any combination of $\synPi$\nbhyph types, $\synSigma$\nbhyph types, etc., induces a universe à la Tarski in $\CC_U$ closed under the corresponding constructors.
\end{theorem}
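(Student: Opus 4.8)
The plan is to exploit the defining feature of $\CC_U$: every object and morphism is assembled from the single map $p \colon \tilde{U} \to U$ by precomposition and by chosen pullbacks, so any piece of structure supplied once and for all over $U$ transports to $\CC_U$ and is there automatically \emph{strictly} stable under substitution. Concretely, for each constructor the recipe is the same: (i) recognise that an object $(\Gamma, A)$, $(\Gamma, A, B)$, \ldots\ of $\CC_U$ presents, over the base $X_\Gamma := (\pt; f_1, \ldots, f_n)$, exactly a sequence of iterated pullbacks of $p$, hence is named by a \emph{unique} classifying map $\name{(A, B)} \colon X_\Gamma \to U^{(-)}$ into the representing object built in Section~\ref{subsec:logical-structure-on-universes}; (ii) postcompose with the structure map ($\PiStrux$, $\SigmaStrux$, $\IdStrux$, \ldots) to obtain the new type; (iii) read off the term-constructors and computation rules from the accompanying isomorphism/pullback (resp.\ lifting operation, resp.\ initial-algebra property); (iv) observe that stability is free, because classifying maps compose on the nose --- $\name{(A,B)} \circ \alpha = \name{(\alpha^* A, \alpha^* B)}$, both being the unique classifying map for the same pulled-back sequence --- while the structure maps are fixed.

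For $\synPi$: given $(\Gamma, A, B)$, set $\synPi(A, B) := \PiStrux \circ \name{(A, B)} \colon X_\Gamma \to U$. The pullback square in the definition of $\PiStrux$-structure, together with the Beck--Chevalley isomorphism $\name{(A,B)}^* \toposPi_{\alpha_\gen} B_\gen \iso \toposPi_{p_A} (X_\Gamma; A, B)$, identifies $(X_\Gamma; \synPi(A,B))$ (up to canonical iso) with the dependent product of $B$ over $A$ in $\CC/X_\Gamma$; $\lambda$ and $\app$ are then the two directions of the adjunction bijection between sections of $p_B$ and sections of $p_{\synPi(A,B)}$ (that is, $p_A^* \dashv \toposPi_{p_A}$ evaluated at the terminal object of $\CC/X_\Gamma$), the computation rule $\app(\lambda(b),a) = b \cdot a$ is the triangle identity of that adjunction followed by precomposition with $a$, and stability follows from naturality of the adjunction plus~(iv). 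The case of $\synSigma$ is parallel and easier, since $(X_\Gamma; \SigmaStrux \circ \name{(A,B)})$ is canonically the total space $(X_\Gamma; A, B)$ itself, so $\pair$ is essentially that identification and $\synsplit$ transports a section of a family along it. For $\synW$ one takes $\WStrux \circ \name{(A,B)}$ and uses that initial algebras of polynomial endofunctors are stable under pullback in the base (\cite{moerdijk-palmgren}, \cite{gambino-hyland}) to produce $\wrec$, its computation rule, and stability. The structures $\zeroStrux$, $\oneStrux$, $\plusStrux$ are immediate, via $(X_\Gamma; \zeroStrux \circ {!}) \iso 0$, $(X_\Gamma; \oneStrux \circ {!}) \iso X_\Gamma$, and $(X_\Gamma; a + b) \iso (X_\Gamma; a) + (X_\Gamma; b)$.

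The delicate case is $\synId$, and this is where I expect the real work. Set $\synId_A := \IdStrux \circ \langle A \rangle$, where $\langle A \rangle \colon (X_\Gamma; A, p_A^* A) = (X_\Gamma; A) \times_{X_\Gamma} (X_\Gamma; A) \to \tilde{U} \times_U \tilde{U} = U^{\synId}$ is induced by the two projections and the naming map $(X_\Gamma; A) \to \tilde{U}$; the reflexivity section is obtained by pulling $r$ back along $(X_\Gamma; A) \to \tilde{U}$, the commuting triangle in the definition of $\IdStrux$-structure guaranteeing that $\refl_A$ lies over the diagonal. For the eliminator, one must show that a type $C$ over $(\Gamma, A, p_A^* A, \synId_A)$ together with a section $d$ along $\refl_A$ amounts, after pulling back to the generic situation over $U$, to a commutative square from $r$ to $p \times U$ in $\CC/U$ --- the ``$\,\times U\,$'' being exactly what accommodates the extra context variable ranging over $\synId_A$ --- so that the lifting operation $J$, which by Proposition~\ref{prop:lifting-op-iff-stably-orthog} encodes a choice of fillers commuting with pullback, yields $\synJ(C,d)$; the computation rule $\synJ(C,d) \circ \refl_A = d$ is the upper triangle of the filled square, and stability holds because $J$ is a fixed morphism that is merely precomposed. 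Getting the indices in this reduction exactly right --- matching the shape of the universal lifting operation against $p \times U$ with an \emph{arbitrary} family $C$ over the identity type --- is the main obstacle.

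Finally, for the ``moreover'': an internal universe $(u_0 \colon \pt \to U,\ i \colon U_0 \to U)$ yields in $\CC_U$ the object $\synU := (u_0) \in \ob_1 \CC_U$, with $(\pt; u_0) = U_0$, together with $(u_0, i) \in \ob_2 \CC_U$ serving as the decoding family $\el$ over $\synU$; a term of $\synU$ in context $\Gamma$ is exactly a map $X_\Gamma \to U_0$, and $\el$ of such a term $a$ is the type $(\Gamma, i \circ a)$, so the pullback universe structure on $U_0$ induced by $i$ realises precisely the Tarski-style rules for $\synU$ in $\CC_U$. Closure under $\synPi$ then reads: for codes $a, b$ put $\synPi_{\synU}(a, b) := \PiStrux_0 \circ \name{(a,b)}$ (using the $\PiStrux_0$-structure on $U_0$); the required equation $\el(\synPi_{\synU}(a,b)) = \synPi(\el a, \el b)$ follows from the commuting square $i \circ \PiStrux_0 = \PiStrux \circ i^{\synPi}$ together with $i^{\synPi} \circ \name{(a,b)} = \name{(i a, i b)}$, the latter again uniqueness of classifying maps (now using that $i^{\synPi}$ is the functoriality of $U^{\synPi}$ in $U$). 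The remaining constructors are handled identically, via the corresponding commuting squares in the definition of ``closed under $\synSigma$-types'', etc.
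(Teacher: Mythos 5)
Your proposal is correct and follows essentially the same route as the paper's proof: classify the $\CC_U$-data by a unique map into the representing object $U^{(-)}$, postcompose with the structure map, extract the term-constructors from the accompanying pullback/adjunction/lifting data, and observe that strict stability under substitution is automatic because substitution in $\CC_U$ is composition in $\CC$ and classifying maps are unique. In particular you have correctly located the one genuine subtlety --- that for $\synId$-\elim\ one must pull back the single universal filler encoded by the lifting operation $J$, rather than choose a filler in each context --- which is precisely the point the paper singles out at the end of its proof (and is the reason $\IdStrux$-structure is packaged as a lifting operation rather than a mere orthogonality condition). The remaining index-chasing you flag as unfinished is real bookkeeping, but the paper is equally terse there, so this is a faithful reconstruction.
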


\begin{proof}
This proof is esentially a routine verification; we give the case of $\synPi$-types in full, and leave the rest mostly to the reader.

In a nutshell, the constructor $\synPi$ is induced by the map $\PiStrux$; and the constructors $\lambda$ and $\app$ are induced by the corresponding lccc structure in $\CC$.

Precisely, we treat the rules of $\synPi$-types (corresponding to the components of the desired $\synPi$-type structure) one at a time.

($\synPi$-\form): The premises
\[ \Gamma \types A\ \type \qquad \Gamma,\ x \oftype A \types B\ \type \]
in $\CC_U$ correspond to data in $\CC$ of the form
\[\xymatrix{
   A \ar[r] \ar[d] \pb & \tilde{U} \ar[d] \\
  \Gamma \ar^{\name{A}}[r] & U }
\qquad
\xymatrix{
 B \ar[r] \ar[d] \pb & \tilde{U} \ar[d] \\
 A \ar^{\name{B}}[r] & U }
\]
and hence to a map 
\[ (\name{A},\name{B}) \colon \Gamma \to U^{\synPi}. \]

Then the composite $\PiStrux \cdot (\name{A},\name{B})$ gives a type $\Gamma \to U$ which we take as $\synPi(A,B)$.  By construction, this is stable under substitution along any map $f \colon \Delta \to \Gamma$, since substitution in $\CC_U$ is again just composition in $\CC$.

($\synPi$-\intro): Besides $\Gamma$, $A$, $B$ as before, we have an additional premise
\[ \Gamma,\ x \oftype A \types t : B(x). \]

This is by definition a map $1_A \to B$ in $\CC/A$, corresponding by adjunction to a map $\hat{t} \colon 1_\Gamma \to \toposPi_{A \shortto \Gamma} B$ in $\CC/\Gamma$.  But 
\begin{align*}
  \toposPi_{A \shortto \Gamma} B \iso & (\name{A},\name{B})^* \toposPi_{A_\gen \shortto U^{\synPi}}B_\gen \\
                       \iso & (\name{A},\name{B})^* \PiStrux^* \tilde{U} \\
                       \iso & (\PiStrux \cdot (\name{A},\name{B}))^* \tilde{U} 
\end{align*}
so $\hat{t}$ corresponds to a section of $\synPi(A,B)$ over $\Gamma$, which we take as $\lambda(t)$.

Stability under substitution follows by the uniqueness in the universal property of $\toposPi_{A \shortto \Gamma}B$.

We could alternatively have defined $\lambda$ more analogously to $\synPi$, by representing the premises as a single map $(\name{A},\name{B},t) \colon \Gamma \to U^\lambda$ (where $U^\lambda := \toposSigma_{U^{\synPi} \shortto 1} \toposPi_{A_\gen \shortto U^{\synPi}} B_\gen$ represents the inputs of $\lambda$, i.e.\ the premises of $\synPi\text{-}\intro$); then taking the transpose of the generic term $t_\gen$ over $U^\lambda$; and then pulling this back along $(\name{A},\name{B},t)$.  In fact, thanks to the uniqueness in the universal property of $\toposPi_{A_\gen \shortto U^{\synPi}}B_\gen$, that would give the same result as the present, more straightforward, definitition.  However, the alternative definition has the advantage that its stability under substitution follows simply from properties of pullbacks; this becomes important for $\synId$-types, whose universal property lacks a uniqueness condition.

($\synPi$-\appRule):  The premises now are
\[ \Gamma \types A\ \type \qquad \Gamma,\ x \oftype A \types B\ \type \]
\[ \Gamma \types f : \synPi (A,B) \qquad \Gamma \types a : A \]
corresponding to $\Gamma$, $A$, $B$ as before, plus sections
\[ \xymatrix{
A \ar@/^/[dr] & & \synPi(A,B) \mathrlap{{} \iso \toposPi_{A \shortto \Gamma} B }\ar@/^/[dl] \\
& \Gamma \ar@/^/[ul]^-a \ar@/^/[ur]^-f
}
\phantom{{} \iso \toposPi_{A \shortto \Gamma} B} \]

Together, these give a section over $\Gamma$ of $\toposPi_{A \shortto \Gamma} B \times_\Gamma A$; so composing this with the evaluation map $\ev_{A,B}$ of $\toposPi_{A \shortto \Gamma} B$ gives a map $\Gamma \to B$ lifting $a$, which we take to be $\app(f,a)$.

($\synPi$-\comp): here, we have premises $\Gamma, A, B, t$ as in $\synPi$-\intro, and $a$ as in $\synPi$-\appRule; and we have formed $\app(\lambda(t),a)$ as prescribed above.  So, unwinding the isomorphism $\synPi(A,B) \iso \toposPi_{A \shortto \Gamma} B$ used in each case, 
\begin{align*}
 \app(\lambda(t),a) & = \ev_{A,B} \cdot (\hat{t},a) \\
                                    & = t \cdot a
\end{align*}
as desired, by the usual rules of LCCCs.

This completes the proof for $\Pi$-structures.

As indicated above, the remaining constructors are for the most part entirely analogous; the only subtlety is in the case for the $\synId$-\elim\ rule.   In this case, there are two ways that one could define the appropriate structure: one can either pull back to each specific context and then choose liftings, or choose a lifting in the universal context and then pull it back (as discussed following the $\synPi$-\intro\ case above).  The second of these is the correct choice: the first is not automatically stable under substitution.  (For other constructors, this distinction does not arise, since their strict categorical universal properties canonically determine the maps involved.)  And, in fact, the “universal lifting” required is precisely the internal lifting operation provided by the $\IdStrux$-structure on $U$.

\end{proof}

%%% Local Variables: 
%%% mode: latex
%%% TeX-master: "simplicial-model"
%%% End: 

\section{The Simplicial Model} \label{section:the-model}

In this section, we will apply the techniques of Section \ref{section:models-from-universes} to construct a model of type theory in the category $\sSets$. As mentioned in the Introduction, type dependency is interpreted using Kan fibrations and in particular the closed type will be Kan complexes. To this end, we construct (for any regular cardinal $\alpha$) a Kan fibration $p_\alpha \colon \UUt \to \UU$, weakly universal among Kan fibrations with $\alpha$-small fibers, and investigate the key properties of $\UU$ and $p_\alpha$. We then show that $\UU$ is a Kan complex, and (when $\alpha$ is inaccessible) carries the various logical structures defined in Section~\ref{subsec:logical-structure-on-universes}.  Together, these yield our first main goal: a model of type theory in $\sSets$, with an internal universe.

\subsection{A universe of Kan complexes} \label{subsec:representability-of-fibs}

In constructing a universe $\UU$ intended to represent $\alpha$-small Kan fibrations, one might expect (by the Yoneda lemma) to simply define $(\UU)_n$ as the set of $\alpha$-small fibrations over $\Delta[n]$.  This definition has two problems: firstly, it gives not sets, but proper classes; and secondly, it is not strictly functorial, since pullback is functorial only up to isomorphism.

Some extra technical device is therefore needed to resolve these issues.  Several possible solutions exist\footnote{Other possible approaches include ones based on the general results of \cite{hofmann:on-the-interpretation} and \cite{lumsdaine-warren:local-universes}, or taking $\UU_n$ as $[(\int\! \Delta[n])^\op, \Sets_{<\alpha}]$ as in \cite{hofmann-streicher:lifting-grothendieck-universes}.}; we take the approach of passing to isomorphism classes, having first added well-orderings to the mix so that fibrations have no non-trivial automorphisms (without which the crucial Lemmas~\ref{lemma:w-preserves-lims}, \ref{lemma:w-representable} would fail).  We emphasise, however, that this is the sole reason for introducing the well-orderings: they are of no intrinsic interest or significance. % (and are indeed occasionally something of an inconvenience). % inconvenience = pain in the ass

\begin{definition}
A \emph{well-ordered morphism} of simplicial sets consists of an ordinary map of simplicial sets $f \colon Y \to X$, together with a function assigning to each simplex $x \in X_n$ a well-ordering on the fiber $Y_x := f^{-1}(x) \subseteq Y_n$.

If $f \colon Y \to X$, $f' \colon Y' \to X$ are well-ordered morphisms into a common base $X$, an \emph{isomorphism} of well-ordered morphisms from $f$ to $f'$ is an isomorphism $Y \iso Y'$ over $X$ preserving the well-orderings on the fibers.
\end{definition}

\begin{proposition} \label{prop:well-orderings-rigid}
Given two well-ordered sets, there is at most one isomorphism between them.  Given two well-ordered morphisms over a common base, there is at most one isomorphism between them.
\end{proposition}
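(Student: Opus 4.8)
The plan is to reduce both statements to the classical rigidity of well-orders, and then to handle the simplicial statement one fiber at a time.

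For the first claim I would first isolate the standard lemma that any order-preserving injection $h \colon A \to A$ of a well-ordered set $(A,<)$ into itself is inflationary, i.e.\ $h(a) \geq a$ for all $a \in A$. This is the only place where well-foundedness does any work: if the set $\{a \in A : h(a) < a\}$ were nonempty it would have a least element $a_0$; then minimality of $a_0$ applied to $h(a_0) < a_0$ gives $h(h(a_0)) \geq h(a_0)$, whereas $h$ being order-preserving and injective applied to $h(a_0) < a_0$ gives $h(h(a_0)) < h(a_0)$ --- a contradiction. Granting this, if $f, g \colon A \to B$ are two isomorphisms of well-ordered sets, then $\phi := g^{-1} \circ f$ is an order-automorphism of $A$; applying the lemma to $\phi$ and to $\phi^{-1}$ yields $\phi(a) \geq a$ and $a \geq \phi(a)$ for every $a$, so $\phi = \mathrm{id}_A$ and hence $f = g$.

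For the second claim, let $f \colon Y \to X$ and $f' \colon Y' \to X$ be well-ordered morphisms, and suppose $\phi, \psi \colon Y \to Y'$ are both isomorphisms of well-ordered morphisms from $f$ to $f'$. Fix a dimension $n$ and a simplex $y \in Y_n$, and set $x := f(y) \in X_n$. Since $\phi$ and $\psi$ are maps over $X$, both $\phi(y)$ and $\psi(y)$ lie in the fiber $Y'_x := (f')^{-1}(x) \subseteq Y'_n$, while $y \in Y_x$. By definition of an isomorphism of well-ordered morphisms, $\phi$ and $\psi$ restrict to order-isomorphisms $Y_x \to Y'_x$ between the well-ordered fibers; by the first claim these two restrictions coincide, so $\phi(y) = \psi(y)$. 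As $y$ and $n$ were arbitrary, $\phi = \psi$.

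I expect no genuine obstacle here: the only content is the inflationary-map lemma in the first paragraph, and the second statement is a purely formal fiberwise application of the first. The single point to state carefully is that an isomorphism of well-ordered morphisms is \emph{by definition} a map over the base whose fiberwise restrictions are order-isomorphisms, which is exactly what licenses applying the first claim separately on each fiber.
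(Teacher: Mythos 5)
Your proposal is correct and follows essentially the same route as the paper: the paper disposes of the first claim as classical (by induction, i.e.\ the inflationary-map argument you spell out) and reduces the second to the first applied fiber by fiber. You have simply written out in full the classical rigidity argument that the paper leaves implicit.
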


\begin{proof}
The first statement is classical (and immediate by induction); the second follows from the first, applied in each fiber.
\end{proof}

\begin{definition}
Fix (for the remainder of this and the following section) a regular cardinal $\alpha$.  Say a map of simplicial sets $f \colon Y \to X$ is \emph{$\alpha$-small} if each of its fibers $Y_x$ has cardinality $< \alpha$.
\end{definition}

Given a simplicial set $X$, define $\W (X)$ to be the set of isomorphism classes\footnote{We use \emph{isomorphism classes} in the sense of “Scott’s trick” \cite{scott:scotts-trick} for constructing proper class quotients.  The class of \emph{all} well-ordered morphisms isomorphic to a given one is a proper class, so one instead uses the subclass of such morphisms \emph{of minimal rank}, which is a set.} of $\alpha$-small well-ordered morphisms $Y \to X$; together with the pullback action $\W(f) := f^* \colon \W(X) \to \W(X')$, for $f \colon X' \to X$, this gives a contravariant functor $\W \colon \sSets^{\op} \to \Sets$.

\begin{lemma} \label{lemma:w-preserves-lims}
$\W$ preserves all limits: $\W(\colim_i X_i) \iso \lim_i \W(X_i)$. 
\end{lemma}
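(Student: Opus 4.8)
The plan is to show directly that the canonical comparison map is a bijection, building the glued object fiberwise and using the rigidity of well-ordered morphisms (Proposition~\ref{prop:well-orderings-rigid}) to guarantee that every choice made in the construction is forced, hence coherent.

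Write $X := \colim_i X_i$, with coprojections $\iota_i \colon X_i \to X$. Since colimits in $\sSets$ are computed levelwise, every simplex of $X$ is of the form $\iota_i(\bar x)$ for some $i$ and $\bar x \in (X_i)_n$, and any two representatives $(i,\bar x)$, $(j,\bar x')$ of the same simplex are linked by a zigzag of morphisms in the diagram. I would also record the elementary description of a well-ordered $\alpha$-small morphism $f\colon Y \to X$ in terms of fibers: it assigns to each $x \in X_n$ a well-ordered set $Y_x$ of cardinality $< \alpha$, together with restriction functions $Y_x \to Y_{\theta^* x}$ for each $\theta\colon[m]\to[n]$ in $\Delta$, functorial in $\theta$ (the well-orderings themselves carrying no compatibility condition); and that the pullback of $f$ along $g\colon X' \to X$ is the well-ordered morphism whose fiber over $x' \in X'_n$ is $Y_{g x'}$ with the same well-ordering. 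The comparison map $\Phi\colon \W(X) \to \lim_i \W(X_i)$ sends $[f]$ to the compatible family $([\,\iota_i^* f\,])_i$; it remains to prove $\Phi$ is injective and surjective.

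For injectivity, suppose $f\colon Y \to X$ and $f'\colon Y' \to X$ become isomorphic as well-ordered morphisms after pulling back along every $\iota_i$, compatibly over the diagram. Given $x \in X_n$ with representative $\iota_i(\bar x) = x$, the chosen isomorphism $\iota_i^* Y \iso \iota_i^* Y'$ restricts over $\bar x$ to an order-isomorphism $\gamma_x\colon Y_x \iso Y'_x$; by the first part of Proposition~\ref{prop:well-orderings-rigid} such an order-isomorphism is unique, so $\gamma_x$ is independent of the representative, and the same uniqueness (with the naturality in $\Delta$ of the isomorphisms $\iota_i^* Y \iso \iota_i^* Y'$) forces the $\gamma_x$ to commute with all restriction maps. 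Thus $(\gamma_x)_x$ is an isomorphism $Y \iso Y'$ over $X$, so $[f] = [f']$. For surjectivity, start from a compatible family $([\,f_i\colon Y_i \to X_i\,])_i$; for each diagram morphism $u\colon X_i \to X_j$ compatibility yields an order-isomorphism $Y_i \iso u^* Y_j$ over $X_i$, necessarily unique by Proposition~\ref{prop:well-orderings-rigid}, so these automatically satisfy the cocycle condition. Now define $Y \to X$ fiberwise: for $x \in X_n$ pick a representative $\iota_i(\bar x) = x$, set $Y_x := (Y_i)_{\bar x}$ with its well-ordering, and take the restriction maps from those of $Y_i$. The zigzag between any two representatives, the coherence isomorphisms above, and the uniqueness in Proposition~\ref{prop:well-orderings-rigid} show that $Y_x$ and its restriction maps are well-defined up to forced identification; functoriality in $\Delta$ is inherited from each $Y_i$, and each $Y_x$ has cardinality $<\alpha$, so $Y \to X$ is a well-ordered $\alpha$-small morphism, whose pullback along $\iota_i$ has fiber $(Y_i)_{\bar x}$ over $\bar x$ and so is isomorphic to $Y_i$ over $X_i$. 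Hence $\Phi([Y\to X]) = ([f_i])_i$, and $\Phi$ is bijective.

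The main obstacle is the bookkeeping in the surjectivity step: checking that the fiberwise construction of $Y$ does not depend on the chosen representatives of simplices, and that the pieces assemble into an honest simplicial set over $X$. Every such check, however, collapses to the observation that order-isomorphisms between well-ordered fibers are unique, so there is never any real choice to coordinate — which is precisely the point of Proposition~\ref{prop:well-orderings-rigid}, and the reason the well-orderings were introduced in the first place. (One could instead reduce to coproducts and coequalizers, but the direct argument is no longer and keeps the role of rigidity visible.)
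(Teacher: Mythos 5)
Your proof is correct and follows essentially the same strategy as the paper's: construct the glued morphism fiberwise, letting Proposition~\ref{prop:well-orderings-rigid} guarantee that the local choices and compatibility isomorphisms are forced, hence assemble coherently. The paper's version is terser (it leaves the zigzag-independence and the $\Delta$-functoriality checks implicit), but the substance is identical.
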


\begin{proof}
Suppose $F \colon \I \to \sSets$ is some diagram, and $X = \colim_\I F$ is its colimit, with injections $\nu_i \colon F(i) \to X$.  We need to show that the canonical map $\W(X) \to \lim_\I \W(F(i))$ is an isomorphism.

To see that it is surjective, suppose we are given $[f_i \colon Y_i \to F(i)] \in \lim_\I \W(F(i))$.  For each $x \in X_n$, choose some $i$ and $\bar{x} \in F(i)$ with $\nu(\bar{x}) = x$, and set $Y_x := (Y_i)_{\bar{x}}$.  By Proposition~\ref{prop:well-orderings-rigid}, this is well-defined up to \emph{canonical} isomorphism, independent of the choices of representatives $i$, $\bar{x}$, $Y_i$, $f_i$.  The total space of these fibers then defines a well-ordered morphism $f \colon Y \to X$, with fibers of size $<\alpha$, and with pullbacks isomorphic to $f_i$ as required.

For injectivity, suppose $f, f'$ are well-ordered morphisms over $X$, and $\nu_i^* f \iso \nu_i^* f'$ for each $i$.  By Proposition~\ref{prop:well-orderings-rigid}, these isomorphisms must agree on each fiber, so together give an isomorphism $f \iso f'$.
\end{proof}

Define the simplicial set $\WW$ by
\[\WW := \W \cdot \y^{\op} \colon \Delta^{\op} \to \Sets,\]
where $\y$ denotes the Yoneda embedding $\Delta \to \sSets$.

\begin{lemma} \label{lemma:w-representable}
The functor $\W$ is representable, represented by $\WW$.
\end{lemma}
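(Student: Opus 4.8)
The plan is to identify the lemma with the single concrete statement that a canonical comparison map is a bijection. Indeed, if a simplicial set $X_0$ represents $\W$, then $(X_0)_n = \Hom_{\sSets}(\Delta[n], X_0) \iso \W(\Delta[n])$ naturally in $[n]$, so $X_0$ is forced to be $\WW$; thus the entire content of the lemma is that the comparison
\[ \eta_X \colon \W(X) \longrightarrow \Hom_{\sSets}(X, \WW) \]
is a bijection, naturally in $X$.

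First I would write $\eta$ down explicitly. Given $[f] \in \W(X)$ represented by an $\alpha$-small well-ordered morphism $f \colon Y \to X$, let $\eta_X([f]) \colon X \to \WW$ be the map sending a simplex $x \in X_n$, viewed as $\bar x \colon \Delta[n] \to X$, to $[\bar x^{*} f] \in \W(\Delta[n]) = \WW_n$. That this assignment respects faces and degeneracies is exactly the strict functoriality of the pullback action $\W(-) = (-)^*$ on isomorphism classes — $(\bar x \circ \y\theta)^* f$ and $(\y\theta)^*\bar x^* f$ are canonically, hence (by Proposition~\ref{prop:well-orderings-rigid}) uniquely, isomorphic over $\Delta[m]$, so they agree in $\W(\Delta[m])$ — and naturality of $\eta$ in $X$ is then immediate. (It is worth recalling at this point that each $\W(\Delta[n])$ is a genuine set, so that $\WW$ is a bona fide simplicial set; this is one of the reasons for passing to isomorphism classes of well-ordered morphisms.)

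Next I would prove $\eta_X$ is bijective. For $X = \Delta[m]$ representable, evaluating $\eta_{\Delta[m]}([f])$ at $\mathrm{id}_{\Delta[m]}$ recovers $[f]$, so $\eta_{\Delta[m]}$ is exactly the Yoneda identification $\W(\Delta[m]) = \WW_m \iso \Hom_{\sSets}(\Delta[m],\WW)$, a bijection. For general $X$, present $X$ as the canonical colimit of representables over its category of elements, $X \iso \colim_{(n,x)} \Delta[n]$. Applying $\Hom_{\sSets}(-,\WW)$ converts this colimit into the limit $\lim_{(n,x)} \Hom_{\sSets}(\Delta[n],\WW)$ by the universal property of colimits, while applying $\W$ converts it into the limit $\lim_{(n,x)} \W(\Delta[n])$ by Lemma~\ref{lemma:w-preserves-lims}; the components $\eta_{\Delta[n]}$ assemble into a morphism between these two limit cones over the same index category, which is an isomorphism at each vertex by the representable case and hence an isomorphism on the limits. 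Unwinding the definitions shows this isomorphism is precisely $\eta_X$, so $\eta$ is a natural isomorphism and $\WW$ represents $\W$.

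I do not expect a serious obstacle here: the only delicate point is the strict coherence needed for $\eta_X([f])$ to be an honest simplicial map and for the $\eta_{\Delta[n]}$ to glue into a morphism of limit cones, and that coherence is exactly the rigidity supplied by Proposition~\ref{prop:well-orderings-rigid} — which is also precisely what makes Lemma~\ref{lemma:w-preserves-lims}, on which the whole argument rests, go through.
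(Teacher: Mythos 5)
Your proof is correct and follows essentially the same route as the paper's (Yoneda identification on standard simplices, plus the fact that both $\W$ and $\Hom(-,\WW)$ carry colimits to limits via Lemma~\ref{lemma:w-preserves-lims}), just spelled out more explicitly by first constructing the comparison map $\eta$ and then verifying it is a bijection. One small remark: the well-definedness and simpliciality of $\eta_X([f])$ already follow from the fact that pullback is strictly functorial on \emph{isomorphism classes}, so invoking Proposition~\ref{prop:well-orderings-rigid} at that particular point is harmless but not needed — its real role, as you correctly note, is inside Lemma~\ref{lemma:w-preserves-lims}.
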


\begin{proof}
The functors $\W$ and $\Hom(-,\WW)$ agree up to isomorphism on the standard simplices (by the Yoneda lemma), and send colimits in $\sSets$ to limits; but every simplicial set is canonically a colimit of standard simplices.
\end{proof}

\begin{notation}
Given an $\alpha$-small well-ordered map $f \colon Y \to X$, the corresponding map $X \to \WW$ will be denoted by $\name{f}$.
\end{notation}

Applying the natural isomorphism above to the identity map $\WW \to \WW$ yields a universal $\alpha$-small well-ordered simplicial set $\WWt \to \WW$.  Explicitly, $n$-simplices of $\WWt$ are classes of pairs
\[(f \colon Y \to \Delta [n], s \in f^{-1}(1_{[n]}))\]
i.e.\ the fiber of $\WWt$ over an $n$-simplex $\name{f} \in \WW$ is exactly (an isomorphic copy of) the main fiber of $f$.  So, by construction:

\begin{proposition}
The canonical projection $\WWt \to \WW$ is strictly universal for $\alpha$-small well-ordered morphisms; that is, any such morphism can be expressed uniquely as a pullback of this projection. \qed
\end{proposition}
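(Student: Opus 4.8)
The proposition is Lemma~\ref{lemma:w-representable} re-expressed in the language of pullback squares, so the plan is to unwind the representability isomorphism while keeping careful track of variances. First I would invoke the standard fact about representing objects: since $\WWt \to \WW$ is, by the construction preceding the statement, a representative of the universal element of $\W$, the natural bijection $\Hom(X,\WW) \iso \W(X)$ of Lemma~\ref{lemma:w-representable} is precisely the map sending $\varphi \colon X \to \WW$ to $\varphi^*[\WWt \to \WW]$, where $\varphi^* = \W(\varphi)$ is the pullback action and $[\,\cdot\,]$ denotes the isomorphism class of an $\alpha$-small well-ordered morphism.

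For existence, given an $\alpha$-small well-ordered morphism $g \colon Z \to X$, I would take $\name{g} \colon X \to \WW$ to be the map of the Notation above, i.e.\ the one corresponding to $[g] \in \W(X)$; by the description of the representability bijection just recalled, this means exactly that $\name{g}^*[\WWt \to \WW] = [g]$, so $g$ is isomorphic over $X$, as a well-ordered morphism, to the pullback of $\WWt \to \WW$ along $\name{g}$. Since pullback squares are stable under composition with isomorphisms, $g$ itself then fits into a pullback square whose bottom edge is $\name{g}$. For uniqueness, if $\varphi \colon X \to \WW$ is any map for which $g$ (with its well-orderings) arises as a pullback of $\WWt \to \WW$ along $\varphi$, then $\varphi^*[\WWt \to \WW] = [g] = \name{g}^*[\WWt \to \WW]$ in $\W(X)$, and since $\varphi \mapsto \varphi^*[\WWt \to \WW]$ is a bijection this forces $\varphi = \name{g}$.

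I do not expect any genuine obstacle; all the substance already lies in Lemmas~\ref{lemma:w-preserves-lims} and~\ref{lemma:w-representable}. The two points that deserve a moment's care are the bookkeeping of variances (both $\W$ and $\Hom(-,\WW)$ are contravariant), and the exact place where rigidity enters. Because $\WWt$ was built so that its fiber over $\name{f}$ is a canonical copy of the main fiber of $f$, the comparison isomorphism $\name{g}^*\WWt \iso Z$ over $X$ is canonical, and by Proposition~\ref{prop:well-orderings-rigid} it is moreover the \emph{only} isomorphism of well-ordered morphisms between the two. It is precisely this rigidity that makes the projection strictly rather than merely weakly universal: without the well-orderings one would still obtain a classifying map $\name{g}$, but it would no longer be pinned down uniquely.
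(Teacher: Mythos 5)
Your proposal is correct and takes exactly the paper's route: the paper leaves this proposition without an explicit proof (``So, by construction: \ldots'' followed immediately by the tombstone), treating it as the standard Yoneda-style unwinding of the representability isomorphism of Lemma~\ref{lemma:w-representable}. You simply spell out that unwinding, and your closing remark correctly identifies the rigidity of well-orderings (Proposition~\ref{prop:well-orderings-rigid}) as what underlies the strictness of the universal property, since it is what makes Lemmas~\ref{lemma:w-preserves-lims} and~\ref{lemma:w-representable} hold in the first place.
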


\begin{corollary}
The canonical projection $\WWt \to \WW$ is weakly universal for $\alpha$-small morphisms of simplicial sets: any such morphism can be given, not necessarily uniquely, as a pullback of this projection.
\end{corollary}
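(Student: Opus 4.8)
The plan is to deduce this from the preceding Proposition (strict universality for $\alpha$-small well-ordered morphisms) by simply equipping an arbitrary $\alpha$-small morphism with well-orderings on its fibers. So, given an $\alpha$-small map $f \colon Y \to X$ of simplicial sets, the first step is to invoke the axiom of choice (indeed the well-ordering theorem, applied once for each of the set-many fibers $Y_x$, $x \in X_n$, $n \in \N$) to choose a well-ordering on every fiber $Y_x$. Since each $Y_x$ has cardinality $< \alpha$ by hypothesis, this data makes $f$ into an $\alpha$-small well-ordered morphism $\tilde{f}$, whose underlying map of simplicial sets is exactly $f$.

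Next, apply the preceding Proposition to $\tilde{f}$: it is (uniquely) expressible as a pullback of $\WWt \to \WW$ along the classifying map $\name{\tilde{f}} \colon X \to \WW$. The key observation is that the resulting pullback square is a commuting square \emph{of simplicial sets} — the well-orderings are encoded in the codomain $\WWt$ and recovered along $\name{\tilde{f}}$, but the square $Y \to \WWt$, $X \to \WW$ over $f$ and $\WWt \to \WW$ lives in $\sSets$ and is a pullback there. Hence, forgetting the chosen well-orderings, this same square exhibits $f$ itself as a pullback of $\WWt \to \WW$. The non-uniqueness in the statement is then exactly the dependence on the arbitrary choice of well-orderings made in the first step (different choices generally yield different maps $X \to \WW$).

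There is no real obstacle here; the only points deserving comment are the (essential, and harmless) use of choice to well-order the fibers, and the remark that the pullback square produced by the Proposition is already a diagram of simplicial sets, so that discarding the well-ordering structure does not disturb it. One could optionally note that this is the precise sense in which $\WWt \to \WW$ fails to be \emph{strictly} universal for plain morphisms: the classifying map is only well-defined after a rigidification of the fibers.
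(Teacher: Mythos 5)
Your proof is correct and is precisely the paper's argument: well-order the fibers by the axiom of choice, then invoke the strict universal property for well-ordered morphisms and observe that the resulting pullback square is already a square in $\sSets$. Your additional remark about where non-uniqueness enters is accurate but unneeded for the proof itself.
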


\begin{proof}
By the well-ordering principle and the axiom of choice, one can well-order the fibers, and then use the universal property of $\WW$.
\end{proof}

\begin{definition}
 Let $\U \subseteq \W$ (respectively, $\UU \subseteq \WW$) be the subobject consisting of (isomorphism classes of) $\alpha$-small well-ordered fibrations\footnote{Here and throughout, by ``fibration'' we always mean ``Kan fibration''.}; and define $p_\alpha \colon \UUt \to \UU$ as the pullback:
 \[\xymatrix{ \UUt \ar[r] \ar[d]_{p_\alpha}  \pb & \WWt \ar[d] \\
  \UU \ar@{^{(}->}[r] & \WW
 }\]
\end{definition}

\begin{lemma}\label{U:Kan_fib}
 The map $p_\alpha \colon \UUt \to \UU$ is a fibration.
\end{lemma}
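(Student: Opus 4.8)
The plan is to verify the lifting property for $p_\alpha$ directly, using the weak universality of $\WWt \to \WW$ together with the explicit description of $\UU$ as classifying $\alpha$-small well-ordered fibrations. Concretely, a Kan fibration is a map having the right lifting property against all horn inclusions $\Lambda^k[n] \hookrightarrow \Delta[n]$. So I would start with a commutative square with left edge $\Lambda^k[n] \hookrightarrow \Delta[n]$, top edge $\Lambda^k[n] \to \UUt$, and bottom edge $\name{f} \colon \Delta[n] \to \UU$, and aim to produce a diagonal filler $\Delta[n] \to \UUt$.

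First I would unwind what the data in this square amount to. The bottom map $\name{f} \colon \Delta[n] \to \UU$ classifies an $\alpha$-small well-ordered fibration $f \colon Y \to \Delta[n]$ (we may take the canonical pullback of $p_\alpha$ along $\name{f}$). By the pullback definition of $\UUt$ and the explicit description of $\WWt$, the top map $\Lambda^k[n] \to \UUt$ together with the square corresponds to a section over $\Lambda^k[n]$ of the restricted fibration $f|_{\Lambda^k[n]} \colon Y|_{\Lambda^k[n]} \to \Lambda^k[n]$ — that is, to a lift of the horn inclusion against $f$ restricted over the horn. Giving a diagonal filler $\Delta[n] \to \UUt$ amounts, again by the universal property, to extending this to a section of $f$ over all of $\Delta[n]$, i.e.\ to a solution of the lifting problem
\[\xymatrix{
\Lambda^k[n] \ar[r] \ar@{^{(}->}[d] & Y \ar[d]^f \\
\Delta[n] \ar[r]_{\mathrm{id}} \ar@{.>}[ur] & \Delta[n].
}\]
But $f$ is by hypothesis a Kan fibration, so such a filler exists; moreover any such filler, being a section of $f$ extending the given one, automatically classifies a map $\Delta[n] \to \UUt$ lying over $\name{f}$ and restricting to the given top map on $\Lambda^k[n]$, since $\name{f}$ was chosen so that its pullback of $p_\alpha$ is exactly $f$.

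The main thing to be careful about — really the only subtlety — is the bookkeeping around isomorphism classes: strictly speaking $\WWt$ and hence $\UUt$ are built from \emph{classes} of well-ordered morphisms, so one must check that "a diagonal filler $\Delta[n]\to\UUt$ over $\name f$ restricting correctly on $\Lambda^k[n]$" corresponds \emph{exactly} to "a section of $f$ over $\Delta[n]$ extending the given section over $\Lambda^k[n]$", with no loss or gain of data coming from the quotient by isomorphism. This is where Proposition~\ref{prop:well-orderings-rigid} (rigidity of well-ordered morphisms) does the work: it guarantees that choosing the canonical pullback $f = \name f^* p_\alpha$ as a strict representative makes maps into $\UUt$ over $\name f$ literally the same as sections of this particular $f$, so the correspondence is a genuine bijection and the filler transports back. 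I do not expect any genuine obstacle beyond writing this identification cleanly; the homotopy-theoretic content is entirely absorbed into the hypothesis that $f$ itself is a Kan fibration, which holds because $\UU$ was defined to classify only well-ordered \emph{fibrations}.
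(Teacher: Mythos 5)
Your proposal is correct and is essentially the argument in the paper: the square factors through the pullback of $p_\alpha$ along $\name{f}$, which by the definition of $\UU$ is a Kan fibration, so the inner horn-filling problem has a solution and composing gives the filler. The one small caveat is that your worry about the quotient by isomorphism classes is not actually a live issue at this point: once $\WWt \to \WW$ (and hence $\UUt \to \UU$) are constructed as honest simplicial sets, the bijection between sections of $\name{f}^*p_\alpha$ and lifts $\Delta[n] \to \UUt$ over $\name{f}$ is just the ordinary universal property of the pullback, which is exactly the factorization the paper invokes — no further appeal to Proposition~\ref{prop:well-orderings-rigid} is needed.
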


\begin{proof}
 Consider a horn to be filled
  \[\xymatrix{ \Lambda^k [n] \ar[r] \ar@{^{(}->}[d] & \UUt \ar[d]^{p_\alpha} \\
  \Delta [n] \ar[r]^{\name{x}} & \UU
 }\]
 for some $0 \leq k \leq n$.  It factors through the pullback
   \[\xymatrix{ \Lambda^k [n] \ar[r] \ar@{^{(}->}[d] & \bullet \ar[r] \ar[d]^{x} \pb & \UUt \ar[d]^{p_\alpha} \\
  \Delta [n] \ar@{=}[r] & \Delta [n] \ar[r]^{\name{x}} & \UU
 }\]
 where by the definition of $\UU$ and $\UUt$, $x$ is a fibration. Thus the left square admits a diagonal filler, and hence so does the outer rectangle.
\end{proof}

\begin{lemma}
 An $\alpha$-small well-ordered morphism $f \colon Y \to X \in \W (X)$ is a fibration if and only if $\name{f} \colon X \to \WW$ factors through $\UU$.
\end{lemma}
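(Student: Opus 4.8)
The plan is to reduce both implications to a single fact: $f$ is, up to canonical isomorphism, the pullback of the universal $\alpha$-small well-ordered morphism $\WWt \to \WW$ along its classifying map $\name{f}$, and then to unwind what $\name{f}$ does on simplices.

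First I would record the explicit description of $\name{f}$ supplied by the representing isomorphism of Lemma~\ref{lemma:w-representable}, together with its naturality: an $n$-simplex $x$ of $X$, regarded as a map $x \colon \Delta[n] \to X$, is sent by $\name{f}$ to the isomorphism class of the pullback $x^{*}f$. Since $\UU \subseteq \WW$ is a sub-simplicial-set, the map $\name{f}$ factors through $\UU$ if and only if $[x^{*}f] \in \UU_{n}$ for every simplex $x$ of $X$, that is, if and only if every pullback of $f$ to a standard simplex is a fibration. (Here one uses that fibrancy of $x^{*}f$ depends only on its underlying map of simplicial sets, not on the induced well-ordering, so that the defining condition of $\UU$ applies cleanly.)

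Granting this, the ``only if'' direction is immediate: if $f$ is a fibration, then each $x^{*}f$ is a pullback of a fibration, hence a fibration, so by the criterion just recorded $\name{f}$ factors through $\UU$. For the ``if'' direction, suppose $\name{f}$ factors as $X \xrightarrow{g} \UU \hookrightarrow \WW$. By the strict universal property of $\WWt \to \WW$, $f$ is the pullback of $\WWt \to \WW$ along $\name{f}$; and by definition $p_\alpha \colon \UUt \to \UU$ is the pullback of $\WWt \to \WW$ along $\UU \hookrightarrow \WW$. Pasting these two pullback squares exhibits $f$ as the pullback of $p_\alpha$ along $g$. Since $p_\alpha$ is a fibration by Lemma~\ref{U:Kan_fib}, and fibrations are stable under pullback, $f$ is a fibration.

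I do not anticipate a serious obstacle: once Lemmas~\ref{lemma:w-representable} and~\ref{U:Kan_fib} are in hand this is a short diagram chase. The only point that warrants care is the bookkeeping in the second step — correctly identifying the action of $\name{f}$ on simplices via naturality of the representing isomorphism, and checking that factoring a map into $\WW$ through the subobject $\UU$ really is the levelwise containment condition on isomorphism classes.
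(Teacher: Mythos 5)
Your proof is correct and follows essentially the same route as the paper: the forward direction observes that each pullback $x^*f$ to a simplex is a fibration, and the converse pastes pullback squares to exhibit $f$ as a pullback of $p_\alpha$, then invokes Lemma~\ref{U:Kan_fib}. The only difference is that you spell out the simplex-level bookkeeping behind "factors through the subobject $\UU$" a bit more explicitly, which the paper leaves implicit.
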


\begin{proof}
 For `$\Rightarrow$', assume that $f \colon Y \to X$ is a fibration. Then the pullback of $f$ to any representable is certainly a fibration:
 \[\xymatrix{ \bullet \ar[r] \ar[d]_{x^*f} \pb & Y \ar[d]^{f} \\
 \Delta [n] \ar[r]^x & X.
 }\]
 so $\name{f}(x) =\name{ x^*f} \in \UU$, and hence $\name{f}$ factors through $\UU$.

 Conversely, suppose $\name{f}$ factors through $\UU$. Then we obtain:
 \[\xymatrix{ Y \ar[r] \ar[d]_{f} \pb & \UUt \ar[r] \ar[d]^{p_\alpha} \pb & \WWt \ar[d] \\
 X \ar[r] & \UU \ar@{^{(}->}[r] & \WW,
 }\]
 where the lower composite is $\name{f}$, and the outer rectangle and the right square are by construction pullbacks.  Hence so is the left square; so by Lemma~\ref{U:Kan_fib} $f$ is a fibration.
\end{proof}

\begin{corollary} \label{cor:U_classifies}
The functor $\U$ is representable, represented by $\UU$; so ${p_\alpha} \colon \UUt \to \UU$ is strictly universal for $\alpha$-small well-ordered fibrations, and weakly universal for $\alpha$-small fibrations. \qed
\end{corollary}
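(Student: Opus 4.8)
The plan is to read the corollary off from the two lemmas immediately preceding it, together with the analogous facts already established for $\WWt \to \WW$; essentially no new work is needed beyond careful bookkeeping with universal properties.

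First I would prove that $\U$ is represented by $\UU$. Fix a simplicial set $X$. By Lemma~\ref{lemma:w-representable} there is a natural bijection $\W(X) \iso \Hom(X,\WW)$ carrying an $\alpha$-small well-ordered morphism $f$ to its classifying map $\name{f}$. By the lemma immediately above, $f$ lies in the subset $\U(X) \subseteq \W(X)$ --- i.e.\ $f$ is a fibration --- precisely when $\name{f}$ factors through the monomorphism $\UU \hookrightarrow \WW$; and since that mono induces an injection $\Hom(X,\UU) \hookrightarrow \Hom(X,\WW)$ whose image is exactly the set of maps factoring through $\UU$, the bijection $\W(X) \iso \Hom(X,\WW)$ restricts to a bijection $\U(X) \iso \Hom(X,\UU)$. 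This is natural in $X$ because the ambient bijection is and because $\U \hookrightarrow \W$ and $\Hom(-,\UU) \hookrightarrow \Hom(-,\WW)$ are subfunctor inclusions; hence $\U \iso \Hom(-,\UU)$. Tracing through, the universal element of $\U$ at $\UU$ is $p_\alpha \colon \UUt \to \UU$, since $\UUt$ was defined as the pullback of the universal $\WWt \to \WW$ along $\UU \hookrightarrow \WW$.

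Next I would deduce the strict universality of $p_\alpha$ for $\alpha$-small well-ordered fibrations. Such an $f \colon Y \to X$ is in particular an $\alpha$-small well-ordered morphism, hence a pullback of $\WWt \to \WW$ along the unique map $\name{f}$; being a fibration, $\name{f}$ factors, uniquely since $\UU \hookrightarrow \WW$ is monic, as $\iota \cdot g$ for some $g \colon X \to \UU$. As $p_\alpha$ is by definition the pullback of $\WWt \to \WW$ along $\iota$, pasting pullback squares gives $g^{*} p_\alpha \iso f$; and any map $X \to \UU$ classifying $f$ composes with $\iota$ to a map over $\WW$ classifying $f$, which must be $\name{f}$, so the map to $\UU$ must be $g$. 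Weak universality for arbitrary $\alpha$-small fibrations then follows exactly as in the corollary for $\WWt \to \WW$: well-order each fiber using the well-ordering principle and the axiom of choice, apply the strict universality just established, and discard the well-orderings again.

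I do not anticipate any real obstacle; the only points wanting a little care are the naturality of the identification $\U \iso \Hom(-,\UU)$ (handled by noting everything in sight is a subfunctor of the $\W$-picture) and making precise in what sense $p_\alpha$ is \emph{strictly} universal --- namely that the classifying map, and, since well-orderings are rigid by Proposition~\ref{prop:well-orderings-rigid}, the witnessing pullback square itself, are genuinely unique.
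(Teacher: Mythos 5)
Your proposal is correct and is exactly the argument the paper intends (the paper marks the corollary with an immediate \textsc{qed} precisely because it follows from Lemma~\ref{lemma:w-representable}, the lemma characterising when $\name{f}$ factors through $\UU$, the definition of $\UUt$ as a pullback, and the same well-ordering trick already used for $\WWt \to \WW$). You have simply filled in the bookkeeping that the authors left tacit, and each step you spell out is accurate.
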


In Section~\ref{subsec:pullback-reps}, we will strengthen this universal property, showing that while the representation of a fibration as a pullback of $p_\alpha$ may not be strictly unique, it is unique up to homotopy: precisely, the space of such representations is contractible.

\subsection{Kan fibrancy of the universe}  \label{subsec:fibrancy-of-u}

The previous section provides the main ingredients needed to use $\UU$ as a universe in the sense of Section~\ref{section:models-from-universes}, and hence to give a model of the core type theory.  However, to give additionally a type-theoretic universe within that model, we need to show that each $\UU$ itself can be seen as a \emph{type} of the model; in other words, that it is Kan.  The main goal of this section is therefore to prove the following theorem:

\begin{theorem}\label{U:KanCpx}
 The simplicial set $\UU$ is a Kan complex.
\end{theorem}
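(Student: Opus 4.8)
The plan is to verify the Kan condition for $\UU$ directly: given a horn $\Lambda^k[n] \to \UU$, we must extend it along the inclusion $\Lambda^k[n] \hookrightarrow \Delta[n]$. By Corollary~\ref{cor:U_classifies}, a map $\Lambda^k[n] \to \UU$ is the same as an $\alpha$-small well-ordered fibration $Y \to \Lambda^k[n]$ (the representing property of $\UU$), and an extension to $\Delta[n]$ amounts to finding an $\alpha$-small well-ordered fibration $\bar{Y} \to \Delta[n]$ together with an isomorphism $i^* \bar Y \iso Y$ of well-ordered morphisms over $\Lambda^k[n]$, where $i \colon \Lambda^k[n] \hookrightarrow \Delta[n]$. (By Proposition~\ref{prop:well-orderings-rigid}, once such a $\bar Y$ exists, the well-orderings on the missing fibers can be chosen arbitrarily and the extension is still canonical on the old simplices, so rigidity is not an obstruction here — we just need the underlying fibration plus well-orderings on new fibers.) So the essential content is: \emph{every $\alpha$-small Kan fibration over a horn $\Lambda^k[n]$ extends to an $\alpha$-small Kan fibration over $\Delta[n]$.}

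The key step is to produce this extension. The horn inclusion $i \colon \Lambda^k[n] \hookrightarrow \Delta[n]$ is an anodyne map, in particular a trivial cofibration; moreover it admits a retraction $r \colon \Delta[n] \to \Lambda^k[n]$ (the horn is a retract of the simplex — indeed there is a deformation retraction). First I would take $r^* Y \to \Delta[n]$: this is an $\alpha$-small Kan fibration over $\Delta[n]$ whose restriction along $i$ is $(ri)^* Y = Y$ on the nose (since $ri = \mathrm{id}_{\Lambda^k[n]}$). Well-order the new fibers arbitrarily. This already gives an extension, so in fact the theorem reduces to the purely combinatorial facts that the horn inclusion is split mono (which is elementary) and that pullback of a Kan fibration is a Kan fibration, together with smallness being preserved under pullback. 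One then transports the well-ordered structure and applies the representability of $\UU$ to get the required filler $\Delta[n] \to \UU$.

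The main subtlety — and the point I expect to be the real obstacle — is the \emph{coherence of well-orderings and the size bound}: we need the resulting filler to actually land in $\UU$ and to restrict correctly, i.e.\ the isomorphism class we produce over $\Delta[n]$ must pull back to the given class over $\Lambda^k[n]$. Using $r^* Y$ with the well-orderings on old fibers literally inherited via the bijection $(r^*Y)_x \cong Y_{r(x)} = Y_x$ for $x$ in the image of $i$ makes this strict, so this is manageable. A cleaner and more conceptual alternative, which I would mention, is to avoid choosing a retraction: instead use that $\UU$ has the right lifting property against $i$ because $i$ is a trivial cofibration and one can run the standard argument that $p_\alpha \colon \UUt \to \UU$ being a fibration (Lemma~\ref{U:Kan_fib}) forces $\UU$ to be fibrant — but this last implication is false in general (fibrancy of a map does not imply fibrancy of its codomain), so the retraction argument, or an explicit extension argument, is genuinely needed. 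I therefore expect the written proof to hinge on the explicit observation that horn inclusions are split monomorphisms and that $\alpha$-small Kan fibrations are stable under pullback; everything else is bookkeeping with isomorphism classes and well-orderings.
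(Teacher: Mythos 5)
Your argument has a fatal error at its core: the horn inclusion $i \colon \Lambda^k[n] \hookrightarrow \Delta[n]$ does \emph{not} admit a simplicial retraction $r \colon \Delta[n] \to \Lambda^k[n]$ for $n \geq 2$.  You are conflating the topological fact that $|\Lambda^k[n]|$ is a deformation retract of $|\Delta[n]|$ with a statement in $\sSets$ that is simply false.  To see this concretely: a retraction $r$ would have to send the non-degenerate $n$-simplex $\iota_n$ of $\Delta[n]$ to a necessarily degenerate $n$-simplex $s_j\sigma$ of $\Lambda^k[n]$, while fixing all the faces $d_i\iota_n$ with $i \neq k$.  But $d_i(s_j\sigma)$ is degenerate for all $i \notin \{j, j+1\}$, and equals $\sigma$ for $i \in \{j,j+1\}$; so at most two of the $d_i\iota_n$ could be reproduced as faces of $r(\iota_n)$, and even those two would be forced to coincide.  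Already for $n=2$, $k=0$ this gives a contradiction, since $d_1\iota_2$ and $d_2\iota_2$ are distinct non-degenerate edges.  More generally, anodyne maps are trivial cofibrations, and a trivial cofibration that is split mono has a contractible complement in a strong sense — horns have none.  So $r^*Y$ does not exist and the proposed extension collapses.

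The theorem is therefore genuinely homotopy-theoretic, not bookkeeping, and the paper's proof reflects this.  It does reduce, as you correctly observe, to the statement that every $\alpha$-small Kan fibration over $\Lambda^k[n]$ extends to one over $\Delta[n]$; but the extension is produced by a three-step argument.  First, Quillen's Lemma factors the given fibration $q \colon Y \to \Lambda^k[n]$ as a trivial fibration followed by a minimal fibration $q_m \colon Y_0 \to \Lambda^k[n]$.  Second, since $\Lambda^k[n]$ is contractible, the Barratt--Gugenheim--Moore lemma trivializes $q_m$ as a product $F \times \Lambda^k[n]$, which trivially extends to $F \times \Delta[n]$.  Third, Joyal's lemma (Lemma~\ref{lemma:joyal-lemma}) — that $\alpha$-small trivial fibrations extend along cofibrations, proved using the right adjoint $\toposPi_i$ — extends the trivial fibration piece.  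Composing gives the desired $\alpha$-small fibration over $\Delta[n]$; regularity of $\alpha$ ensures the composite is still $\alpha$-small, and then the well-orderings are extended by choice.  Your handling of the well-ordering bookkeeping and of the representability of $\UU$ is fine and matches the paper; the gap is entirely in the construction of the extension, where you need minimal fibration theory and $\toposPi_i$ rather than a nonexistent retraction.

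You also correctly note in your last paragraph that Lemma~\ref{U:Kan_fib} (fibrancy of $p_\alpha$) does not by itself imply fibrancy of $\UU$ — good instinct — but the conclusion you draw, that this pushes you back onto the retraction argument, is therefore not available either.
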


Before proceeding with the proof we will gather four useful lemmas.  The first two concern \emph{minimal fibrations}, which for the present purposes are a technical device whose details, beyond these two lemmas, are unimportant.

\begin{lemma}[Quillen's Lemma, {\cite{quillen:minimal}}] \label{Quillen_lemma}
Any fibration $f \colon Y \to X$ may be factored as $f = pg$, where $p$ is a minimal fibration and $g$ is a trivial fibration.
\end{lemma}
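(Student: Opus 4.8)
This is the classical Quillen minimal-fibration theorem; the plan is to carve a minimal fibration out of $f \colon Y \to X$ as a fiberwise strong deformation retract. Recall that a fibration $p \colon Z \to X$ is \emph{minimal} when any two $n$-simplices of $Z$ which agree on $\partial \Delta [n]$, have equal image in $X$, and are fiberwise homotopic rel $\partial \Delta [n]$ over $X$, are already equal. So I will produce a subcomplex $Z \subseteq Y$ such that $p := f|_Z \colon Z \to X$ is minimal and the inclusion admits a fiberwise deformation retraction $g \colon Y \to Z$; then $f = pg$, and it remains only to check that $g$ is a trivial fibration.

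First I would set up the relation $\sim$ on simplices of $Y$: put $x \sim y$ (for $x, y \in Y_n$) iff $x$ and $y$ agree on $\partial \Delta [n]$, have the same image in $X$, and are joined by a homotopy $\Delta [n] \times \Delta [1] \to Y$ lying over $f x \circ \pi_1$ and constant on $\partial \Delta [n]$. The Kan condition on $f$ lets one reverse and splice such homotopies, so $\sim$ is an equivalence relation. Next, by Zorn's Lemma choose $Z \subseteq Y$ maximal among subcomplexes no two of whose simplices are $\sim$-related; maximality together with horn-filling in $f$ shows that $f|_Z$ is still a Kan fibration, and its minimality is immediate from the choice of $Z$.

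Then I would build the retraction. By induction over the nondegenerate simplices of $Y$, construct a homotopy $H \colon Y \times \Delta [1] \to Y$ over $X$ with $H_0 = \mathrm{id}_Y$, constant along $Z \times \Delta [1]$, and with $H_1$ landing in $Z$: for a nondegenerate $y \notin Z$ one extends the homotopy already defined on $\partial y \times \Delta [1] \cup y \times \{0\}$ by Kan-filling, taking the terminal value $H_1(y)$ to be the (unique, by minimality) representative in $Z$ of the relevant $\sim$-class. Setting $g := H_1$, this is a retraction of $Y$ onto $Z$ over $X$, exhibited by $H$ as a fiberwise strong deformation retract; in particular $g$ is a weak equivalence. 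To see that $g$ is moreover a fibration, I would solve lifting problems against $\partial \Delta [n] \hookrightarrow \Delta [n]$: given $u \colon \partial \Delta [n] \to Y$ and $v \colon \Delta [n] \to Z$ with $g u = v|_{\partial \Delta [n]}$, glue $H|_{\partial \Delta [n] \times \Delta [1]}$ to $v$ (regarded in $Y$) along $\Delta [n] \times \{1\}$, and extend over the anodyne inclusion $(\partial \Delta [n] \times \Delta [1]) \cup (\Delta [n] \times \{1\}) \hookrightarrow \Delta [n] \times \Delta [1]$ using that $f$ is a fibration; the $\{0\}$-end of the resulting prism is a filler $w$ with $w|_{\partial \Delta [n]} = u$, and applying $g$ to the prism gives a fiberwise homotopy rel $\partial \Delta [n]$ over $X$ from $g w$ to $v$, so minimality of $p$ forces $g w = v$.

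The hard part will be the two inductive constructions: arranging the representative-choices in $Z$ and, especially, the filling homotopy $H$ so that everything glues coherently over the skeleta, and — for the last step to go through — so that the comparison homotopy $g w \homot v$ is genuinely constant on $\partial \Delta [n]$. This is the only place where minimality is really exploited, and it is why $H$ must be built with some care rather than by a blind application of the Kan condition; every other step is a routine horn- or anodyne-filling argument.
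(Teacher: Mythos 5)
The paper does not prove this lemma: it is quoted as a black box, cited to Quillen's original paper on minimal fibrations, and the references \cite{goerss-jardine}, \cite{may:simplicial-book}, \cite{hovey:book} all contain full treatments. So there is no ``paper's own proof'' to compare against; what you have written is a sketch of the standard classical argument (choose a $\sim$-transversal subcomplex $Z$, build a fibrewise strong deformation retraction $H$ onto it, check that $r = H_1$ is a trivial fibration). Two remarks on that sketch.

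First, a minor point: the construction of $Z$ is usually done by skeletal induction rather than Zorn's lemma, because one needs $Z$ to be a subcomplex and, having fixed a transversal in dimensions $< n$, the representatives of the dimension-$n$ classes must be chosen compatibly with face and degeneracy maps; the Zorn approach can be made to work, but the maximality argument needs more care than the one-sentence version suggests. Second, and more seriously: in the last step the homotopy $gK$ from $gw$ to $v$ is \emph{not} rel $\partial\Delta[n]$, since on $\partial\Delta[n]\times\Delta[1]$ it restricts to $g\cdot H(u,\cdot)$, which is a possibly nonconstant loop at $gu$. You flag this and propose to fix it by building $H$ so that $g\cdot H_t = g$, i.e.\ so that the retraction moves points only within $g$-fibers. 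But that requirement is circular: the inductive step would need to fill horns constrained to lie in the fibres of $g$, which presupposes exactly the fibration property of $g$ that you are trying to establish. The actual repair is different: one observes that $g\cdot H(y,\cdot)$ is nullhomotopic rel endpoints via the square $g\cdot H(H(y,s),t)$ (three of whose sides are $g\cdot H(y,\cdot)$ and one constant, using $g\iota = \mathrm{id}$ and $H|_Z$ constant), and uses this nullhomotopy to correct $gK$ to a genuine rel-$\partial\Delta[n]$ homotopy before invoking minimality. Without that correction, the final appeal to minimality does not go through.
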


\begin{lemma}[{\cite[III.5.6]{barratt-gugenheim-moore}}; see also {\cite[Cor.~11.7]{may:simplicial-book}}] \label{lemma:may-lemma}
Suppose $X$ is contractible, with $x_0 \in X$, and $p \colon Y \to X$ is a minimal fibration with fiber $F := Y_{x_0}$. Then there is an isomorphism over $X$:
\[\xymatrix@C=0.5cm{
  Y \ar[rr]^<>(0.5)g \ar[rd]_p & & F \times X \ar[ld]^{\pi_2} \\
  & X. &
}\]
\end{lemma}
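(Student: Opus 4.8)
The plan is to deduce this classical statement from two standard facts about minimal fibrations. The first is \emph{homotopy invariance of fibrations}: since $X$ is contractible, $q \colon X \to \Delta[0]$ is a homotopy equivalence with homotopy inverse $x_0 \colon \Delta[0] \to X$, and consequently $p \colon Y \to X$ is fiber homotopy equivalent over $X$ to the pullback $q^*(x_0^* p)$, which is the projection $\pi_2 \colon F \times X \to X$ (note $x_0^* Y = Y_{x_0} = F$). The second is \emph{rigidity of minimal fibrations}: two minimal fibrations over a common base that are fiber homotopy equivalent over that base are already isomorphic over it. Granting these, the lemma is immediate once we observe that $F$, being the fiber of a minimal fibration, is a minimal Kan complex, and hence that $\pi_2 \colon F \times X \to X$ is itself a minimal fibration: then $p$ and $\pi_2$ are fiber homotopy equivalent minimal fibrations over $X$, so they are isomorphic over $X$, and any such isomorphism serves as the map $g$.

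Homotopy invariance I would handle in the usual way. A homotopy $x_0 \circ q \homot \mathrm{id}_X$ is a map $H \colon X \times \Delta[1] \to X$; pulling $p$ back along it yields a fibration over $X \times \Delta[1]$ whose restrictions over the two ends $X \times \{0\}$ and $X \times \{1\}$ are $\pi_2 \colon F \times X \to X$ and $p \colon Y \to X$. So it suffices to see that an arbitrary fibration over $X \times \Delta[1]$ has fiber homotopy equivalent restrictions over the two ends. This is standard: $X \times \Delta[1]$ strong-deformation-retracts onto $X \times \{0\}$, fibrations enjoy the covering homotopy property, so such a fibration is fiber homotopy equivalent over $X \times \Delta[1]$ to a cylinder on its restriction over $X \times \{0\}$, and restricting to the two ends gives the claim.

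The real work---and the step I expect to be the main obstacle---is rigidity, which is where minimality, rather than mere fibrancy, is indispensable. Let $f \colon E \to E'$ be a fiber homotopy equivalence over a base $B$ between minimal fibrations, with homotopy inverse $u$. Then $uf \colon E \to E$ and $fu \colon E' \to E'$ are self-maps over $B$ fiber-homotopic over $B$ to the respective identities, so it is enough to prove the key special case: a self-map $\alpha$ of a minimal fibration $E \to B$, lying over $B$ and fiber-homotopic over $B$ to $\mathrm{id}_E$, is an isomorphism (applied to $uf$ and $fu$ this shows $f$ is injective and surjective on simplices, hence an isomorphism). The special case is proved by induction on dimension: granting that $\alpha$ is bijective below dimension $n$, one uses the chosen fiber homotopy $\alpha \homot \mathrm{id}_E$---after rectifying it to be constant on $\partial\Delta[n]$, which is possible by the inductive hypothesis together with the minimality of $E$---to conclude that $\alpha$ is bijective on $n$-simplices as well. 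Carrying out this rectification carefully, so that all the homotopies are simultaneously relative to the boundary and compatible with the face and degeneracy operators, is the technical heart of the theory of minimal fibrations; I would refer to \cite{barratt-gugenheim-moore} and \cite{may:simplicial-book} for these details rather than reproduce them.
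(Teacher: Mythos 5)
The paper does not give a proof of this lemma---it is cited as a classical result from Barratt--Gugenheim--Moore and May---so there is no in-text argument to compare against. Your sketch is correct and is in fact the standard decomposition that those references use: (i) a fibration over a contractible base is fiber homotopy equivalent over the base to the product $F \times X$ (homotopy invariance, via $x_0 \circ q \homot \mathrm{id}_X$); (ii) the fiber of a minimal fibration is minimal, hence so is $\pi_2 \colon F \times X \to X$; and (iii) two minimal fibrations over the same base that are fiber homotopy equivalent over it are isomorphic over it (rigidity). Your reduction of rigidity to the case of a self-map fiber-homotopic to the identity, proved by induction on dimension after rectifying the homotopy to be relative to the boundary, is exactly where minimality enters, and you are right to identify that rectification as the technical core and to defer its details to \cite{barratt-gugenheim-moore} and \cite{may:simplicial-book}. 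No gaps.
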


For Lemma~\ref{lemma:joyal-lemma}, the proof we give is due to Andr\'e Joyal; we include details here since the original \cite{joyal:kan} is not currently publicly available.  For this, and again for Theorem~\ref{thm:univalence} below, we make crucial use of exponentiation along cofibrations; so we pause first to establish some facts about this.

\begin{lemma}[Cf.~{\cite[Lemma~0.2]{joyal:kan}}] \label{lemma:exp_along_cofib}
For any map $i \colon A \to B$,
\begin{enumerate}[1.]
\item $\toposPi_i \colon \sSets/A \to \sSets/B$ preserves trivial fibrations;
\end{enumerate}
\noindent and if moreover $i$ is a cofibration, then: 
\begin{enumerate}[1.] \setcounter{enumi}{1}
\item the counit $i^* \toposPi_i \to 1_{\sSets/A}$ is an isomorphism;
\item if $p \colon E \to A$ is $\alpha$-small, then so is $\toposPi_i p$. \end{enumerate}
\end{lemma}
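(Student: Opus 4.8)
The plan is to treat the three parts in turn, each reducing to a transposition across the adjunction $i^{*} \adjoint \toposPi_{i}$ (between $\sSets/B$ and $\sSets/A$) together with one elementary fact about monomorphisms. For part~1, recall that a map of simplicial sets is a trivial fibration exactly when it has the right lifting property against all monomorphisms, and that this property is unaffected by passing to a slice. Given a lifting problem for $\toposPi_{i}q$ against a monomorphism $m \colon S \to T$ over $B$, I would transpose it to a lifting problem for $q$ against $i^{*}m \colon i^{*}S \to i^{*}T$ over $A$; since $i^{*}m$ is a pullback of $m$ it is again a monomorphism, so the trivial fibration $q$ supplies a diagonal filler, and transposing back solves the original problem. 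Note this argument does not use that $i$ is a cofibration.

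For part~2, the relevant fact is this: if $i \colon A \to B$ is a monomorphism and $t \colon T \to A$ is any object of $\sSets/A$, then pulling $T$ back along $i$ --- i.e.\ forming $i^{*}(i \cdot t \colon T \to B) = A \times_{B} T$ --- recovers $T$, because a map into $B$ that factors through the mono $i$ has a unique lift through $i$, so the evident section of the projection $A \times_{B} T \to T$ is also a retraction. Unwinding the adjunction, the generalized elements $(T \to A) \to i^{*}\toposPi_{i}F$ are the maps $i^{*}T \to F$ over $A$, which by this fact are just the maps $T \to F$ over $A$, i.e.\ the elements $(T \to A) \to F$; the resulting natural bijection is precisely the counit, so $i^{*}\toposPi_{i} \to 1$ is an isomorphism (equivalently, $\toposPi_{i}$ is fully faithful). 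Alternatively one may reduce to the case where $i$ is the inclusion of a simplicial subset $A \subseteq B$ and verify the isomorphism directly on simplices. This is the step that needs the most care: one must keep straight that $i^{*}\toposPi_{i}$ and its counit are formed in the slices over $B$ and over $A$, and recognise that the monomorphism hypothesis is exactly what makes a pullback along $i$ of an object already living over $A$ collapse.

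For part~3, assume $i$ is a cofibration; reducing as above we may take $A \subseteq B$. The fibre of $\toposPi_{i}p \colon \toposPi_{i}E \to B$ over a simplex $x \colon \Delta[n] \to B$ is, via the adjunction, the set of sections of the restriction of $p$ along $S \to A$, where $S := x^{-1}(A) \subseteq \Delta[n]$; this restriction is again $\alpha$-small. By the Eilenberg--Zilber lemma a section is determined by its values on nondegenerate simplices, and $S$, being a simplicial subset of $\Delta[n]$, has only finitely many of these, so the set of sections injects into a finite product of fibres of $p$, each of cardinality $< \alpha$. Since $\alpha$ is infinite, a finite product of cardinals each $< \alpha$ is still $< \alpha$; hence $\toposPi_{i}p$ is $\alpha$-small.

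In summary, parts~1 and~3 are short adjunction-and-bookkeeping arguments (part~3 using in addition only that an infinite cardinal is closed under finite products), and the single genuine subtlety lies in correctly setting up the counit computation of part~2, where the monomorphism hypothesis is indispensable.
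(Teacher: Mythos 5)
Your proof is correct and follows essentially the same route as the paper's: part~1 by transposing lifting problems against monomorphisms across the adjunction (the paper phrases this as ``$i^*$ preserves cofibrations''), part~2 by reducing to the fact that $i$ mono makes $i^*\toposSigma_i\iso 1$ and transferring across the adjoint triple (your Yoneda/generalized-element computation is just an unwinding of this), and part~3 by identifying the fiber of $\toposPi_i p$ over $x\colon\Delta[n]\to B$ with maps $i^*x\to E$ over $A$ and bounding these by the finitely many nondegenerate simplices of $i^*x\subseteq\Delta[n]$.
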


\begin{proof} \
\begin{enumerate}[1.]
\item By adjunction, since $i^*$ preserves cofibrations.

\item Since $i$ is mono, $i^* \toposSigma_i \iso 1_{\sSets/A}$; so by adjointness, $i^*\toposPi_i \iso 1_{\sSets/A}$.

\item For any $n$-simplex $x \colon \Delta[n] \to B$, we have $(\toposPi_i p)_x \iso \Hom_{\sSets/B}(x,\toposPi_i p) \iso \Hom_{\sSets/B}(i^*x,p)$.  As a subobject of $\Delta[n]$, $i^*x$ has only finitely many non-degenerate simplices, so $(\toposPi_i p)_x$ injects into a finite product of fibers of $p$ and is thus of size $< \alpha$. \qedhere
\end{enumerate} \end{proof}

\begin{lemma}[{\cite[Lemma~0.2]{joyal:kan}}] \label{lemma:joyal-lemma}
Trivial fibrations extend along cofibrations.  That is, if $t \colon Y \to X$ is a trivial fibration and $j \colon X \to X'$ is a cofibration, then there exists a trivial fibration $t' \colon Y' \to X'$ and a pullback square of the form:
 \[\xymatrix{ Y \ar@{.>}[r] \ar[d]_t \pb & Y' \ar@{.>}[d]^{t'} \\
 X \ar@{^{(}->}[r]^j & X'.
 }\]

Moreover, if $t$ is $\alpha$-small, then $t'$ may be chosen to also be.
\end{lemma}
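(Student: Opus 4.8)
The plan is to obtain $Y'$ and $t'$ directly as a dependent product along $j$, so that the whole statement becomes a corollary of Lemma~\ref{lemma:exp_along_cofib}. Regarding $t \colon Y \to X$ as an object of $\sSets/X$, I would set
\[ Y' := \toposPi_j Y, \qquad t' := \toposPi_j\bigl(t \colon Y \to X\bigr) \colon Y' \to X', \]
using the right adjoint $\toposPi_j \colon \sSets/X \to \sSets/X'$ to $j^* \colon \sSets/X' \to \sSets/X$ (which exists since $\sSets$ is locally cartesian closed). By part~1 of Lemma~\ref{lemma:exp_along_cofib}, which needs no hypothesis on $j$, the map $t'$ is again a trivial fibration.

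Next I would verify that the square
\[\xymatrix{ Y \ar[r] \ar[d]_t \pb & Y' \ar[d]^{t'} \\ X \ar@{^{(}->}[r]^j & X' }\]
in which the top arrow is the canonical composite $Y \iso j^* Y' \to Y'$, is a pullback. Here the isomorphism $Y \iso j^* Y'$ is (the inverse of) the counit $\epsilon_Y \colon j^* \toposPi_j Y \to Y$ of the adjunction $j^* \dashv \toposPi_j$, which is an isomorphism over $X$ by part~2 of Lemma~\ref{lemma:exp_along_cofib}, precisely because the cofibration $j$ is a monomorphism. Thus $Y$ is identified, over $X$, with $j^* Y' = X \times_{X'} Y'$, which is exactly the assertion that the displayed square is a pullback.

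Finally I would note that if $t$ is $\alpha$-small then so is $t' = \toposPi_j t$, by part~3 of Lemma~\ref{lemma:exp_along_cofib} (again using that $j$ is a cofibration); this completes the construction. In effect there is no real obstacle here beyond Lemma~\ref{lemma:exp_along_cofib} itself: the only point deserving a second glance is the observation that ``the square is a pullback'' is the same thing as ``the counit of $j^* \dashv \toposPi_j$ is invertible'', which is where the hypothesis that $j$ is a cofibration (monomorphism) is used.
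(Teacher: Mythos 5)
Your argument is exactly the paper's: set $t' := \toposPi_j t$ and invoke parts 1, 2, and 3 of Lemma~\ref{lemma:exp_along_cofib} for the trivial-fibration, pullback, and $\alpha$-smallness claims respectively. The proposal is correct and matches the paper's proof, just spelled out in slightly more detail.
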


\begin{proof}
Take $t' := \toposPi_j t$.  By part 1 of Lemma~\ref{lemma:exp_along_cofib}, this is a trivial fibration; by part 2, $j^*Y' \iso Y$; and by part 3, it is $\alpha$-small.
\end{proof}

We are now ready to prove that $\UU$ is a Kan complex.

\begin{proof}[Proof of Theorem~\ref{U:KanCpx}]
We need to show that we can extend any horn in $\UU$ to a simplex:
 \[\xymatrix{ \Lambda^k[n] \ar[r] \ar@{^{(}->}[d] & \UU \\
  \Delta [n] \ar@{.>}[ur] & \\
 }\]
By Corollary~\ref{cor:U_classifies}, any such horn $\name{q}$ corresponds to an $\alpha$-small well-ordered fibration $q \colon Y \to \Lambda^k [n]$.  To extend $\name{q}$ to a simplex, we just need to construct an $\alpha$-small fibration $Y'$ over $\Delta[n]$ which restricts on the horn to $Y$:
\[\xymatrix{
  Y \ar@{.>}[r] \ar[d]_q \pb & Y' \ar@{.>}[d]^{q'} \\
 \Lambda^k[n] \ar@{^{(}->}[r] & \Delta[n].
}\]
By the axiom of choice one can then extend the well-ordering of $q$ to $q'$, so the map $\name{q'} \colon \Delta [n] \to \UU$ gives the desired simplex.

By Quillen's Lemma, we can factor $q$ as
\[\xymatrix{ Y \ar[r]^{q_t} & Y_0 \ar[r]^{q_m} & \Lambda^k[n],}\]
where $q_t$ is a trivial fibration and $q_m$ is a minimal fibration.  Both are still $\alpha$-small: each fiber of $q_t$ is a subset of a fiber of $q$, and since a trivial fibration is onto, each fiber of $q_m$ is a quotient of a fiber of $q$.

By Lemma~\ref{lemma:may-lemma}, we have an isomorphism $Y_0 \cong F \times \Lambda^k[n]$, and hence a pullback diagram:
 \[\xymatrix{Y_0 \ar@{^{(}->}[r] \ar[d] \pb & F \times \Delta[n] \ar[d] \\
 \Lambda^k[n] \ar@{^{(}->}[r] & \Delta[n]
 }\]

By Lemma~\ref{lemma:joyal-lemma}, we can then complete the upper square in the following diagram, with both right-hand vertical maps $\alpha$-small fibrations:
 \[\xymatrix{ Y \ar[d]_{q_t} \ar[r] \pb & Y' \ar[d] \\
  Y_0 \ar@{^{(}->}[r] \ar[d]_{q_m} \pb & F \times \Delta[n] \ar[d] \\
 \Lambda^k[n] \ar@{^{(}->}[r] & \Delta[n]
 }\]

  Since $\alpha$ is regular, the composite of the right-hand side is again $\alpha$-small; so we are done.
\end{proof}

\subsection{Modelling type theory in simplicial sets} \label{subsec:model-in-ssets}

To prove that $\UU$ carries the structure to model type theory, we will need a couple of further lemmas; firstly, that taking dependent products preserves fibrations:

\begin{lemma} \label{lemma:dep-prod-of-fibs}
Suppose $Z \to^q Y \to^p X$ are fibrations.  Then the dependent product $\toposPi_p q$ is a fibration over $X$.
\end{lemma}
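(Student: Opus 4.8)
The plan is to verify directly that $\toposPi_p q \colon \toposPi_p Z \to X$ has the right lifting property against every horn inclusion $j \colon \Lambda^k[n] \hookrightarrow \Delta[n]$. A lifting problem for $j$ against $\toposPi_p q$ consists of a map $b \colon \Delta[n] \to X$ together with a map $\Lambda^k[n] \to \toposPi_p Z$ over $b j$; equivalently, a morphism in $\sSets/X$ from the object $(b j \colon \Lambda^k[n] \to X)$ to $(\toposPi_p Z \to X)$, of which we seek an extension along $j$ to $(b \colon \Delta[n] \to X)$. First I would transpose this across the adjunction $p^* \dashv \toposPi_p \colon \sSets/X \rightleftarrows \sSets/Y$. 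Since $p^*$ is pullback along $p$, it carries $b j \colon \Lambda^k[n] \to X$ to the projection $\Lambda^k[n] \times_X Y \to Y$ and $b \colon \Delta[n] \to X$ to $\Delta[n] \times_X Y \to Y$; so the transposed problem is precisely a lifting problem
\[\xymatrix{
  \Lambda^k[n] \times_X Y \ar[r] \ar@{^{(}->}[d] & Z \ar[d]^q \\
  \Delta[n] \times_X Y \ar[r] \ar@{.>}[ur] & Y
}\]
against the fibration $q$.

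The left-hand vertical map here is the pullback of the horn inclusion $j$ along the projection $\Delta[n] \times_X Y \to \Delta[n]$, which is itself a fibration, being the pullback of the fibration $p \colon Y \to X$ along $b$. Thus the left-hand map is the pullback of a trivial cofibration along a fibration, and I claim it is again a trivial cofibration: it is a monomorphism, since monomorphisms are stable under pullback (and in $\sSets$ these are exactly the cofibrations), and it is a weak equivalence by right properness of the Quillen model structure on $\sSets$, being the pullback of the weak equivalence $j$ along this fibration. A fibration has the right lifting property against every trivial cofibration, so the square above has a diagonal filler; transposing it back across $p^* \dashv \toposPi_p$ gives the required filler of the original square.

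The only substantive ingredient is the claim that the pullback of a trivial cofibration along a fibration is a trivial cofibration, so right properness of $\sSets$ is the real input; everything else is formal manipulation of the adjunction $p^* \dashv \toposPi_p$. Indeed, this is precisely the statement that $(p^*, \toposPi_p)$ is a Quillen adjunction between the slice model structures — $p^*$ always preserves cofibrations, and preserves trivial cofibrations exactly because $p$ is a fibration — so that $\toposPi_p$, being right Quillen, preserves fibrations; the lemma then follows by applying this to $q$ viewed as a fibration in $\sSets/Y$. I would present the explicit horn-filling argument above and record this Quillen-adjunction reformulation as a remark.
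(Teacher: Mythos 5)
Your proof is correct, and its closing paragraph is essentially the paper's argument verbatim: $p^*$ preserves trivial cofibrations by right properness together with stability of monomorphisms under pullback, so the right adjoint $\toposPi_p$ preserves fibrations. The explicit horn-filling computation in the first part is just that Quillen-adjunction argument unfolded, so this is the same approach rather than an alternative one.
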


\begin{proof}
The pullback functor $p^* \colon \sSets/X \to \sSets/Y$ preserves trivial cofibrations (since $\sSets$ is right proper and cofibrations are monomorphisms); so its right adjoint $\toposPi_p$ preserves fibrant objects.
\end{proof}

Secondly, to model $\synId$-types, we will require well-behaved fibered path objects.  The construction below may be found in \cite[Thm.~2.25]{warren:thesis}; we recall it in more elementary terms, which will be useful to us later.

\begin{definition}
Given a fibration $p \colon E \to B$, define the \emph{fibered path object} $\paths_B(E)$ as the pullback
\[\xymatrix{
  \paths_B(E) \ar[r] \ar[d] \pb & E^{\Delta[1]} \ar[d]^-{p^{\Delta[1]}} \\
  B \ar[r]^-{c} & B^{\Delta[1]},
}\]
the object of paths in $E$ that are constant in $B$.

The “constant path” map $c \colon E \to E^{\Delta[1]}$ factors through $\paths_B(E)$; call the resulting map $r_p \colon E \to \paths_B(E)$.  There are also evident source and target maps $s_p,t_p \colon \paths_B(E) \to E$.  (On all of these maps, we will omit the subscripts when they are clear from context.)
\end{definition}

\begin{proposition} \label{prop:stable-fibered-path-obs}
For any fibration $p \colon E \to B$, the maps 
\[E \to^r \paths_B(E) \to^{(s,t)} E \times_B E\]
give a factorisation of the diagonal map $\Delta_p \colon E \to E \times_B E$ over $B$ as a (trivial cofibration, fibration); and this is \emph{stable over $B$} in that the pullback along any $B' \to B$ is again such a factorisation. 
\end{proposition}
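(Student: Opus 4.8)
The plan is to recognise $E\xrightarrow{r}\paths_B(E)\xrightarrow{(s,t)}E\times_B E$ as a path object for the fibrant object $E$ in the slice model structure on $\sSets/B$, and to do so by checking the three defining properties directly: that $(s,t)\cdot r=\Delta_p$, that $r$ is a trivial cofibration, and that $(s,t)$ is a fibration. Stability will then follow essentially for free, because the construction of $\paths_B(E)$ is assembled entirely from finite limits and the cotensor $(-)^{\Delta[1]}$, all of which commute with pullback.

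The identity $(s,t)\cdot r=\Delta_p$ is immediate from the definitions, since $r$ picks out constant paths, whose two endpoints agree. To see that $r$ is a trivial cofibration, I would first show that $s\colon\paths_B(E)\to E$ is a trivial fibration. The point is that the Leibniz cotensor of the anodyne inclusion $\{0\}\hookrightarrow\Delta[1]$ with the fibration $p$, namely the map
\[ E^{\Delta[1]}\longrightarrow E\times_B B^{\Delta[1]} \]
(evaluation at the vertex $0$ together with the induced path $p^{\Delta[1]}$ in $B$), is a trivial fibration, by the pushout--product axiom of the simplicial model structure on $\sSets$; and $s$ is exactly the pullback of this map along the map $(1_E,\,c\cdot p)\colon E\to E\times_B B^{\Delta[1]}$. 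Granting this, $s$ is in particular a weak equivalence, so from $s\cdot r=1_E$ the two-out-of-three property shows that $r$ is a weak equivalence, while $s\cdot r=1_E$ being monic forces $r$ to be monic, hence a cofibration. Thus $r$ is a trivial cofibration.

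For the fibration claim I would exhibit $(s,t)$ as a pullback of the Leibniz cotensor of the cofibration $\partial\Delta[1]\hookrightarrow\Delta[1]$ with $p$, namely
\[ E^{\Delta[1]}\longrightarrow (E\times E)\times_{B\times B}B^{\Delta[1]}, \]
which is a fibration again by the pushout--product axiom. Concretely, pulling this map back along the evident map $E\times_B E\to (E\times E)\times_{B\times B}B^{\Delta[1]}$---whose two components are the inclusion $E\times_B E\hookrightarrow E\times E$ and the composite $E\times_B E\to B\xrightarrow{c}B^{\Delta[1]}$ of the common structure map with the constant-path map---yields, after unwinding, precisely the object of paths in $E$ whose image in $B$ is constant, with its projection to $E\times_B E$ equal to $(s,t)$. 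So $(s,t)$ is a fibration. This is the hard part of the argument: since $E$ itself need not be a Kan complex---only $p$ is assumed to be a fibration---one cannot simply invoke that $E^{\Delta[1]}\to E\times E$ is a fibration, and must instead argue relatively, via the Leibniz-cotensor form of the pushout--product axiom. Everything else is routine manipulation with adjunctions and two-out-of-three.

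For stability along a map $g\colon B'\to B$, write $E'=g^{*}E$, so $p'\colon E'\to B'$ is again a fibration. Since $(-)^{\Delta[1]}$ is a right adjoint it preserves pullbacks, so $(E')^{\Delta[1]}\iso E^{\Delta[1]}\times_{B^{\Delta[1]}}(B')^{\Delta[1]}$ over $(B')^{\Delta[1]}$; combining this with the naturality of the constant-path map $c$, a short diagram chase identifies $\paths_{B'}(E')$ with $g^{*}\paths_B(E)$ compatibly with $r$, $s$, and $t$. Hence the pullback along $g$ of the factorisation for $p$ is the corresponding factorisation for $p'$, which is what is asserted.
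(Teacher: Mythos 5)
Your proof is correct and follows essentially the same route as the paper: identify $(s,t)$ (resp.\ $s$) as a pullback of the Leibniz cotensor of the cofibration $\partial\Delta[1]\hookrightarrow\Delta[1]$ (resp.\ $\{0\}\hookrightarrow\Delta[1]$) with the fibration $p$, invoke the monoidal model category axioms, deduce $r$ is a trivial cofibration by 2-out-of-3 and the retraction $s$, and note stability from the fact that the construction is built from limits and the $\Delta[1]$-cotensor.
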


\begin{proof}
It is clear that these maps give a factorisation of $\Delta_p$ over $B$.  To see that they are a trivial cofibration and a fibration respectively, consider the pullback construction of $\paths_B(E)$ via two intermediate stages:
\[\xymatrix{
  \paths_B(E) \ar[d]_-{(s,t)} \ar[r] \pb & E^{\Delta[1]} \ar[d]^-{(s,p^{\Delta[1]},t)} \\
  E \times_B E \ar[d]_-{\pi_1} \ar[r] \pb & E \times_B B^{\Delta[1]} \times_B E \ar[d]^-{(\pi_1,\pi_2)} \\
  E \ar[d] \ar[d] \ar[r] \pb & E \times_B B^{\Delta[1]} \ar[d] \\
  B \ar[r]^{c} & B^{\Delta[1]} 
}\]

Now $(s,t)$ is certainly a fibration, since it is a pullback of the map $E^{\Delta[1]} \to E \times_B B^{\Delta[1]} \times_B E \iso E^{1+1} \times_{B^{1+1}} B^{\Delta[1]}$, which is a fibration by the monoidal model category axioms \cite[Lemma~4.2.2(3)]{hovey:book}, applied to the cofibration $1+1 \to \Delta[1]$ and the fibration $p$.

Similarly, the source map $s \colon \paths_B(E) \to E$ is a trivial fibration, since it is a pullback of $E^{\Delta[1]} \to E^1 \times_{B^1} B^{\Delta[1]}$, which is one by the monoidal model category axioms.  But $s$ is a retraction of $r$, so $r$ is a weak equivalence (by 2-out-of-3) and a monomorphism, so is a trivial cofibration as desired.

Finally, stability of these properties under pullback follows immediately from the stability (up to isomorphism) of the construction itself: for any $f \colon B' \to B$, there is a canonical isomorphism $\paths_{B'} (f^*E) \iso f^* \paths_B(E)$, commuting with the maps $r,s,t$.
\end{proof}

We are now fully equipped for the main result of the present section:
\begin{theorem} \label{thm:the-model-in-ssets}
Let $\alpha$ be an inaccessible cardinal\footnote{I.e.\ infinite, regular, and strong limit; \emph{strongly inaccessible} in some literature.}. Then $\UU$ carries $\PiStrux$-, $\SigmaStrux$-, $\IdStrux$-, $\WStrux$-, $\oneStrux$-, $\zeroStrux$-, and $\plusStrux$-structures.  

Moreover, if $\beta < \alpha$ is also inaccessible, then $\UU[\beta]$ gives an internal universe in $\UU[\alpha]$ closed under all these constructors.
\end{theorem}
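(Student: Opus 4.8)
The plan is to apply the structure-building recipes of Section~\ref{subsec:logical-structure-on-universes} (cf.\ Theorem~\ref{thm:structure-on-U-to-CU}) to the universe $p_\alpha \colon \UUt \to \UU$ in the lccc $\sSets$. For each constructor the associated structure map is obtained by \emph{classifying} a suitable map via Corollary~\ref{cor:U_classifies}: e.g.\ for $\PiStrux$ one forms the dependent product $\toposPi_{\alpha_\gen} B_\gen$ of the generic family over $U^{\synPi}$ and takes $\PiStrux \colon U^{\synPi} \to \UU$ to be a classifying map for it, whence the required isomorphism $\PiStrux^* \UUt \iso \toposPi_{\alpha_\gen} B_\gen$ is part of the classification. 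Thus for each constructor there are exactly two things to check: (a) the relevant categorical operation, applied to (pullbacks of) the fibration $p_\alpha$, again yields a \emph{fibration}, so that it is classifiable; and (b) it stays \emph{$\alpha$-small} --- which is where inaccessibility of $\alpha$ enters, through its being a strong limit (for $\PiStrux$, $\WStrux$) and regular (for $\SigmaStrux$, $\plusStrux$).

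For $\PiStrux$ and $\SigmaStrux$: since $\alpha_\gen \colon A_\gen \to U^{\synPi}$ and $\beta_\gen \colon B_\gen \to A_\gen$ are pullbacks of $p_\alpha$, hence fibrations, Lemma~\ref{lemma:dep-prod-of-fibs} gives that $\toposPi_{\alpha_\gen}B_\gen$ is a fibration, and $\toposSigma_{\alpha_\gen}B_\gen$ is one as a composite of fibrations; $\alpha$-smallness holds because a fibre of the former injects into a product of fewer than $\alpha$ many fibres of $\beta_\gen$ (each of size $<\alpha$, so the product is $<\alpha$ since $\alpha$ is strong limit and regular), and a fibre of the latter is a $(<\alpha)$-indexed sum of $(<\alpha)$-sized sets. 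The structures $\oneStrux,\zeroStrux$ are read off from the $\alpha$-small fibrations $1 \to 1$ and $0 \to 1$, and $\plusStrux$ from the fibration $\pi_1^*\UUt + \pi_2^*\UUt \to \UU \times \UU$ (coproducts of fibrations being fibrations, with fibres of size $<\alpha$). For $\IdStrux$ we use the stable fibered path object of Proposition~\ref{prop:stable-fibered-path-obs} applied to $p_\alpha$: the factorisation $\UUt \xrightarrow{r} \paths_{\UU}(\UUt) \xrightarrow{(s,t)} \UUt \times_{\UU}\UUt = U^{\synId}$ has $(s,t)$ an $\alpha$-small fibration (its fibre over a pair of points of a fibre $F$ of $p_\alpha$ is a path-space in $F$, and $F^{\Delta[1]}$ is fibrewise $\alpha$-small since each of its simplices is determined by finitely many simplices of $F$); classifying $(s,t)$ gives $\IdStrux$ and, transporting $r$, the map $r \colon \UUt \to \IdStrux^* \UUt$ with the triangle commuting by construction. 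Since $r$ is a trivial cofibration \emph{stably over $\UU$}, it is stably orthogonal to $p_\alpha$ and its pullbacks in $\sSets/\UU$, so the lifting operation $J$ is supplied by Proposition~\ref{prop:lifting-op-iff-stably-orthog}.

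The remaining, and principal, difficulty is $\WStrux$. Here one must produce an initial algebra for the polynomial endofunctor $\toposSigma_{\alpha_\gen}\toposPi_{\beta_\gen}\beta_\gen^*\alpha_\gen^*$ of $\sSets/U^{\synW}$, and then show that (the total space of) this initial algebra is a fibration over $U^{\synW}$ which is $\alpha$-small. Existence of the initial algebra is standard: $\sSets$ is locally presentable and the functor is accessible (see \cite{moerdijk-palmgren}, \cite{gambino-hyland}), and the usual transfinite construction of the initial algebra can be carried out within the class of $\alpha$-small fibrations over $U^{\synW}$, using precisely the stability facts established above for $\toposPi$ and $\toposSigma$, with regularity of $\alpha$ bounding both the length of the construction and the sizes of its stages. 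The crux --- the step I expect to be the main obstacle --- is that fibrancy must survive the passage to the colimit: one needs the transfinite composite of the approximating inclusions to be (a pullback of) a fibration, which should follow from closure of fibrations under the relevant filtered colimits along monomorphisms in $\sSets$. Once the initial algebra is known to be an $\alpha$-small fibration, $\WStrux$ is its classifying map.

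For the internal universe, suppose $\beta < \alpha$ are both inaccessible. The inclusion $\W[\beta] \hookrightarrow \W[\alpha]$ of functors induces a monomorphism $\UU[\beta] \hookrightarrow \UU[\alpha]$, and $\UU[\beta]$ is itself $\alpha$-small: a $\beta$-small well-ordered fibration over $\Delta[n]$ has total space of size $<\beta$ (by regularity of $\beta$), so there are at most $2^{\beta}<\alpha$ isomorphism classes of them; moreover $\UU[\beta]$ is a Kan complex by Theorem~\ref{U:KanCpx}. Hence $\UU[\beta] \to 1$ is an $\alpha$-small fibration, classified by some $u_0 \colon 1 \to \UU[\alpha]$ with $u_0^*\UUt \iso \UU[\beta]$; we take $i \colon U_0 := u_0^*\UUt \to \UU[\alpha]$ to be this inclusion, and check that pulling $p_\alpha$ back along $i$ recovers $p_\beta$, so the universe structure $i$ induces on $U_0$ is the standard one. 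For closure under $\synPi$-types, consider the composite $\PiStrux \circ i^{\synPi} \colon U_0^{\synPi} \to \UU[\alpha]$: by Beck--Chevalley it classifies the restriction of $\toposPi_{\alpha_\gen}B_\gen$, which is the $\synPi$-generic family for $U_0$ and, by the same estimates as above with $\beta$ in place of $\alpha$, is $\beta$-small; hence $\PiStrux \circ i^{\synPi}$ factors (uniquely, by strict universality for well-ordered fibrations) through $\UU[\beta] \hookrightarrow \UU[\alpha]$, and the factorisation is the desired $\PiStrux_0$, for which the required square commutes on the nose by construction. The cases of $\synSigma$, $\synW$, $\synOne$, $\synZero$, $+$ are identical, and $\synId$ follows after noting, via the stability clause of Proposition~\ref{prop:stable-fibered-path-obs}, that the fibered path object of $p_\beta$ is the restriction along $i$ of that of $p_\alpha$.
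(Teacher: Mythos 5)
Your proposal follows essentially the same route as the paper: classify each categorical construction via Corollary~\ref{cor:U_classifies}, checking fibrancy and $\alpha$-smallness; use the fibered path object (Proposition~\ref{prop:stable-fibered-path-obs}) and Proposition~\ref{prop:lifting-op-iff-stably-orthog} for $\IdStrux$; obtain $\WStrux$ from the transfinite construction of the initial algebra; and for the internal universe, note $\UU[\beta]$ is $\alpha$-small and Kan, classify it, and define each $\PiStrux_0$ (etc.)\ by factoring the composite with $i$ through $\UU[\beta]\hookrightarrow\UU[\alpha]$ rather than reusing the independently-constructed structure on $\UU[\beta]$ (correctly, since the latter need not commute with $i$).

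The one place where you are sketchier than the paper is the $\WStrux$ case. You correctly isolate the crux --- that the initial algebra, built as a transfinite colimit, must remain a fibration over $U^{\synW}$ --- and your heuristic (fibrations over a fixed base are closed under filtered colimits along monomorphisms, because horns are finite hence $\omega$-compact) is indeed the right mechanism. But you leave this as ``should follow,'' whereas the paper outsources precisely this step (together with the $\alpha$-smallness of the colimit) to \cite[Thm.~3.4]{moerdijk-van-den-berg:w-types-in-hott}. This is a citation gap rather than a mathematical one. Conversely, you supply a smallness estimate that the paper omits: the argument that $\UU[\beta]$ itself is $\alpha$-small (bounding the total space of a $\beta$-small fibration over $\Delta[n]$ by $\beta$ via regularity, then the number of isomorphism classes by $2^\beta<\alpha$ via $\alpha$ being a strong limit), which the paper dispatches with ``Since $\beta<\alpha$, $\UU[\beta]$ is itself $\alpha$-small.''
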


\begin{proof}
($\PiStrux$-structure): Given a pair of $\alpha$-small fibrations $Z \to^q Y \to^p X$, the dependent product $\toposPi_p q$ in $\sSets/X$ is again a fibration, by Lemma~\ref{lemma:dep-prod-of-fibs}; it is also $\alpha$-small, since $\alpha$ is inaccessible.

Hence by Corollary~\ref{cor:U_classifies}, the universal dependent product over $\UU^{\synPi\text{-\form}}$ is representable as the pullback of $\UUt$ along some map $\PiStrux \colon \UU^{\synPi\text{-\form}} \to \UU$, giving the desired $\PiStrux$-structure.

($\SigmaStrux$-structure): Similarly, given $\alpha$-small fibrations $Z \to^q Y \to^p X$, the composite $p \cdot q$ is again an $\alpha$-small fibration.  So the universal dependent sum over $\UU^{\synSigma\text{-\form}}$ is representable by some map $\SigmaStrux \colon \UU^{\synSigma\text{-\form}} \to \UU$.

($\IdStrux$-structure): Given any $\alpha$-small fibration $p \colon Y \to X$, consider the factorisation of the diagonal $\Delta_p$ as $Y \to^{r} \paths_X(Y) \to^{(s,t)} Y \times_X Y$.  The fibration $(s,t)$ is easily seen to be $\alpha$-small; and by Proposition~\ref{prop:stable-fibered-path-obs}, $r$ is stably orthogonal to $(s,t)$ over $X$.

Applying this construction to $p_\alpha \colon \UUt \to \UU$ itself yields, via Proposition~\ref{prop:lifting-op-iff-stably-orthog}, the desired $\IdStrux$-structure on $\UU$.

($\WStrux$-structure): Given $\alpha$-small fibrations $Z \to^q Y \to^p X$, the initial algebra $W_q \to X$ for the induced polynomial endofunctor on $\sSets/X$ may be obtained as a transfinite colimit of iterations of the endofunctor; it can be shown from this description that it is again an $\alpha$-small fibration (cf.~\cite[Thm.~3.4]{moerdijk-van-den-berg:w-types-in-hott} and \cite{moerdijk-van-den-berg:w-types-in-hott-corr}).

($\zeroStrux$-structure), ($\oneStrux$-structure), ($\plusStrux$-structure): straightforward.

(Internal universe.)  Since $\beta < \alpha$, $\UU[\beta]$ is itself $\alpha$-small; and by Theorem~\ref{U:KanCpx}, it is Kan.  So $\UU[\beta]$ is representable as the pullback of $\UUt$ along some $u_\beta \colon 1 \to \UU$.  Moreover, there is a natural inclusion $i \colon \UU[\beta] \to \UU$, with $\UUt[\beta] \iso i^* \UUt$ by construction.  Together these give the desired internal universe $(u_\beta, i)$.

Finally, to see that $(u_\beta,i)$ is closed under the appropriate constructors in $i$, note that for each of $\PiStrux$, $\SigmaStrux$, and $\IdStrux$ as constructed above, the image of the composite with $i$ lies again in $\UU[\beta]$, and hence factors through $i$; for instance, in the case of $\PiStrux$,
\[ \xymatrix{ \UU[\beta]^{\synPi\text{-\form}} \ar[r]^{i^{\synPi\text{-\form}}} \ar@{.>}[d]^{\PiStrux} & \UU^{\synPi\text{-\form}} \ar[d]^\PiStrux \\
              \UU[\beta] \ar[r]^i & \UU } \]

(Note that while we do already have a $\PiStrux$-structure (and so on) on $\UU[\beta]$ as constructed in the first parts of this theorem, those choices of the structure do not automatically commute with $i$.)  
\end{proof}

\begin{corollary} \label{cor:simplicial-model}
Let $\beta < \alpha$ be inaccessible cardinals.  Then there is a model of dependent type theory in $\sSets_{\UU}$ with all the logical constructors of Section~\ref{subsec:logical-rules}, and a universe (given by $\UU[\beta]$) closed under these constructors. \qed
\end{corollary}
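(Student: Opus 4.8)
The plan is to combine the two substantial theorems already in hand: Theorem~\ref{thm:the-model-in-ssets}, which equips the universe $\UU$ with all the required categorical structure, and Theorem~\ref{thm:structure-on-U-to-CU}, which transports such structure from a universe in an lccc to the associated contextual category. The only genuinely new observation needed is that $\sSets$ is a suitable ambient category in the sense of Section~\ref{section:models-from-universes}---namely, that it is locally cartesian closed and has a terminal object---so that both the contextualisation $\sSets_\UU$ of Definition~\ref{def:contextualisation} and the logical-structure constructions of Section~\ref{subsec:logical-structure-on-universes} apply.

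First I would record that $\sSets$, being a presheaf category, is locally cartesian closed---each slice $\sSets/X$ is again a presheaf category, and the pullback functors between such slices admit right adjoints---and has terminal object $\Delta[0]$. Hence Definition~\ref{def:contextualisation} yields a contextual category $\sSets_\UU$ (well-defined up to canonical isomorphism, independently of the choices of pullbacks and terminal object), and $\UU = \UU[\alpha]$ is a universe in an lccc in precisely the sense presupposed by the definitions of $\PiStrux$-, $\SigmaStrux$-, $\IdStrux$-, $\WStrux$-, $\oneStrux$-, $\zeroStrux$-, and $\plusStrux$-structure, as well as of an internal universe.

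Next, by Theorem~\ref{thm:the-model-in-ssets}, since $\alpha$ is inaccessible the universe $\UU$ carries each of these structures; and since $\beta < \alpha$ is also inaccessible, $(u_\beta, i)$ is an internal universe in $\UU$ closed under all of them. Feeding this data into Theorem~\ref{thm:structure-on-U-to-CU}, the induced structure on $\sSets_\UU$ consists of $\synPi$-, $\synSigma$-, $\synId$-, $\synW$-, $\synOne$-, $\synZero$-, and $+$-type structure, together with a universe à la Tarski closed under the corresponding constructors. That is exactly a model of the type theory of Section~\ref{subsec:the-type-theory}, with the internal universe given by $\UU[\beta]$, as claimed.

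The proof is therefore entirely formal once the two cited theorems are available; there is no real obstacle, only the bookkeeping of verifying that $\sSets$ meets the hypotheses (local cartesian closedness, a terminal object) and matching "all the logical constructors of Section~\ref{subsec:logical-rules}" against the list of structures produced on $\UU$ in Theorem~\ref{thm:the-model-in-ssets}. The one point worth flagging---the same subtlety noted in the proof of Theorem~\ref{thm:structure-on-U-to-CU}---is that the choices of $\PiStrux$-structure (and so on) on $\UU[\beta]$ furnished by the first part of Theorem~\ref{thm:the-model-in-ssets} need not be the ones commuting with $i$; it is the specifically $i$-compatible structures supplied by the internal-universe clause of that theorem that one must use when invoking Theorem~\ref{thm:structure-on-U-to-CU}.
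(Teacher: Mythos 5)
Your proof is correct and is precisely the argument the paper intends: the corollary is stated with an immediate $\qed$ because it is nothing but the composite of Theorem~\ref{thm:the-model-in-ssets} (which equips $\UU$ with the required structures and an internal universe $\UU[\beta]$) and Theorem~\ref{thm:structure-on-U-to-CU} (which transports these to the contextual category $\sSets_\UU$). Your added remarks—that $\sSets$ is lccc with a terminal object so the framework of Section~\ref{subsec:contextualization} applies, and that one must use the $i$-compatible structures from the internal-universe clause rather than the independently-chosen structures on $\UU[\beta]$—are both accurate and correctly echo the parenthetical caveat in the proof of Theorem~\ref{thm:the-model-in-ssets}.
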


Assuming initiality (Conjecture~\ref{conj:initiality}), this implies the existence of a morphism $\CC(\TT) \to \sSets_{\UU}$, interpreting the syntax of Martin-Löf type theory in simplicial sets.  Even without assuming initiality, it gives us the operations with which to heuristically interpret individual judgements of the syntax by hand.  We therefore freely make notational use of the interpretation, writing $\interp{ \mathcal{J} }$ for the interpretation of a judgement $\mathcal{J}$.

In doing so, we will make several systematic abuses of notation.  Firstly, referring in the syntax to fibrations, we will write $E$ rather than $\name{E}$, and so on, whenever some choice of name $\name{E} \colon B \to \UU$ for the fibration is understood; and conversely, referring to the interpretation of a type $\Gamma \types T\ \type$, we use $\interp{T}$ to refer to the fibration over $\interp{\Gamma}$ given by pulling back $\UUt$ along the literal interpretation $\interp{\Gamma \types T\ \type} \colon \interp{\Gamma} \to \UU$.

As a first characteristic of the model, we note that both of the extra principles on equality of functions hold.

\begin{proposition} \label{prop:eta-and-funext}
The $\eta$-rule and functional extensionality rules of Section~\ref{subsec:optional-rules} hold in the simplicial model.
\end{proposition}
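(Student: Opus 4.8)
The plan is to treat the two rules separately; in each case the real work is done by the good behaviour of dependent products recorded in Lemmas~\ref{lemma:dep-prod-of-fibs} and~\ref{lemma:exp_along_cofib}. The $\eta$-rule is immediate: in the model (see the proof of Theorem~\ref{thm:structure-on-U-to-CU}) the operations $\lambda$ and $\app$ realise the two directions of the adjunction bijection between sections of $\toposPi_{A\shortto\Gamma}B$ over $\Gamma$ and sections of $B$ over $A$; these are genuine mutual inverses, so $\lambda(x\mapsto\app(k,x)) = k$ on the nose for every section $k$ of $\synPi(A,B)$, which is exactly the $\eta$-rule, and stability under substitution is the naturality of the adjunction.

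For functional extensionality it suffices to show that the interpretation of the canonical comparison map
\[ \syn{happly}\colon \synId_{\synPi(A,B)}(f,g)\ \longrightarrow\ \synPi_{x\oftype A}\synId_{B(x)}(f(x),g(x)) \]
is a weak equivalence. Let $p\colon A\to\Gamma$ and $q\colon B\to A$ be the fibrations interpreting $\Gamma\types A\ \type$ and $\Gamma,x\oftype A\types B\ \type$. Applying $\toposPi_p$ (which preserves finite limits) to the factorisation $B\to^{r}\paths_A(B)\to^{(s,t)}B\times_A B$ of Proposition~\ref{prop:stable-fibered-path-obs} produces a factorisation of the diagonal of $\toposPi_p B$ over $\Gamma$ through $\toposPi_p\paths_A(B)$, in which $\toposPi_p(s,t)$ is a fibration (by the argument of Lemma~\ref{lemma:dep-prod-of-fibs}, now applied in $\sSets/\Gamma$: $p^*$ is left Quillen on the slices since $\sSets$ is right proper) while $\toposPi_p(r)$ is a weak equivalence, being a section of the trivial fibration $\toposPi_p(s)$ supplied by Lemma~\ref{lemma:exp_along_cofib}(1). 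Hence $\toposPi_p\paths_A(B)$ is a path object for $\toposPi_p B$ over $\Gamma$, and so receives a comparison weak equivalence from the standard fibered path object $\paths_\Gamma(\toposPi_p B)$ used to interpret $\synId_{\synPi(A,B)}$. Pulling this comparison back along a pair of sections $(f,g)$ — legitimate since $\sSets$ is right proper and both path objects are fibrations over $\toposPi_p B\times_\Gamma\toposPi_p B$ — and then commuting the resulting base change past $\toposPi_p$ via Beck--Chevalley, one identifies $\interp{\synId_{\synPi(A,B)}(f,g)}$, up to weak equivalence, first with the fibre of $\toposPi_p\paths_A(B)$ over $(f,g)$ and then with $\interp{\synPi_{x\oftype A}\synId_{B(x)}(f(x),g(x))}$; a check of the construction shows the composite is homotopic to $\syn{happly}$.

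I expect the main obstacle to be this last bookkeeping step: matching the abstractly obtained equivalence with the syntactic map $\syn{happly}$, and verifying that the Beck--Chevalley isomorphisms used are stable under substitution, so that the \emph{rule} (and not merely each instance) holds in the model. Everything else is formal. A shorter route, at the cost of appealing to Theorem~\ref{thm:free-cxl-cat} and arguing inside the type theory, is also available: Lemma~\ref{lemma:exp_along_cofib}(1) already says $\toposPi_p$ carries trivial fibrations to trivial fibrations, which directly validates \emph{weak} functional extensionality (the dependent product of a family of contractible types is contractible), and weak functional extensionality implies the full rule by a standard derivation within the type theory.
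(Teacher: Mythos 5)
Your $\eta$-rule argument matches the paper's: both appeal to the uniqueness clause in the universal property of the LCCC dependent product used to interpret $\synPi$.

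For functional extensionality the underlying computation you carry out is the same as the paper's: apply $\toposPi_p$ to the factorisation $B \to^{r} \paths_A(B) \to^{(s,t)} B \times_A B$ of Proposition~\ref{prop:stable-fibered-path-obs}, observe that $\toposPi_p(s,t)$ is a fibration by Lemma~\ref{lemma:dep-prod-of-fibs}, that $\toposPi_p$ preserves monomorphisms, and that $\toposPi_p$ preserves trivial fibrations by Lemma~\ref{lemma:exp_along_cofib}(1) so the retraction $\toposPi_p(s)$ is a trivial fibration and hence $\toposPi_p(r)$ is a trivial cofibration; and note Beck--Chevalley for stability. Where you diverge is in how you convert this into the syntactic rule: you attempt to show the interpreted $\syn{happly}$ is a weak equivalence, then pull back along $(f,g)$ and match the result up with the syntactic map — and you correctly flag that this matching is the hard bookkeeping step, and that one must also check it yields the stability needed to validate the rule (not just its instances). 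The paper avoids this difficulty entirely by invoking Garner's criterion \cite[Sec.~5]{garner:on-the-strength}: functional extensionality holds precisely when the product of identity types $\toposPi_p(\paths_A B)$ admits the structure of an identity type for $\toposPi_p B$, i.e.\ is a suitably stable path object. Since that is exactly what the $\toposPi_p$-computation establishes, no comparison with $\syn{happly}$ is needed. So the proposal is sound in its core ideas but takes the harder-to-close route; the paper's use of Garner's result is the device that makes the bookkeeping you worry about disappear. Your observation that Lemma~\ref{lemma:exp_along_cofib}(1) already yields weak functional extensionality (contractibility of dependent products of contractibles), from which the full rule follows by an internal derivation, is also correct and would give a third valid route.
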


\begin{proof}
The $\eta$-rule follows immediately from our use of categorical exponentials to interpret $\synPi$-types, by the uniqueness in the categorical universal property.

For functional extensionality, Garner \cite[Sec.~5]{garner:on-the-strength} shows that it holds just if each product of identity types, 
\[ f, g \oftype \synPi_{x \oftype A} B(x) \types \synPi_{x \oftype A} \synId_{B(x)} (\syn{app}(f,x),\syn{app}(g,x))\ \type\]
admits the structure given by the rules for the identity type on the corresponding product types,
\[f,g \oftype \synPi_{x \oftype A} B(x) \types \synId_{\synPi_{x \oftype A} B(x)} (f,g)\ \type . \]

So it is enough to show that for any pair of ($\alpha$-small, well-ordered) fibrations $Z \to^q Y \to^p X$, given by names
\[ \name{Y} \colon X \to \UU, \qquad \name{Z} \colon Y \to \UU,\]
the interpretation of the product of identity types
\[ \interp{\synPi_{x \oftype Y} \synId_{Z(x)} (\syn{app}(f,x),\syn{app}(g,x)) } \iso \toposPi_p (\paths_Y Z),\]
 gives a suitably stable path object for the interpretation of the product types, 
\[ \interp{\synId_{\synPi_{x \oftype Y} Z(x)} (f,g)} \iso \toposPi_p Z.\]

For this, it is clear that $\toposPi_p (s,t) \colon \toposPi_p (\paths_Y Z) \to \toposPi_p (Z \times_Y Z) \iso \toposPi_p Z \times_X \toposPi_p Z$ is a fibration, since $\toposPi_p$ preserves fibrations (Lemma~\ref{lemma:dep-prod-of-fibs}). Similarly, $\toposPi_p r_q \colon \toposPi_p Z \to \toposPi_p (\paths_Y Z)$ is a cofibration since $\toposPi_p$ preserves monomorphisms; and it is a weak equivalence, since $\toposPi_p$ preserves trivial fibrations (Lemma~\ref{lemma:joyal-lemma}), and so the retraction $\toposPi_p s_q \colon \toposPi_p (\paths_Y Z) \to \toposPi_p Z$ is again a trivial fibration.  Finally, by the Beck-Chevalley condition in an LCCC, the entire construction is stable under pullback in $X$, as required.
\end{proof}

It now remains only to show that the Univalence Axiom holds in this model.

%%% Local Variables: 
%%% mode: latex
%%% TeX-master: "simplicial-model"
%%% End: 

\section{Univalence} \label{section:univalence}

In this section, we will introduce the Univalence Axiom, and show that it holds in the simplicial model.

The proof of this involves both simplicial and type-theoretic components; we keep these separate, as far as possible.  First of all (Section~\ref{subsec:type-theoretic-univalence}), we define univalence type-theoretically and state the Univalence Axiom; next, we define an analogous simplicial concept of univalence (Section~\ref{subsec:simplicial-univalence}); we then show that via the simplicial model, the two notions coincide (Section~\ref{subsec:univalence-equivalence}).  Finally, in Section~\ref{subsec:univalence-of-uu}, we prove our main theorem: that $\UU$ is univalent (using the simplicial sense), and hence that the Univalence Axiom holds in the simplicial model of type theory.  Lastly, in Section~\ref{subsec:pullback-reps}, we discuss an alternative formulation of simplicial univalence, and so obtain an up-to-homotopy uniqueness statement for the weak universal property of $\UU$.

Once again, we freely use the syntax of type theory as a notation; to avoid formal dependence on Conjecture~\ref{conj:initiality}, the reader should translate each individual syntactic expression used into the language of contextual categories.

\subsection{Type-theoretic univalence} \label{subsec:type-theoretic-univalence}

To state the univalence axiom, we first need to define a few basic notions in the type theory.

\begin{definition}[Joyal] \label{def:hiso} Let $f \colon A \to B$ be a function in some context $\Gamma$, i.e.\ $\Gamma\types f : [A,B]$ (where the function type $[A,B]$ is defined using $\synPi$, as described in Section~\ref{subsec:logical-rules}).
\begin{itemize}
\item A \emph{left homotopy inverse} for $f$ is a function $g \colon B \to A$, together with a homotopy $g \cdot f \homot 1_A$.  Formally, we define the type $\synLHInv(f)$ of left homotopy inverses to $f$:
\[ \Gamma \types \synLHInv(f) := \Sigma_{g \oftype [B,A]} \synPi_{x \oftype A} \synId_A(g(f(x)),x)\ \type \]

\item Analogously, we define the type $\synRHInv(f)$ of \emph{right homotopy inverses}:
\[ \Gamma \types \synRHInv(f) := \Sigma_{g \oftype [B,A]} \synPi_{y \oftype B} \synId_B(f(g(y)),y)\ \type \]

\item We say $f \colon A \to B$ is a \emph{homotopy isomorphism} (or more briefly, an h-isomorphism) if it is equipped with both a left and a right inverse:
\[ \Gamma \types \synisHIso(f) := \synLHInv(f) \times \synRHInv(f) \ \type \]

\item For any types $A$ and $B$, we thus have the type of h-isomorphisms from $A$ to $B$:
\[ \Gamma \types \synHIso(A,B) := \Sigma_{f \oftype [A,B]} \synisHIso(f) \]
\end{itemize}
\end{definition}

It may perhaps be surprising that we use homotopy isomorphisms rather than the more familiar homotopy equivalences, with a single two-sided homotopy inverse.  The reason is that while a map carries either structure if and only if it carries the other, the type, or object, of such structures on a map is different.  In particular, the analogue of Lemma~\ref{lemma:hiso-over-eq} for homotopy equivalences does not hold; for further discussion of these issues, see \cite[Ch.~4]{hott:book}.

\begin{example}
For any type $B$, the identity function on $B$ is canonically an h-isomorphism.
\end{example}

Suppose now that $A$ is any type, and $x \colon A \types B(x)\ \type$ a family of types over $A$.  By the identity elimination rule, we can derive
\[ x,y \oftype A,\ u \oftype \synId_A(x,y) \types w_{x,y,u} : \synHIso(B(x),B(y)). \]

This can equivalently be seen as a map
\[ x,y \oftype A \types w_{x,y} : [\synId_A(x,y), \synHIso(B(x),B(y))]. \]

\begin{definition} \label{def:type-theoretic-univalence}
We say the family $B(x)$ is \emph{univalent} if for each $x,y$, the map $w_{x,y}$ is itself a homotopy isomorphism:
\[ \types \synisUnivalent(x \oftype A . B(x)) := \synPi_{x,y \oftype A} \synisHIso(w_{x,y}). \]
\end{definition}

\begin{axiom} \label{axiom:univalence}
The \emph{Univalence Axiom}, for a given type-theoretic universe $U$, is the statement that the canonical family $\el$ of types over $U$ is univalent.
\end{axiom}

Informally, the Univalence Axiom says that just as elements of the universe correspond to types, so equalities in the universe correspond to equivalences between types.  In particular, since every statement or construction must respect propositional equality, the Univalence Axiom stipulates that the language can never distinguish between equivalent types.

\subsection{Simplicial univalence}  \label{subsec:simplicial-univalence}

To define a simplicial notion of univalence, we first need to construct the \emph{object of weak equivalences} between fibrations $p_1 \colon E_1 \to B$ and $p_2 \colon E_2 \to B$ over a common base.  In other words, we want an object representing the functor sending $(X,f) \in \sSets/B$ to the set $\extEq_X(f^*E_1,f^*E_2)$.  As we did for $\U$, we proceed in two steps, first exhibiting it as a subfunctor of a functor more easily seen (or already known) to be representable.

For the remainder of the section, fix fibrations $E_1$, $E_2$ as above over a base $B$. Since $\sSets$ is locally Cartesian closed, we can construct the exponential object between them:

\begin{definition} \label{def:internal-eq}
 Let $\intHom_B (E_1, E_2) \to B$ denote the internal hom from $E_1$ to $E_2$ in $\sSets/B$.

 Then for any $X$, a map $X \to \intHom_B (E_1,E_2)$ corresponds to a map $f \colon X \to B$, together with a map $u \colon f^*E_1 \to f^*E_2$ over $X$.

 Together with the Yoneda lemma, this implies the explicit description: an $n$-simplex of $\intHom_B (E_1,E_2)$ is a pair
 \[(b \colon \Delta[n] \to B, u \colon b^* E_1 \to b^* E_2) .\]
\end{definition}

\begin{lemma} \label{lemma:HOM_is_fib}
 $\intHom_B (E_1, E_2) \to B$ is a Kan fibration.
\end{lemma}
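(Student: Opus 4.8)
The plan is to recognise $\intHom_B(E_1,E_2)$ as a dependent product and then invoke Lemma~\ref{lemma:dep-prod-of-fibs}. Recall that in a locally cartesian closed category the internal hom in a slice $\sSets/B$ is computed by $\intHom_B(E_1,E_2) \iso \toposPi_{p_1}(p_1^*E_2)$, where $p_1 \colon E_1 \to B$ and $p_1^*E_2 \to E_1$ is the pullback of $p_2 \colon E_2 \to B$ along $p_1$. (This is exactly the identification underlying the explicit description of $n$-simplices in Definition~\ref{def:internal-eq}: a map into $\toposPi_{p_1}(p_1^*E_2)$ over $B$ is, by the adjunction $p_1^* \adjoint \toposPi_{p_1}$, a map $f^*E_1 \to f^*E_2$ over the base.)

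First I would observe that $p_1^*E_2 \to E_1$ is a Kan fibration, being the pullback of the Kan fibration $p_2$. Thus we have a composable pair of fibrations
\[ p_1^*E_2 \longrightarrow E_1 \overset{p_1}{\longrightarrow} B, \]
and Lemma~\ref{lemma:dep-prod-of-fibs} applies directly, giving that $\toposPi_{p_1}(p_1^*E_2) \to B$ is a fibration. Under the isomorphism above this is precisely the assertion that $\intHom_B(E_1,E_2) \to B$ is a Kan fibration.

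There is essentially no obstacle here: the only point requiring care is the (entirely standard) identification of the slice-wise internal hom with $\toposPi_{p_1} p_1^*$, after which the statement is an immediate instance of Lemma~\ref{lemma:dep-prod-of-fibs}. If one prefers to avoid even naming the identification, one can argue directly: $p_1^* \colon \sSets/B \to \sSets/E_1$ preserves trivial cofibrations (as $\sSets$ is right proper and cofibrations are monomorphisms), so its right adjoint — which sends $p_1^*E_2$ to $\intHom_B(E_1,E_2)$ — preserves fibrant objects, and $p_1^*E_2 \to E_1$ is fibrant in $\sSets/E_1$ since $p_2$, hence $p_1^*p_2$, is a fibration.
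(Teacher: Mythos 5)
Your argument is correct and is exactly the paper's: the paper cites Lemma~\ref{lemma:dep-prod-of-fibs} with the remark that the exponential in a slice is a special case of a dependent product, which is precisely the identification $\intHom_B(E_1,E_2) \iso \toposPi_{p_1}(p_1^*E_2)$ you spell out. You have simply written out the routine details (that $p_1^*E_2 \to E_1$ is a fibration by pullback stability) that the paper leaves implicit.
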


\begin{proof}
Follows immediately from Lemma~\ref{lemma:dep-prod-of-fibs}, since the exponential is a special case of dependent products.
\end{proof}

Within $\intHom_B (E_1, E_2)$, we now want to construct the subobject of weak equivalences.

\begin{lemma} \label{lemma:weqs_pull_back} %% VV Lemma 1.7
Let $f \colon E_1 \to E_2$ be a weak equivalence over $B$, and suppose $g \colon B' \to B$. Then the induced map between pullbacks $g^*E_1 \to g^*E_2$ is a weak equivalence.
\end{lemma}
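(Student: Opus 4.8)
The plan is to deduce this from general model-category machinery rather than arguing directly with simplicial maps. Recall that for any simplicial set $B$ the slice category $\sSets/B$ carries a model structure in which a morphism is a cofibration, fibration, or weak equivalence precisely when its underlying map of simplicial sets is; in particular the fibrant objects of $\sSets/B$ are exactly the Kan fibrations over $B$. So the hypothesis says precisely that $f \colon E_1 \to E_2$ is a weak equivalence between fibrant objects of $\sSets/B$, and the conclusion we want is that $g^*f$ is a weak equivalence.

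Given $g \colon B' \to B$, the pullback functor $g^* \colon \sSets/B \to \sSets/B'$ is right adjoint to $\toposSigma_g$ (post-composition with $g$). Since $\toposSigma_g$ leaves the underlying simplicial map of a morphism unchanged, it preserves monomorphisms and weak equivalences, hence cofibrations and trivial cofibrations; thus $g^*$ is right Quillen. By Ken Brown's lemma, a right Quillen functor preserves weak equivalences between fibrant objects, so $g^*f \colon g^*E_1 \to g^*E_2$ is a weak equivalence between fibrations over $B'$, and in particular a weak equivalence of simplicial sets, as required.

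For readers preferring a hands-on argument, one can instead factor $f$ \emph{over} $B$ as $E_1 \xrightarrow{j} E_1' \xrightarrow{q} E_2$ with $j$ a trivial cofibration and $q$ a fibration; then $q$ is a trivial fibration by two-out-of-three, and $E_1' \to B$ is a fibration (being the composite of $q$ with the fibration $E_2 \to B$). Pulling back along $g$, the map $g^*q$ is again a trivial fibration, since trivial fibrations are stable under pullback; and $g^*j$ is again a weak equivalence, because $j$ — a trivial cofibration between objects fibrant over $B$ — admits a retraction $s$ over $B$ with $j \cdot s \homot 1_{E_1'}$ over $B$, and both $s$ and this homotopy pull back along $g$ (the latter because the cylinder $(-) \times \Delta[1]$ in $\sSets/B$ commutes with pullback along $g$). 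Hence $g^*f = g^*q \cdot g^*j$ is a composite of weak equivalences. The one point worth flagging is that one cannot invoke right properness of $\sSets$ directly here, since $g$ need not be a fibration; this is exactly what passing through "$g^*$ is right Quillen" (or the explicit factorisation above) circumvents.
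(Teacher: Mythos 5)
Your first argument is exactly the paper's proof: observe that $g^* \colon \sSets/B \to \sSets/B'$ preserves trivial fibrations and invoke Ken Brown's lemma to conclude it preserves weak equivalences between fibrant objects. The extra detail you supply (that $g^*$ is right Quillen via the adjunction $\toposSigma_g \dashv g^*$) and the elementary factorisation alternative are both correct but not needed beyond that one-line argument.
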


\begin{proof}
The pullback functor $g^* \colon \sSets/B \to \sSets/B'$ preserves trivial fibrations; so by Ken Brown's Lemma \cite[Lemma~1.1.12]{hovey:book}, it preserves all weak equivalences between fibrant objects.
\end{proof}

Thus, weak equivalences from $E_1$ to $E_2$ form a subfunctor of the functor of maps from $E_1$ to $E_2$.  To show that this is representable, we need just to show:

\begin{lemma} \label{lemma:weq_fibers}  %% VV Lemma 1.8
 Let $f \colon E_1 \to E_2$ be a morphism over $B$.  If for each simplex $b \colon \Delta[n] \to B$ the induced map $f_b \colon b^*E_1 \to b^* E_2$ is a weak equivalence, then $f$ is a weak equivalence.
\end{lemma}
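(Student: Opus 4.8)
The plan is to prove this by induction on the skeleta of $B$, and then pass to the colimit. Write $B = \colim_n \mathrm{sk}_n B$, and for each $n$ let $f^{(n)}$ denote the restriction of $f$ over $\mathrm{sk}_n B$, i.e.\ the map $(\mathrm{sk}_n B)^* E_1 \to (\mathrm{sk}_n B)^* E_2$. Since $\sSets$ is locally cartesian closed, each pullback functor $(\mathrm{sk}_n B)^*$, and more generally the functor $-\times_B E_i$, is cocontinuous and preserves monomorphisms; so $E_i = \colim_n (\mathrm{sk}_n B)^* E_i$ with each transition map a cofibration, and $f = \colim_n f^{(n)}$. Granting that each $f^{(n)}$ is a weak equivalence, $f$ is then a weak equivalence as well, being a sequential colimit of weak equivalences along cofibrations (a standard stability property of the Kan--Quillen model structure, using left properness; cf.\ \cite{hovey:book}). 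So it suffices to show, by induction, that every $f^{(n)}$ is a weak equivalence.

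For the base case, $\mathrm{sk}_0 B$ is discrete, so $f^{(0)}$ is the coproduct over the vertices $b \in B_0$ of the induced maps on fibers $b^*E_1 \to b^*E_2$, each of which is a weak equivalence by hypothesis. For the inductive step, recall that $\mathrm{sk}_n B$ is the pushout of the span $\coprod_\sigma \Delta[n] \hookleftarrow \coprod_\sigma \partial\Delta[n] \to \mathrm{sk}_{n-1} B$, where $\sigma$ ranges over the non-degenerate $n$-simplices of $B$ and the right-hand map is the (collective) attaching map. Applying $-\times_B E_i$ — which preserves this pushout and carries the left leg, a monomorphism, to a monomorphism — exhibits $(\mathrm{sk}_n B)^* E_i$ as a pushout of the corresponding restrictions of $E_i$, and $f^{(n)}$ as the induced map of pushouts. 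Its three constituent maps are all weak equivalences: over $\coprod_\sigma \Delta[n]$ it is the coproduct of the maps $\sigma^* f$, weak equivalences by hypothesis; over $\mathrm{sk}_{n-1}B$ it is $f^{(n-1)}$, a weak equivalence by the inductive hypothesis; and over $\coprod_\sigma \partial\Delta[n]$ it is obtained from $f^{(n-1)}$ by pulling back along the attaching maps, hence a weak equivalence by Lemma~\ref{lemma:weqs_pull_back} (the relevant objects being fibrant over their bases, as pullbacks of the fibrations $E_1$, $E_2$). Since $\sSets$ is left proper, the gluing lemma for pushouts along cofibrations then yields that $f^{(n)}$ is a weak equivalence, completing the induction.

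There is no deep obstacle here; the points requiring care are purely the model-categorical bookkeeping: checking that pullback along the skeletal inclusions commutes with the skeletal pushouts and with the sequential colimit (which is exactly cocontinuity of pullback functors in the locally cartesian closed category $\sSets$), verifying fibrancy of all the pulled-back objects so that Lemma~\ref{lemma:weqs_pull_back} applies, and invoking left properness in the correct form (the gluing lemma, and stability of weak equivalences under sequential colimits along cofibrations). As an alternative one could bypass skeleta entirely, by first establishing that a map over $B$ between Kan fibrations is a weak equivalence as soon as its restriction to the fiber over each vertex of $B$ is — reducing via Quillen's Lemma and Lemma~\ref{lemma:may-lemma} to the case of a minimal fibration over a simplex — and then applying the hypothesis for $0$-simplices; but the skeletal argument above has the virtue of relying only on Lemma~\ref{lemma:weqs_pull_back} and standard model-category generalities.
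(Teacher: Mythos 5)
Your proof is correct, but it takes a genuinely different route from the paper's. The paper reduces to a connected base, fixes a vertex, and runs a direct comparison of the long exact sequences of the two fibrations $E_1 \to B$ and $E_2 \to B$ via the Five Lemma (with a small extra argument at $\pi_0$). You instead argue by skeletal induction over $B$, using that the pullback functor $(-)\times_B E_i$ preserves the skeletal pushouts and the colimit, and invoking the gluing lemma and the stability of weak equivalences under sequential colimits along cofibrations — purely model-categorical generalities, applicable with essentially no change in any cofibrantly generated left proper model category with a suitable cellular presentation of the base. One thing the paper's proof buys that yours does not: because the LES argument only ever consults the fiber over a single chosen vertex in each component, the paper's proof automatically yields the stronger Lemma~\ref{lemma:connected_weq} (that it suffices to check the fiber map over one vertex per connected component), which is then used in the proof of Corollary~\ref{cor:eq_is_fib}. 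Your skeletal induction uses the hypothesis over $n$-simplices for all $n$, so it proves the lemma as stated but would need an additional observation — e.g.\ homotopy invariance of fibers over a connected base, which is exactly what the LES argument encodes — to recover Lemma~\ref{lemma:connected_weq}. You gesture at this in your closing remark about the minimal-fibration alternative, which is indeed closer in spirit to what the paper actually does.
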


\begin{proof}
Without loss of generality, $B$ is connected; otherwise, apply the result over each connected component separately.  Take some vertex $b \colon \Delta[0] \to B$, and set $F_i := b^*E_i$.  Now for any vertex $e \colon \Delta[0] \to F_1$, and any $n \geq 1$, we have by the long exact sequence for a fibration:
 \[\mathclap{\xymatrix@C=0.5cm{
 \pi_{n+1} (B, b) \ar[r] \ar[d]_{1} & \pi_{n} (F_1, e) \ar[r] \ar[d]_{\pi_n (f_b)} & \pi_{n} (E_1, e) \ar[r] \ar[d]_{\pi_n(f)} & \pi_{n} (B, b) \ar[r] \ar[d]^{1} & \pi_{n-1} (F_1, e) \ar[d]^{\pi_{n-1} (f_b)} \\
 \pi_{n+1} (B, b) \ar[r] & \pi_{n} (F_2, f(e)) \ar[r] & \pi_{n} (E_2, f(e)) \ar[r] & \pi_{n} (B, b) \ar[r]  & \pi_{n-1} (F_2, f(e))  }}\]
Each $\pi_n(f_b)$ is an isomorphism, so by the Five Lemma, so is $\pi_n (f)$, for $n \geq 1$.  

The case $n = 0$ is the same in spirit, but requires a little more work since the Five Lemma is unavailable.  We have a square
 \[\xymatrix{
 \pi_0 (F_1) \ar@{->>}[r]^{\pi_0(i_1)} \ar[d]_{\pi_0 (f_b)} & \pi_0 (E_1) \ar[d]_{\pi_0 (f)} \\
 \pi_0 (F_2) \ar@{->>}[r]^{\pi_0(i_2)} & \pi_0 (E_2) }\]
with both horizontal arrows surjections, and $\pi_0(f_b)$ an isomorphism.  To show that $\pi_0(f)$ is an isomorphism, it therefore suffices to show that for each $x \in \pi_0(E_1)$, the restriction of $f_b$ to a map of fibers 
\[ \pi_0(i_1)^{-1}(x) \to \pi_0(i_2)^{-1}(\pi_0(f)(x)) \]
is again an isomorphism.  But this follows from the continuation of the long exact sequence to an exact sequence of pointed sets:
 \[\xymatrix{
 \pi_1(B,b) \ar[d]_{1} \ar[r] & \pi_0(F_1,e) \ar@{->>}[r]^{\pi_0(i_1)} \ar[d]_{\pi_0 (f_b)} & \pi_0(E_1,x) \ar[d]_{\pi_0 (f)} \\
 \pi_1(B,b) \ar[r] & \pi_0 (F_2,f(e))\ar@{->>}[r]^{\pi_0(i_2)} & \pi_0(E_2,f(e)) }\]
where $e$ is any point of $F_1$ such that $[e] = x$.

Thus $\pi_n(f)$ is an isomorphism for each $n \geq 0$ and basepoint $e \in E_1$; so $f$ is a weak equivalence.
\end{proof}

\begin{definition}
 Take $\intEq_B(E_1, E_2)$ to be the subobject of $\intHom_B (E_1, E_2)$ consisting of all $n$-simplices
 \[ (b \colon \Delta [n] \to B, w \colon b^*E_1 \to b^* E_2)\]
 such that $w$ is a weak equivalence.  (By Lemma~\ref{lemma:weqs_pull_back}, this indeed defines a simplicial subset.)
\end{definition}

From Lemma~\ref{lemma:weq_fibers}, we immediately have:

\begin{corollary}\label{cor:weqs_representable}
 Let $(f, u) \colon X \to \intHom_B (E_1, E_2)$.  Then $u$ is a weak equivalence if and only if $(f, u)$ factors through $\intEq_B (E_1, E_2)$.

 Thus, maps $X \to \intEq_B(E_1,E_2)$ correspond to pairs of maps
 \[(f \colon X \to B, w \colon f^*E_1 \to f^* E_2),\]
 where $w$ is a weak equivalence. \qed
\end{corollary}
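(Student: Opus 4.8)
The plan is to deduce the corollary directly from Lemmas~\ref{lemma:weqs_pull_back} and \ref{lemma:weq_fibers}, after unwinding what it means for a map into the simplicial subobject $\intEq_B(E_1,E_2) \subseteq \intHom_B(E_1,E_2)$ to factor through it.

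First I would set up the correspondence from Definition~\ref{def:internal-eq}: a map $(f,u) \colon X \to \intHom_B(E_1,E_2)$ is the same data as a map $f \colon X \to B$ together with a map $u \colon f^*E_1 \to f^*E_2$ over $X$; since $f^*E_1$ and $f^*E_2$ are again fibrations over $X$, we may regard $u$ as a morphism between fibrations over the base $X$. For any $n$-simplex $x \colon \Delta[n] \to X$, the composite $\Delta[n] \to X \to \intHom_B(E_1,E_2)$ is the $n$-simplex named by the pair $(b, x^*u)$, where $b := f \circ x \colon \Delta[n] \to B$ and $x^*u \colon b^*E_1 \to b^*E_2$ is the pullback of $u$ along $x$ (using the canonical identification $b^*E_i \iso x^*f^*E_i$). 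Since $\intEq_B(E_1,E_2)$ is the subobject of $\intHom_B(E_1,E_2)$ cut out levelwise by the condition that the weak-equivalence clause holds, $(f,u)$ factors through $\intEq_B(E_1,E_2)$ exactly when $x^*u$ is a weak equivalence for every such $x$.

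For the ``only if'' direction I would apply Lemma~\ref{lemma:weqs_pull_back}: if $u$ is a weak equivalence over $X$, then for every $x \colon \Delta[n] \to X$ the pullback $x^*u$ is again a weak equivalence, so by the reformulation above $(f,u)$ factors through $\intEq_B(E_1,E_2)$. For the ``if'' direction I would apply Lemma~\ref{lemma:weq_fibers} with base $X$ and the fibrations $f^*E_1, f^*E_2$: if $(f,u)$ factors through $\intEq_B(E_1,E_2)$, then $x^*u$ is a weak equivalence for every simplex $x$ of $X$, which is precisely the hypothesis of Lemma~\ref{lemma:weq_fibers}, whence $u$ itself is a weak equivalence. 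The concluding clause is then just a composite of two bijections: maps $X \to \intHom_B(E_1,E_2)$ biject with pairs $(f \colon X \to B,\, w \colon f^*E_1 \to f^*E_2)$ by Definition~\ref{def:internal-eq}, and under this bijection the maps factoring through $\intEq_B(E_1,E_2)$ are exactly the pairs with $w$ a weak equivalence, by the equivalence just established.

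I do not expect a genuine obstacle: the only points requiring care are keeping the bases straight --- Lemma~\ref{lemma:weq_fibers} must be invoked over $X$, not over $B$ --- and the routine identification $b^*E_i \iso x^*f^*E_i$ for $x$ a simplex of $X$ and $b = f \circ x$. The substantive content is entirely carried by the two preceding lemmas, which is why the corollary is flagged as immediate.
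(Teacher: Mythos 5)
Your proof is correct and is exactly the argument the paper intends: the corollary carries a \verb|\qed| with no written proof precisely because it is the routine unwinding of Lemmas~\ref{lemma:weqs_pull_back} and \ref{lemma:weq_fibers} that you give, with the one point worth noting being (as you flag) that Lemma~\ref{lemma:weq_fibers} must be applied over the base $X$ to the fibrations $f^*E_1, f^*E_2$.
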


While Lemma~\ref{lemma:weq_fibers} was stated just as required by representability, its proof actually gives a slightly stronger statement:

\begin{lemma} \label{lemma:connected_weq}  %% VV Lemma 1.10
 Let $f \colon E_1 \to E_2$ be a morphism over $B$.  If for some vertex $b \colon \Delta [0] \to B$ in each connected component the map of fibers $f_b \colon b^*E_1 \to b^* E_2$ is a weak equivalence, then $f$ is a weak equivalence. \qed
\end{lemma}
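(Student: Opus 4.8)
The plan is to obtain this by re-examining the proof of Lemma~\ref{lemma:weq_fibers}: as noted in the sentence preceding this lemma, that proof already proves the present, stronger statement, so the only task is to isolate why. First I would reduce to the case that $B$ is connected, exactly as there. Writing $B$ as the coproduct of its connected components, each total space $E_i$ decomposes accordingly, and $f$ decomposes as a coproduct of its restrictions; since a coproduct of maps of simplicial sets is a weak equivalence precisely when each summand is, it suffices to treat each component of $B$ separately. Over a single component the hypothesis supplies one vertex $b$ with $f_b \colon b^*E_1 \to b^*E_2$ a weak equivalence, and passing to that component does not change $b^*E_i$, since $b$ factors through it.

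Next, with $B$ connected and $b$ fixed, I would run the remainder of the argument of Lemma~\ref{lemma:weq_fibers} verbatim: set $F_i := b^*E_i$, compare the long exact homotopy sequences of the fibrations $E_1 \to B$ and $E_2 \to B$ based at $b$, apply the Five Lemma to conclude that $\pi_n(f)$ is an isomorphism for all $n \geq 1$ and all basepoints in $F_1$, and handle $\pi_0$ by the separate diagram chase given there. Since $\pi_0(F_i) \to \pi_0(E_i)$ is surjective for any fibration over a connected base, every connected component of $E_i$ meets $F_i$, so these basepoints suffice, and hence $f$ is a weak equivalence.

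There is essentially no obstacle here: the one thing to verify is that the portion of the proof of Lemma~\ref{lemma:weq_fibers} invoked above uses the hypothesis only at the single chosen vertex $b$, never at any higher-dimensional simplex of $B$. Inspection shows this is the case, which is exactly why the weaker hypothesis of the present lemma — a single good vertex in each connected component — is already enough.
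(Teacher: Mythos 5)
Your proposal is correct and coincides with the paper's own reasoning: the paper simply records this lemma as a strengthening already established by the proof of Lemma~\ref{lemma:weq_fibers}, which is precisely your observation. Your two added glosses --- that the coproduct decomposition over $\pi_0(B)$ justifies the reduction to connected $B$, and that surjectivity of $\pi_0(F_1)\to\pi_0(E_1)$ (from the tail of the long exact sequence over a connected base) ensures the basepoints drawn from $F_1$ suffice --- are both accurate and merely make explicit what the paper leaves implicit.
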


\begin{corollary} \label{cor:eq_is_fib}
 The map $\intEq_B(E_1,E_2) \to B$ is a fibration.
\end{corollary}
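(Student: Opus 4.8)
The plan is to deduce the Kan fibration condition for $\intEq_B(E_1,E_2) \to B$ from that for $\intHom_B(E_1,E_2) \to B$, which we already have by Lemma~\ref{lemma:HOM_is_fib}. Given a lifting problem
\[\xymatrix{
  \Lambda^k[n] \ar[r] \ar@{^{(}->}[d] & \intEq_B(E_1,E_2) \ar[d] \\
  \Delta[n] \ar[r]^-b & B,
}\]
with $n \geq 1$, I would first compose the top map with the inclusion $\intEq_B(E_1,E_2) \hookrightarrow \intHom_B(E_1,E_2)$ and solve the resulting lifting problem against the fibration $\intHom_B(E_1,E_2) \to B$. This produces a map $\tilde u \colon \Delta[n] \to \intHom_B(E_1,E_2)$ lying over $b$ and restricting on the horn to the original map; by Definition~\ref{def:internal-eq}, $\tilde u$ is the same thing as a map $u \colon b^*E_1 \to b^*E_2$ over $\Delta[n]$.

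It then remains to check that $u$ is a weak equivalence: by Corollary~\ref{cor:weqs_representable} this is precisely the condition for $\tilde u$ to factor through $\intEq_B(E_1,E_2)$, and that factorisation is the diagonal filler we want. This is where Lemma~\ref{lemma:connected_weq} does the work. Since $n \geq 1$, the horn $\Lambda^k[n]$ is nonempty and contains a vertex $v$; the composite $v \colon \Delta[0] \to \Lambda^k[n] \to \intEq_B(E_1,E_2)$ is a $0$-simplex of $\intEq_B(E_1,E_2)$, so by definition of that subobject the corresponding map of fibers over $b(v)$ — which is exactly the restriction of $u$ to $v$ — is a weak equivalence. As $\Delta[n]$ is connected, Lemma~\ref{lemma:connected_weq} then gives that $u$ itself is a weak equivalence over $\Delta[n]$, completing the filler.

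I do not expect a genuine obstacle here; the one point that must not be skipped is that the horn has to be filled first in the ambient fibration $\intHom_B(E_1,E_2)$ and the weak-equivalence condition verified only afterwards — attempting to fill directly inside $\intEq_B(E_1,E_2)$ would be circular. The hypothesis coming from the horn only controls $u$ over $\Lambda^k[n]$, so the essential ingredient is Lemma~\ref{lemma:connected_weq}, which upgrades fibrewise control at a single vertex to control over all of $\Delta[n]$ using nothing more than connectedness of $\Delta[n]$.
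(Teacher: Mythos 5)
Your proof is correct and is essentially the paper's argument: both first fill the horn in the fibration $\intHom_B(E_1,E_2) \to B$ (Lemma~\ref{lemma:HOM_is_fib}), then observe that the resulting map of pullbacks over $\Delta[n]$ is a weak equivalence by Lemma~\ref{lemma:connected_weq}, using connectedness of $\Delta[n]$ together with the fact that the given data over the horn already consists of weak equivalences. The only cosmetic difference is that you spell out the passage through $\intHom_B$ and the role of the chosen vertex $v$ explicitly, whereas the paper phrases the same reduction via the universal property of $\intEq_B$.
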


\begin{proof}
 Suppose we wish to fill a square:
 \[\xymatrix{ \Lambda^k[n] \ar[r] \ar@{^{(}->}[d]^i & \intEq_B(E_1,E_2) \ar[d] \\ % } to match brackets
  \Delta[n] \ar@{.>}[ur] \ar[r]^b & B \\
 }\]
 By the universal property of $\intEq_B(E_1,E_2)$ this corresponds to showing that we can extend a weak equivalence $w \colon i^*b^*E_1 \to i^*b^*E_2$ over $\Lambda^k[n]$ to a weak equivalence $\overline{w} \colon b^*E_1 \to b^*E_2$ over $\Delta[n]$.

 By Lemma~\ref{lemma:HOM_is_fib}, we can certainly find some map $\overline{w}$ extending $w$.  But then since $\Delta[n]$ is connected, Lemma~\ref{lemma:connected_weq} implies that $\overline{w}$ is a weak equivalence.
\end{proof}

While on the subject, we collect a proposition which is not required for the definition of univalence, but which will be useful later:
\begin{proposition} \label{prop:eq-respects-weq}
Suppose that $E_1,E_1',E_2,E_2'$ are fibrations over a common base $B$, and $w_1 \colon E'_1 \to E_1$, $w_2 \colon E_2 \to E'_2$ are weak equivalences over $B$.  Then the induced map $\intEq_B(w_1,w_2) \colon \intEq_B (E_1,E_2) \to \intEq_B (E'_1,E'_2)$ is a weak equivalence.

\[ \begin{tikzpicture}[hole/.style={fill=white,inner sep=1pt},x=1.5cm,y=1.5cm]
\node (E1') at (0,1) {$E'_1$};
\node (E1) at (1,1) {$E_1$};
\node (E2) at (2,1) {$E_2$};
\node (E2') at (3,1) {$E'_2$};
\node (B) at (1.5,0) {$B$};
\draw[->,font=\scriptsize] (E1') to node[hole] {$p'_1$} (B);
\draw[->,font=\scriptsize] (E1) to node[hole] {$p_1$} (B);
\draw[->,font=\scriptsize] (E2) to node[hole] {$p_2$} (B);
\draw[->,font=\scriptsize] (E2') to node[hole] {$p'_2$} (B);
\draw[->,font=\scriptsize,auto] (E1') to node {$w_1$} (E1);
\draw[->,font=\scriptsize,auto] (E2) to node {$w_2$} (E2');
\end{tikzpicture} \]
\end{proposition}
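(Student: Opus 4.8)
The plan is to play $\intEq_B(w_1,w_2)$ off against the more tractable map $\intHom_B(w_1,w_2)\colon \intHom_B(E_1,E_2)\to\intHom_B(E_1',E_2')$. First I would observe that the square
\[\xymatrix{
  \intEq_B(E_1,E_2) \ar@{^{(}->}[r] \ar[d]_{\intEq_B(w_1,w_2)} & \intHom_B(E_1,E_2) \ar[d]^{\intHom_B(w_1,w_2)} \\
  \intEq_B(E_1',E_2') \ar@{^{(}->}[r] & \intHom_B(E_1',E_2')
}\]
(horizontal maps the canonical inclusions) is a pullback. Indeed, an $n$-simplex $(b,u)$ of $\intHom_B(E_1,E_2)$ is sent to $(b,\ b^*w_2\cdot u\cdot b^*w_1)$; by Lemma~\ref{lemma:weqs_pull_back} the maps $b^*w_1$, $b^*w_2$ are weak equivalences, so by two-out-of-three $b^*w_2\cdot u\cdot b^*w_1$ is a weak equivalence precisely when $u$ is, i.e.\ $(b,u)$ lands over $\intEq_B(E_1',E_2')$ exactly when it already lies in $\intEq_B(E_1,E_2)$.

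Next I would reduce to fibres. By Lemma~\ref{lemma:HOM_is_fib} and Corollary~\ref{cor:eq_is_fib} all four objects in the square are fibrations over $B$ and all four maps lie over $B$; so by Lemma~\ref{lemma:connected_weq} it suffices to show $\intEq_B(w_1,w_2)$ is a weak equivalence on the fibre over one vertex $b$ in each component of $B$. Writing $F_i:=b^*E_i$, $F_i':=b^*E_i'$ (all Kan complexes), restricting the pullback square to the fibre over $b$ turns the top row into the inclusion $\extEq(F_1,F_2)\hookrightarrow F_2^{F_1}$ of the subspace of weak equivalences into the mapping space, the bottom row likewise, and the right-hand map into $(b^*w_2)_*(b^*w_1)^*\colon F_2^{F_1}\to(F_2')^{F_1'}$, which is a weak equivalence by the standard fact that mapping spaces of Kan complexes take weak equivalences in either variable to weak equivalences. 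Moreover $\extEq(F_1,F_2)$ is a \emph{union of connected components} of $F_2^{F_1}$: applying Lemma~\ref{lemma:connected_weq} with base $\Delta[n]$, an $n$-simplex of $F_2^{F_1}$ is a weak equivalence iff its restriction to one vertex is, and homotopic maps $F_1\to F_2$ are both or neither weak equivalences.

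It then remains to check that a weak equivalence $h\colon K\to K'$ of Kan complexes carrying a union of components $S=h^{-1}(S')\subseteq K$ onto (a subset of) a union of components $S'\subseteq K'$ restricts to a weak equivalence $S\to S'$. Since $S,S'$ are unions of components, $\pi_n$ at any basepoint agrees with that of $K$, resp.\ $K'$, so $h|_S$ is a $\pi_n$-isomorphism for $n\ge1$ and is $\pi_0$-injective (because $\pi_0(h)$ is). Surjectivity on $\pi_0$ is the one place the hypothesis on the $w_i$ is used: taking $K=F_2^{F_1}$, $S=\extEq(F_1,F_2)$ etc., given a weak equivalence $u'\colon F_1'\to F_2'$, choose two-sided homotopy inverses $\overline{w_i}$ of $b^*w_i$; then $\overline{w_2}\cdot u'\cdot\overline{w_1}\colon F_1\to F_2$ is a weak equivalence whose image under $(b^*w_2)_*(b^*w_1)^*$ is homotopic to $u'$. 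This yields the required weak equivalence on fibres, and hence the proposition.

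The main obstacle is not conceptual but organisational: the care needed in the fibre computation — correctly identifying the fibres of $\intHom_B$ and $\intEq_B$ over $b$, verifying that $\extEq(F_1,F_2)$ is clopen in $F_2^{F_1}$, and confirming that a weak equivalence restricted to matching unions of components remains a weak equivalence. Step~1's identification of the square as a pullback is the lynchpin that makes all of this go through.
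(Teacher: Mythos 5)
Your proof is correct, but takes a genuinely different route from the paper's. The paper's proof is a short homotopy-inverse argument: since $w_1,w_2$ are weak equivalences between fibrations over $B$, they admit fibered homotopy inverses $v_1,v_2$; the induced map $\intHom_B(v_1,v_2)$ is then a homotopy inverse for $\intHom_B(w_1,w_2)$, and one observes (via Lemma~\ref{lemma:connected_weq}) that any homotopy in $\intHom_B$ whose endpoints lie in $\intEq_B$ stays entirely within $\intEq_B$, so the inverse and homotopies restrict. Your argument instead exhibits $\intEq_B(w_1,w_2)$ as the pullback of $\intHom_B(w_1,w_2)$ along the inclusion $\intEq_B(E_1',E_2')\hookrightarrow\intHom_B(E_1',E_2')$, reduces to fibres over a vertex in each component of $B$ via Lemma~\ref{lemma:connected_weq}, identifies the fibrewise map as the standard pre-/post-composition map between mapping spaces of Kan complexes, and observes that the weak-equivalence subobject is a union of components. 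Both work; the paper's buys brevity from the global homotopy inverse, while yours is more explicit about what happens fibrewise and uses only the standard fact about mapping spaces of Kan complexes. One small redundancy: once you know the relevant square is a pullback, the restriction of $h$ over $b$ has $S=h^{-1}(S')$ for $S,S'$ unions of components, and then bijectivity of $\pi_0(h|_S)$ is immediate from bijectivity of $\pi_0(h)$ — the final paragraph's separate surjectivity argument with chosen homotopy inverses of the $b^*w_i$ is not needed.
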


\begin{proof}
As weak equivalences between fibrations, $w_1$ and $w_2$ are fibered homotopy equivalences over $B$.  Choosing fibered homotopy inverses $v_1$, $v_2$ for $w_1$ and $w_2$ respectively gives a homotopy inverse $\intHom_B(v_1,v_2)$ for $\intHom_B(w_1,w_2) \colon \intHom_B (E_1,E_2) \to \intHom_B (E'_1,E'_2)$.  But by Lemma~\ref{lemma:connected_weq}, the image of a homotopy in $\intHom$ whose endpoints lie in $\intEq$ must lie entirely in $\intEq$; so the restriction $\intEq_B(v_1,v_2)$  gives a homotopy inverse for $\intEq_B(w_1,w_2)$, as desired.
\end{proof}

We are now ready to define univalence.

Let $p \colon E \to B$ be a fibration.  We then have two fibrations over $B \times B$, given by pulling back $E$ along the projections.  Call the object of weak equivalences between these $\intEq(E) := \intEq_{B \times B}(\pi_1^*E,\pi_2^*E)$.  Concretely, simplices of $\intEq(E)$ are triples
\[ (b_1,b_2 \in B_n,\, w \colon b_1^*E \to b_2^*E ). \]

By Corollary~\ref{cor:weqs_representable}, a map $f \colon X \to \intEq(E)$ corresponds to a pair of maps $f_1, f_2 \colon X \to B$ together with a weak equivalence $f_1^*E \to f_2^*E$ over $X$.  In particular, there is a “diagonal” map $\delta_E \colon B \to \intEq(E)$ corresponding to the triple $(1_B,1_B,1_E)$, sending a simplex $b \in B_n$ to the triple $(b,b,1_{E_b})$.

There are also source and target maps $s,t \colon \intEq(E) \to B$, given by the composites $\intEq(E) \to B \times B \to^{\pi_i} B$, sending $(b_1,b_2,w)$ to $b_1$ and $b_2$ respectively.  These are both retractions of $\delta$; and by Corollary~\ref{cor:eq_is_fib}, if $B$ is fibrant then they are moreover fibrations.

\begin{definition} \label{def:simplicial-univalence}
 A fibration $p \colon E \to B$ is \emph{univalent} if the diagonal map $\delta_E \colon B \to \intEq (E)$ is a weak equivalence.
\end{definition}

Since $\delta_E$ is always a monomorphism (thanks to its retractions), this is equivalent to saying that $B \to \intEq(E) \to B \times B$ is a (trivial cofibration, fibration) factorisation of the diagonal $\Delta_B \colon B \to B \times B$, i.e.\ that $\intEq(E)$ is a \emph{path object} for $B$.

We conclude this section with a few examples, and non-examples, of univalent fibrations.

\begin{examples} \leavevmode
\begin{enumerate}
  \item The canonical map $X \to 1$ is univalent if and only if the space of homotopy auto-equivalences of $X$ is contractible.
% Remove unless we can find a good reference: In particular, for a group $G$, the map $\B G \to 1$ is univalent just if the center and group of outer automorphisms of $G$ are trivial, as for instance in the case $G = S_n$.
  \item The identity map $X \to X$ is univalent if and only if $X$ is either empty or contractible.  In particular, the identity map $1 + 1 \to 1 + 1$ is \emph{not} univalent: it has two fibers which are equivalent, over points that are not connected by any path.
  \item Any fibration weakly equivalent to a univalent fibration is itself univalent (essentially, by Proposition~\ref{prop:eq-respects-weq}).
\end{enumerate}
\end{examples}

\subsection{Equivalence of type-theoretic and simplicial univalence} \label{subsec:univalence-equivalence}

Having defined the type-theoretic and simplicial notions of univalence, we now wish to show that they coincide.  As ever, we make essential use of representability; in particular, we work with the interpretations of type-theoretic notions entirely via their universal properties.  With this in view, we need to define what are represented by the interpretations of $\synLHInv$, $\synisHIso$, etc.

\begin{definition}
Let $p_1 \colon E_1 \to B$, $p_2 \colon E_2 \to B$ be fibrations over a common base (as in Definition~\ref{def:internal-eq}).

Define $\extHomLHInv_B(E_1,E_2)$ to be the set of \emph{maps with a left homotopy inverse} from $E_1$ to $E_2$, i.e.\ triples $(f,g,H)$, where $f \colon E_1 \to E_2$ and $g \colon E_2 \to E_1$ are maps over $B$, and $H$ is a fibred homotopy from $g \cdot f$ to $1_{E_1}$, defined using the fibred path space $\paths_B(E_1)$ (as used for the $\IdStrux$-structure in the proof of Theorem~\ref{thm:the-model-in-ssets}).

Similarly, define $\extHomRHInv_B(E_1,E_2)$ to consist of triples $(f,g,H)$, where $f, g$ are as before, and $H$ is now a fibred homotopy from $f \cdot g$ to $1_{E_2}$, defined using $\paths_B(E_2)$.

Finally, these both come with evident projections to $\Hom_B(E_1,E_2)$; define $\extHIso_B(E_1,E_2)$ as the pullback $\extHomLHInv_B(E_1,E_2) \times_{\Hom_B(E_1,E_2)} \extHomRHInv_B(E_1,E_2).$
\end{definition}

\begin{lemma} \label{lemma:interpretation_correct}
Let $B,E_1,E_2$ be as above; additionally, suppose they are given by names $\name{B} \colon 1 \to \UU$, $\name{E_i} \colon B \to \UU$.  Then for any $f \colon X \to B$, there are horizontal isomorphisms as in the diagram below, making the diagram commute, and natural in $(X,f)$.

% editing note: width may be easily adjusted by changing defn of x at start.
\[\mathclap{\begin{tikzpicture}[x={(7cm,0cm)},y={(1.8cm,-1.8cm-1em)},z={(1.8cm,1.8cm-1em)}]
  % objects from the type theory:
  \node (tMap) at (0,0,0) {$\Hom_B(X,\interp{ [E_1,E_2] } )$};
  \node (tLInv) at (0,-1,0) {$\Hom_B(X,\interp{ \synHomLHInv(E_1,E_2) } )$};
  \node (tRInv) at (0,0,1) {$\Hom_B(X,\interp{ \synHomRHInv(E_1,E_2) }) $};
  \node (tHEq) at (0,-1,1) {$\Hom_B(X,\interp{ \synHIso(E_1,E_2) }) $};
  \draw[->] (tLInv) to (tMap);
  \draw[->] (tRInv) to (tMap);
  \draw[->] (tHEq) to (tLInv);
  \draw[->] (tHEq) to (tRInv);
  \draw (0,-0.92,0.8) -- (0,-0.84,0.8) -- (0,-0.84,0.9); % pullback sign
 % objects defined directly in sets:
  \node (sMap) at (1,0,0) {$\Hom_X(f^*E_1,f^*E_2)$};
  \node (sLInv) at (1,-1,0) {$\extHomLHInv_X(f^*E_1,f^*E_2)$};
  \node (sRInv) at (1,0,1) {$\extHomRHInv_X(f^*E_1,f^*E_2)$};
  \node (sHEq) at (1,-1,1) {$\extHIso_X(f^*E_1,f^*E_2)$};
  \draw[->] (sLInv) to (sMap);
  \draw[->] (sRInv) to (sMap);
  \draw[->] (sHEq) to (sLInv);
  \draw[->] (sHEq) to (sRInv);
  \draw (1,-0.92,0.8) -- (1,-0.84,0.8) -- (1,-0.84,0.9); % pullback sign
% connecting arrows:
  \draw[->,dotted] (tMap) to node {$\iso$} (sMap);
  \draw[->,dotted] (tLInv) to node {$\iso$}  (sLInv);
  \draw[->,dotted] (tRInv) to node {$\iso$}  (sRInv);
  \draw[->,dotted] (tHEq) to node {$\iso$}  (sHEq);
\end{tikzpicture}}\]

(Here $\interp{-}$ denotes the interpretation of type theory, as described following Corollary~\ref{cor:simplicial-model}; and $[-,-]$ is the ordinary function type, taken as a special case of $\Pi$-types.)
\end{lemma}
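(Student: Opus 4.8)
The plan is to compute both faces of the cube by unwinding the interpretations of the constructors involved, working throughout via universal properties rather than explicit simplices. Once $B,E_1,E_2$ are classified by $\UU$ as in the hypotheses, the four corners of the left-hand face are $\Hom_B(X,-)$ applied to the fibrations $\interp{[E_1,E_2]}$, $\interp{\synHomLHInv(E_1,E_2)}$, $\interp{\synHomRHInv(E_1,E_2)}$, $\interp{\synHIso(E_1,E_2)}$ over $B$. Since $\synHIso(E_1,E_2)$ is by definition $\Sigma_{u\oftype[E_1,E_2]}\bigl(\synLHInv(u)\times\synRHInv(u)\bigr)$, the way $\Sigma$-types and products of types are interpreted makes $\interp{\synHIso(E_1,E_2)}$ the fibre product $\interp{\synHomLHInv(E_1,E_2)}\times_{\interp{[E_1,E_2]}}\interp{\synHomRHInv(E_1,E_2)}$ in $\sSets/B$; as $\Hom_B(X,-)$ is representable it preserves this limit, so the left face is a pullback of sets. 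The right face is a pullback of sets by the definition of $\extHIso_X$. It therefore suffices to produce the three horizontal isomorphisms at the corners $[E_1,E_2]$, $\synHomLHInv(E_1,E_2)$, $\synHomRHInv(E_1,E_2)$, natural in $(X,f)$ and commuting with the downward maps to the $[E_1,E_2]$ corner; the fourth isomorphism, and commutativity of the whole cube, are then forced by the universal property of the pullback.

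For the $[E_1,E_2]$ corner: the model interprets the function type $[E_1,E_2]$ by dependent product, so $\interp{[E_1,E_2]}\iso\intHom_B(E_1,E_2)$ of Definition~\ref{def:internal-eq}. Chasing the adjunctions $\toposSigma_{p_1}\adjoint p_1^*\adjoint\toposPi_{p_1}$, together with the identifications of $X\times_B E_1\to E_1$ with $p_1^*X$ and of the restriction of $E_i$ to $X$ with $f^*E_i$, a map $X\to\intHom_B(E_1,E_2)$ over $B$ is the same as a map $f^*E_1\to f^*E_2$ over $X$. This is the bijection $\Hom_B(X,\interp{[E_1,E_2]})\iso\Hom_X(f^*E_1,f^*E_2)$; it is natural in $(X,f)$ because it is assembled from naturality of the adjunction isomorphisms.

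For the $\synHomLHInv(E_1,E_2)$ corner (the case of $\synHomRHInv$ being identical), unwind
\[ \synHomLHInv(E_1,E_2)\ =\ \Sigma_{u\oftype[E_1,E_2]}\Sigma_{v\oftype[E_2,E_1]}\synPi_{x\oftype E_1}\synId_{E_1}\bigl(v(u(x)),x\bigr). \]
As $\Sigma$-types are interpreted by composition of fibrations, a map $X\to\interp{\synHomLHInv(E_1,E_2)}$ over $B$ is exactly the data of a map $u\colon f^*E_1\to f^*E_2$ over $X$ (outer $\Sigma$, via the previous paragraph), a map $v\colon f^*E_2\to f^*E_1$ over $X$ (likewise), and a section over $X$ of $\toposPi_{f^*E_1\shortto X}$ of the pulled-back interpreted identity type. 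The decisive point is that the $\IdStrux$-structure on $\UU$ was built, in the proof of Theorem~\ref{thm:the-model-in-ssets}, from the fibred path object $\paths$, which is stable under base change by Proposition~\ref{prop:stable-fibered-path-obs}; hence the interpretation of $\synId_{E_1}(v(u(x)),x)$ pulled back to $X$ is $\paths_X(f^*E_1)$ reindexed along $(v\cdot u,\ 1_{f^*E_1})\colon f^*E_1\to f^*E_1\times_X f^*E_1$, and a section of its $\toposPi$ along $f^*E_1\shortto X$ is precisely a fibred homotopy $v\cdot u\homot 1_{f^*E_1}$ of the sort used to define $\extHomLHInv_X$. Reading off these three data yields the bijection $\Hom_B(X,\interp{\synHomLHInv(E_1,E_2)})\iso\extHomLHInv_X(f^*E_1,f^*E_2)$, which visibly commutes with the projections to the $[E_1,E_2]$ corner; naturality in $(X,f)$ comes from naturality of the adjunction isomorphisms together with the canonical pullback-stability of the fibred path object from Proposition~\ref{prop:stable-fibered-path-obs} (Beck--Chevalley). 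The corner for $\synHomRHInv$ is handled identically using $\paths_X(f^*E_2)$.

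The step that will require genuine care — and the expected main obstacle — is this $\synId$-computation: one must check that the object the model assigns to $\synId_{E_1}(v(u(x)),x)$, pulled back along $f$, is \emph{literally} $\paths_X(f^*E_1)$ reindexed along $(v\cdot u,\,1)$, so that applying $\toposPi_{f^*E_1\shortto X}$ turns its sections into exactly the fibred homotopies appearing in the definition of $\extHomLHInv_X$. This is precisely where the concrete choice of $\IdStrux$-structure on $\UU$ (taken from $\paths$ rather than from some other path object) and the stability statement of Proposition~\ref{prop:stable-fibered-path-obs} are both essential. The remaining work — bookkeeping of which objects live over $B$ and which over $X$, and checking that the $\toposSigma$/$\toposPi$ adjunctions line up across the cube — is routine, as is the observation that a representable functor preserves limits.
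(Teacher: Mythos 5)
Your proposal is correct and follows the same strategy as the paper: compute via universal properties, unwinding the interpretations of $\synPi$, $\synSigma$, and $\synId$ and matching them against the explicit definitions of $\intHom_B$, $\paths_B(-)$, and $\extHomLHInv$/$\extHomRHInv$. The paper's proof in fact only carries out the $[E_1,E_2]$ corner in detail (via Beck--Chevalley) and declares the rest "essentially a routine verification," so you have actually fleshed out more of the argument, including the useful economy that the fourth isomorphism is forced once the other three are produced compatibly, both faces being pullbacks; your identification of the $\synId$-step (matching the model's $\IdStrux$-structure built from $\paths$ with the fibred homotopies in $\extHomLHInv$, via the stability in Proposition~\ref{prop:stable-fibered-path-obs}) is exactly the piece the paper implicitly relies on but does not spell out.
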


\begin{proof} This is essentially a routine verification; we prove just the first case, that of $\interp{ [ {E_1}, {E_2}] }$.  For this, we need to give a natural isomorphism $\Hom_B( X, \interp{ [ {E_1}, {E_2}] } ) \iso \Hom_X(f^*E_1, f^*E_2)$; in other words, to show that $\interp{ [ {E_1}, {E_2}] }$ is the exponential between $E_1$ and $E_2$ in $\sSets/B$.

Recall that by definition, $\interp{ [ {E_1}, {E_2}] }$ is constructed as the pullback of $\UUt$ along the map $\PiStrux \cdot \name{(E_1,E_2)} \colon B \to \UU$:
\[\xymatrix{ 
\interp{ [{E_1}, {E_2} ] } \ar[d] \ar[r] \pb & \toposPi_{A_\gen \shortto B_\gen} B_\gen \ar[d] \ar[r] \pb & \UUt \ar[d] \\
B \ar[r]^-{\name{(E_1,E_2)}} & \UU^{\synPi\text{-}\form} \ar[r]^-\PiStrux & \UU
}\]

$\interp{ [ E_1, E_2] }$ is thus a pullback of the dependent product of the universal pair of fibrations over $\UU^{\synPi\text{-}\form}$, and so by the Beck-Chevalley condition is a dependent product for the pullbacks of these fibrations along $\name{(E_1,E_2)}$.  But these pullbacks are isomorphic to $E_1$, $E_1 \times_B E_2$, by the two pullbacks lemma and the construction of $A_\gen$, $B_\gen$ as pullbacks of $\UUt \to \UU$.

\[\mathclap{\begin{tikzpicture}[x={(6cm,0cm)},y={(-0.8cm,1.5cm+0.7em)},z={(0.6cm,1.5cm-0.7em)}]
  % objects from the type theory:
  \node (B) at (0,0,0) {$B$};
  \node (PiE1E2) at (0,1,0) {$\Pi_{E_1}(E_1^* E_2) $};
  \node (E1) at (0,0,1) {$E_1$};
  \node (E1E2) at (0,0,2) {$E_1^* E_2$};
  \draw[->] (PiE1E2) to (B);
  \draw[->] (E1) to (tMap);
  \draw[->] (E1E2) to (E1);
% objects defined directly in sets:
  \node (UPiForm) at (1,0,0) {$\UU^{\synPi\text{-}\form}$};
  \node (PiGen) at (1,1,0) {$\Pi_{A_\gen} B_\gen$};
  \node (AGen) at (1,0,1) {$A_\gen$};
  \node (BGen) at (1,0,2) {$B_\gen$};
  \draw[->] (PiGen) to (UPiForm);
  \draw[->] (AGen) to (UPiForm);
  \draw[->] (BGen) to (AGen);
% connecting arrows:
  \draw[->] (B) to (UPiForm);
  \draw[->] (PiE1E2) to (PiGen);
  \draw (0.04,0.74,0) -- (0.08,0.74,0) -- (0.08,0.87,0);
  \draw[->] (E1) to (AGen);
  \draw (0.04,0,0.66) -- (0.08,0,0.66) -- (0.08,0,0.83);
  \draw[->] (E1E2) to (BGen);
  \draw (0.04,0,1.66) -- (0.08,0,1.66) -- (0.08,0,1.83);
\end{tikzpicture} } \]

So $\interp{ [ {E_1}, {E_2}] }$ is the dependent product of $E_1 \times_B E_2 \to E_1$ along $E_1 \to B$; but this is exactly the usual construction of exponentials in slices from dependent products \cite[A1.5.2]{johnstone:elephant-i}.
\end{proof}

We also note, from the proof of the preceding lemma:
\begin{corollary}
There is a natural isomorphism over $B$:
\[\interp{ [E_1,E_2] } \iso \intHom_B(E_1,E_2). \extraqedhere \]
\end{corollary}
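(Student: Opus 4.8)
The plan is to read this off directly from the first case treated in the proof of Lemma~\ref{lemma:interpretation_correct}. There it is shown that $\interp{[E_1,E_2]}$, regarded as an object of $\sSets/B$, represents the functor $(\sSets/B)^{\op} \to \Sets$ sending $(X, f\colon X \to B)$ to $\Hom_X(f^*E_1, f^*E_2)$, the bijection being natural in $(X,f)$; in fact that argument identifies $\interp{[E_1,E_2]}$, up to canonical isomorphism, with the exponential of $E_1$ and $E_2$ in $\sSets/B$. On the other hand, Definition~\ref{def:internal-eq} stipulates precisely that $\intHom_B(E_1,E_2)$, again as an object over $B$, represents this same functor.

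So the first step is simply to observe that $\interp{[E_1,E_2]}$ and $\intHom_B(E_1,E_2)$ represent one and the same $\Sets$-valued functor on $\sSets/B$. The second step is then to apply the Yoneda lemma, in the slice category $\sSets/B$: a representable functor determines its representing object up to unique compatible isomorphism, so there is a canonical isomorphism $\interp{[E_1,E_2]} \iso \intHom_B(E_1,E_2)$ in $\sSets/B$. Being an isomorphism in the slice, it commutes with the two projections to $B$, which gives the asserted isomorphism over $B$; naturality in $B$, $E_1$, $E_2$ is then inherited from the naturality of the two universal properties.

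I do not expect any real obstacle here — this is a bookkeeping corollary, which is why it need not be proved afresh. The one point worth a moment's care is that the representability established in Lemma~\ref{lemma:interpretation_correct} is genuinely a statement in $\sSets/B$ (the relevant hom-sets are slice hom-sets and the naturality is naturality over $B$), so that the comparison isomorphism produced really is a map over $B$ rather than merely of underlying simplicial sets; but this is already apparent from the formulation of that lemma.
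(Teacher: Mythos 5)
Your proof is essentially the paper's: the corollary is stated with a \qed precisely because it is an immediate reading-off from the proof of Lemma~\ref{lemma:interpretation_correct}, which already identifies $\interp{[E_1,E_2]}$ as the exponential (equivalently, the representing object of the relevant functor) in $\sSets/B$, while $\intHom_B(E_1,E_2)$ is the same exponential by Definition~\ref{def:internal-eq}. Your extra care about the naturality and the isomorphism being over $B$ is a reasonable gloss but adds nothing beyond what the paper intends.
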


Following this, we define $\intHIso_B(E_1,E_2) := \interp{ \synHIso_B(E_1,E_2) }$, and $\intHomLHInv$, $\intHomRHInv$ similarly.

\begin{lemma} \label{lemma:hiso-over-eq}
The map $ \intHIso_B(E_1,E_2) \to \intHom_B(E_1,E_2)$ factors through $\intEq_B(E_1,E_2)$; and the resulting map $ \intHIso_B(E_1,E_2) \to \intEq_B(E_1,E_2)$ is a trivial fibration.
\end{lemma}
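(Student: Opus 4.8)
The plan is to prove this in three stages: the factorisation, a reduction to the two ``one-sided-inverse'' objects, and a mapping-path-space argument for those. For the factorisation I would argue simplexwise. By Lemma~\ref{lemma:interpretation_correct} an $n$-simplex of $\intHIso_B(E_1,E_2)$ is an h-isomorphism over $\Delta[n]$, so its underlying map $u\colon b^*E_1\to b^*E_2$ carries both a left and a right fibered homotopy inverse; these two one-sided inverses are fibered-homotopic (via the usual zig-zag $g\homot gfg'\homot g'$), so $u$ is a fibered homotopy equivalence over $\Delta[n]$. Since $b^*E_1,b^*E_2$ are fibrant and every object is cofibrant in $\sSets/\Delta[n]$, Whitehead's theorem makes $u$ a weak equivalence; hence, by the definition of $\intEq_B(E_1,E_2)$ as a subobject of $\intHom_B(E_1,E_2)$, the projection $\intHIso_B(E_1,E_2)\to\intHom_B(E_1,E_2)$ factors through $\intEq_B(E_1,E_2)$.

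Next I would reduce. As $\synisHIso(f)=\synLHInv(f)\times\synRHInv(f)$, one has $\intHIso_B(E_1,E_2)\iso\intHomLHInv_B(E_1,E_2)\times_{\intHom_B(E_1,E_2)}\intHomRHInv_B(E_1,E_2)$ over $B$. Pulling back along the mono $\intEq_B(E_1,E_2)\hookrightarrow\intHom_B(E_1,E_2)$ and using the factorisation just obtained rewrites this, over $Q:=\intEq_B(E_1,E_2)$, as $L\times_Q R$, where $L:=\intHomLHInv_B(E_1,E_2)\times_{\intHom_B(E_1,E_2)}Q$ and $R:=\intHomRHInv_B(E_1,E_2)\times_{\intHom_B(E_1,E_2)}Q$. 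As trivial fibrations are stable under pullback and composition, it suffices to prove that $L\to Q$ and $R\to Q$ are trivial fibrations.

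Finally, the main step — and the expected obstacle — is to handle $R$ (then $L$ symmetrically, with precomposition in place of postcomposition). Let $e\colon Q\to B$ be the structure map and $w_\gen\colon e^*E_1\to e^*E_2$ the universal weak equivalence over $Q$ supplied by Corollary~\ref{cor:weqs_representable}. Working in $\sSets/Q$, set $F:=\intHom_Q(e^*E_2,e^*E_1)$ and $G:=\intHom_Q(e^*E_2,e^*E_2)$, both fibrations over $Q$ by Lemma~\ref{lemma:HOM_is_fib}. Postcomposition with $w_\gen$ gives $(w_\gen)_*\colon F\to G$ over $Q$; since $w_\gen$ is a weak equivalence between fibrant objects of $\sSets/Q$ it is a fibered homotopy equivalence, and since $\intHom_Q(e^*E_2,-)$ preserves fibered homotopies, $(w_\gen)_*$ is a homotopy equivalence, hence a weak equivalence. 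I would then factor $(w_\gen)_*$ through its fibered mapping path space $F\to^{j} N\to^{\bar q} G$, with $N:=F\times_{G}\paths_Q(G)$ built from the stable fibered path object $\paths_Q(G)$ of Proposition~\ref{prop:stable-fibered-path-obs}: here $\bar q$ is a fibration (using that $(s,t)\colon\paths_Q(G)\to G\times_Q G$ is one, and that the relevant pullback projections are), and $j$ is a trivial cofibration (a section of the source map $\paths_Q(G)\to G$, which is a trivial fibration by Proposition~\ref{prop:stable-fibered-path-obs}), so $\bar q$ is a trivial fibration by two-out-of-three. Pulling $\bar q$ back along the section $\sigma\colon Q\to G$ picking out $1_{e^*E_2}$ yields a trivial fibration $\sigma^*N\to Q$, and the last thing to check is that $\sigma^*N\iso R$ over $Q$: unwinding, a point of $\sigma^*N$ is precisely a map $g$ together with a fibered homotopy $w_\gen\cdot g\homot 1$, which is exactly a right-homotopy-inverse datum (this uses stability of internal homs and of $\paths$ under pullback). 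The delicate part throughout is keeping the fibered-homotopy-equivalence bookkeeping correct over the possibly non-fibrant base $Q$, and matching the mapping-path-space construction with $\intHomRHInv_B(E_1,E_2)$ on the nose; everything else is formal.
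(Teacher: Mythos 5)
Your proof is correct, and at its heart it takes a genuinely different route than the paper. Both approaches make the same first two moves: the factorisation through $\intEq_B(E_1,E_2)$, and the reduction to showing $\intEqLHInv_B(E_1,E_2)\to\intEq_B(E_1,E_2)$ and $\intEqRHInv_B(E_1,E_2)\to\intEq_B(E_1,E_2)$ are trivial fibrations. But the paper then treats these two maps by distinct \emph{external} lifting arguments (Lemmas~\ref{lemma:lhinv-over-eq} and~\ref{lemma:rhinv-over-eq}): for $\intEqLHInv_B$ it works with a colimit decomposition of the lifting square and the pushout-product property, while for $\intEqRHInv_B$ it reduces a lift against a given cofibration $i\colon Y\into X$ to the target map of the fibred mapping path space of the specific $\bar{w}\colon F_1\to F_2$ over $X$. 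Those two arguments are genuinely asymmetric, because the homotopy datum lives in $F_1$ in one case and in $F_2$ in the other, and everything is done at the level of the fibrations themselves. Your proof instead internalises and universalises: over $Q=\intEq_B(E_1,E_2)$ you apply the mapping-path-space factorisation not to $w_{\gen}$ but to the induced map of internal hom-objects $(w_{\gen})_*\colon\intHom_Q(e^*E_2,e^*E_1)\to\intHom_Q(e^*E_2,e^*E_2)$, and identify $\intEqRHInv_B(E_1,E_2)$ as a pullback of the resulting trivial fibration along the section naming the identity. Passing to hom-objects restores the symmetry, so $\intEqLHInv_B$ follows by the literally parallel argument with precomposition $(w_{\gen})^*$. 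What this buys is uniformity and economy --- one model-categorical mechanism (mapping path space of a weak equivalence, plus stability of trivial fibrations under pullback) does both halves, replacing the paper's two separate proofs --- at the cost of the up-front bookkeeping you correctly flag: that $w_{\gen}$ is a fibred homotopy equivalence (Whitehead over the possibly non-fibrant $Q$, where all objects are still cofibrant), that $\intHom_Q$ preserves fibred homotopies in each variable, and a Yoneda verification that the pullback $\sigma^*N$ is $\intEqRHInv_B(E_1,E_2)$ on the nose, which rests on the canonical compatibility of $\intHom$ and $\paths$ with base change. All of these points do go through, so the argument is sound.
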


\begin{proof}
The given map  $ \intHIso_B(E_1,E_2) \to [E_1,E_2] \iso \intHom_B(E_1,E_2)$ corresponds, under the isomorphisms of Lemma~\ref{lemma:interpretation_correct}, to the maps on hom-sets
\begin{equation}
\begin{split}
\Hom_B (X, \intHIso_B (E_1, E_2)) & \iso \extHIso_X (f^*E_1, f^*E_2) \\
& \to \Hom_X (f^*E_1, f^*E_2) \\
& \iso \Hom_B (X, \intHom_B(E_1, E_2)) 
\end{split}
\end{equation}
where the middle map just forgets the chosen homotopy inverses of an h-isomorphism.  But since any map admitting both homotopy inverses is a weak equivalence, the natural map
\[\extHIso_X (f^*E_1, f^*E_2) \to \Hom_X (f^*E_1, f^*E_2)\]
factors through $\extEq_X (f^*E_1, f^*E_2)$; so by Yoneda, $\intHIso_B(E_1,E_2) \to \intHom_B(E_1,E_2)$ factors through $\intEq_B(E_1,E_2)$.
 
Thus, we obtain the desired map $\intHIso_B(E_1, E_2) \to \intEq_B (E_1, E_2)$, corresponding to the forgetful function $\extHIso_X (f^*E_1, f^*E_2) \to \extEq_X (f^*E_1, f^*E_2)$.

Combining this with the left-hand pullback square in Lemma~\ref{lemma:interpretation_correct}, we can consider $\intHIso_B(E_1,E_2)$ as the pullback:
\[\mathclap{\begin{tikzpicture}[x={(-2.4cm,1.6cm)},y={(2.8cm,1.2cm)},z={(0,2.4cm)}]
 \node (Hom) at (0,0,0) {$\intHom_B(E_1,E_2)$};
  \node (HomLInv) at (1,0,0) {$\intHomLHInv_B(E_1,E_2)$};
  \draw[->] (HomLInv) to (Hom);
  \node (HomRInv) at (0,1,0) {$\intHomRHInv_B(E_1,E_2)$};
  \draw[->] (HomRInv) to (Hom);
  \node (Eq) at (0,0,1) {$\intEq_B(E_1,E_2)$};
  \draw[->] (Eq) to (Hom);
  \node (EqLInv) at (1,0,1) {$\intEqLHInv_B(E_1,E_2)$};
  \draw[->] (EqLInv) to (HomLInv);
  \draw[->] (EqLInv) to (Eq);
  \draw (0.9,0,0.8) -- (0.8,0,0.8) -- (0.8,0,0.9); 
  \node (EqRInv) at (0,1,1) {$\intEqRHInv_B(E_1,E_2)$};
  \draw[->] (EqRInv) to (Eq);
  \draw[->] (EqRInv) to (HomRInv);
  \draw (0,0.9,0.8) -- (0,0.8,0.8) -- (0,0.8,0.9); 
  \node (HIso) at (1,1,1) {$\intHIso_B(E_1,E_2)$};
  \draw[->] (HIso) to (EqLInv);
  \draw[->] (HIso) to (EqRInv);
  \draw (0.9,0.8,1) -- (0.8,0.8,1) -- (0.8,0.9,1); 
\end{tikzpicture} } \]
where $\intEqLHInv$, $\intEqRHInv$ are defined by the pullbacks above, and represent weak equivalences equipped with a left (resp.\ right) homotopy inverse.  To show that $\intHIso_B(E_1, E_2) \to$ $\intEq_B (E_1, E_2)$ is a trivial fibration, it thus suffices to show that the maps
 \begin{eqnarray*}
   \intEqLHInv_B (E_1, E_2) & \to & \intEq_B (E_1, E_2) \\
   \intEqRHInv_B (E_1, E_2) & \to & \intEq_B (E_1, E_2)
\end{eqnarray*}
are each trivial fibrations.  We show this in the following two lemmas. 

\begin{lemma} \label{lemma:lhinv-over-eq}
For $B$, $E_1$, $E_2$ as above, the map 
\[\intEqLHInv_B (E_1, E_2) \to \intEq_B (E_1, E_2)\]
is a trivial fibration.  Equivalently, left homotopy inverses to equivalences between fibrant objects extend along cofibrations.
\end{lemma}

\begin{proof}
For $\intEqLHInv_B (E_1, E_2) \to \intEq_B (E_1, E_2)$, we need to find a filler for any diagram of the form
 \[\xymatrix{Y \ar[rr] \ar@{^{(}->}[d]_i & & \intEqLHInv_B (E_1, E_2) \ar[d] \\
 X \ar[rr] \ar@{..>}[rru] & & \intEq_B(E_1, E_2)  }\]
where  $i \colon Y \into X$ is a cofibration.

Writing $f$ for the induced map $X \to B$ and $F_i$ for $f^*E_i$, this square corresponds (by the universal properties of $\intEq$ and $\intEqLHInv$) to a weak equivalence $\bar{w} \colon F_1 \to F_2$, and a fibered left homtopy inverse to $w := i^* \bar{w}$; that is, $l \colon i^* F_2 \to i^* F_1$, and a homotopy $H \colon l \cdot w \homot 1_{i^*F_1}$, all fibered over $Y$:
\[\begin{tikzpicture}[hole/.style={fill=white,inner sep=1pt},x={(1.5cm,0cm)},y={(0cm,1.8cm)},z={(1.8cm,-0.7cm)}]
  % part over Y:
  \node (Y) at (0,0,0) {$Y$};
  \node (iF1) at (0,1,-0.5) {$i^*F_1$};
  \node (iF2) at (0,1,0.5) {$i^*F_2$};
  \draw[->] (iF1) to (Y);
  \draw[->] (iF2) to (Y);
  \draw[->,bend left=12,font=\scriptsize] (iF1) to node[hole] {$w$} (iF2);
  \draw[->,bend left=32,font=\scriptsize] (iF2) to node[hole] {$l$} (iF1);
%  \draw[->,bend left=10,font=\scriptsize,double equal sign distance,-implies,shorten >=1pt,shorten <=3pt]
%      (iF2) to node[hole] {$H$} (iF1); 
  % part over X:
  \node (X) at (2,0,0) {$X$};
  \node (F1) at (2,1,-0.5) {$F_1$};
  \node (F2) at (2,1,0.5) {$F_2$};
  \draw[->] (F1) to (X);
  \draw[->] (F2) to (X);
  \draw[->,bend left=12,auto] (F1) to node {$\scriptstyle \overline{w}$} (F2);
  % connecting arrows:
  \draw[c'->] (Y) to (X);
  \draw[c'->] (iF2) to (F2);
  \draw (0.2,0.7,0.35) -- (0.4,0.7,0.35) -- (0.4,0.85,0.425); % pullback sign
  \draw[c'->] (iF1) to (F1);
%  \draw (0.2,0.8,-0.4) -- (0.4,0.8,-0.4) -- (0.4,0.9,-0.45); % pullback sign
\end{tikzpicture}\]
A filler then corresponds to a fibered left homotopy inverse $(\bar{l},\bar{H})$ to $\bar{w}$, extending $(l,H)$.

These data and desiderata may be summed up in a single commuting diagram:
\[\begin{tikzpicture}[x={(1.8cm,0)},y={(0,-1.2cm)}]
% top left:
\node (F1tl) at (2,0) {$F_1$};
\node (iF1) at (1,0) {$i^* F_1$};
\node (iF1xI) at (1,1) {$i^* F_1 \times \Delta[1]$};
\node (iF1') at (0,1) {$i^* F_1$};
\node (iF2) at (0,2) {$i^*F_2$};
\draw[right hook->,auto] (iF1) to (F1tl);
\draw[->,auto] (iF1) to node {$\scriptstyle \iota_1$} (iF1xI);
\draw[->,auto] (iF1') to node {$\scriptstyle \iota_0$} (iF1xI);
\draw[->,auto] (iF1') to node {$\scriptstyle w$} (iF2);
% top right:
\node (iF1tr) at (4.5,1) {$i^*F_1$};
\node (F1tr) at (6,1) {$F_1$};
\draw[->] (iF1tr) to (F1tr);
% bottom left:
\node (F1xI) at (1.5,5) {$F_1 \times \Delta[1]$};
\node (F1bl) at (0.5,5) {$F_1$};
\node (F2) at (0.5,4) {$F_2$};
\draw[->,auto] (F1bl) to node {$\scriptstyle \bar{w}$} (F2);
\draw[->,auto] (F1bl) to node {$\scriptstyle \iota_0$} (F1xI);
% bottom right;
\node (F1br) at (4.5,5) {$F_1$};
\node (X) at (6,5) {$X$};
\draw[->] (F1br) to (X);
% top row:
\draw[->,auto] (F1tl) to node {$\scriptstyle 1$} (F1tr);
\draw[->,auto] (iF1xI) to node {$\scriptstyle H$} (iF1tr);
\draw[->,auto] (iF2) to node {$\scriptstyle l$} (iF1tr);
% left column:
\draw[right hook->] (iF2) to (F2);
\draw[right hook->] (iF1xI) to (F1xI);
\draw[->,auto] (F1tl) to node {$\scriptstyle \iota_1$} (F1xI);
% bottom row:
\draw[->,auto] (F1xI) to node {$\scriptstyle \pi_1$} (F1br);
\draw[->] (F2) to (X);
% right column:
\draw[->] (F1tr) to (X);
% diagonal:
\draw[->,dotted,auto] (F2) to node {$\scriptstyle \bar{l}$} (F1tr);
\draw[->,dotted,auto] (F1xI) to node {$\scriptstyle  H$} (F1tr);
\end{tikzpicture}\]

Replacing the sub-diagrams on the left by their colimits, we see that we seek precisely a diagonal filler for an associated square:
\[\xymatrix{
 i^*F_2 +_{i^*F_1} (i^*F_1 \times \Delta[1]) +_{i^*F_1} F_1 \ar[rr] \ar[d] & & F_1 \ar[d] \\
 F_2 +_{F_1} (F_1 \times \Delta[1]) \ar@{..>}[urr] \ar[rr] & & X
}\]
So since $F_1 \to X$ is a fibration, we just need to show that the left-hand map of pushouts, induced by
\[\begin{tikzpicture}[x={(1.8cm,0)},y={(0,-1.2cm)}]
% top left:
\node (F1tl) at (2,0) {$F_1$};
\node (iF1) at (1,0) {$i^* F_1$};
\node (iF1xI) at (1,1) {$i^* F_1 \times \Delta[1]$};
\node (iF1') at (0,1) {$i^* F_1$};
\node (iF2) at (0,2) {$i^*F_2$};
\draw[->,auto] (iF1) to (F1tl); % should be “into”
\draw[->,auto] (iF1) to node {$\scriptstyle \iota_1$} (iF1xI);
\draw[->,auto] (iF1') to node {$\scriptstyle \iota_0$} (iF1xI);
\draw[->,auto] (iF1') to node {$\scriptstyle w$} (iF2);
% bottom left:
\node (F1xI) at (2.5,2.5) {$F_1 \times \Delta[1]$};
\node (F1bl) at (1.5,2.5) {$F_1$};
\node (F2) at (1.5,3.5) {$F_2$};
\draw[->,auto] (F1bl) to node {$\scriptstyle \bar{w}$} (F2);
\draw[->,auto] (F1bl) to node {$\scriptstyle \iota_0$} (F1xI);
% left column:
\draw[right hook->] (iF2) to (F2);
\draw[right hook->] (iF1') to (F1bl);
\draw[right hook->] (iF1xI) to (F1xI);
\draw[->,auto] (F1tl) to node {$\scriptstyle \iota_1$} (F1xI);
\end{tikzpicture} \]
is a trivial cofibration.  For convenience, call this map $t$.
 
To see that $t$ is a weak equivalence, consider it in the square
\[\xymatrix{
 (i^*F_1 \times \Delta[1]) +_{i^*F_1} F_1 \ar[r] \ar[d] & F_1 \times \Delta[1] \ar[d] \\
 i^*F_2 +_{i^*F_1} ((i^*F_1 \times \Delta[1] ) +_{i^*F_1} F_1) \ar[r]^-t & F_2 +_{F_1} (F_1 \times \Delta[1]).
}\]
The top map is a trivial cofibration by the pushout-product property; the vertical maps are pushouts of $w$ and $\bar{w}$ along cofibrations, so are also weak equivalences; and so by 2-out-of-3, $t$ is a weak equivalence.

On that other hand, to see that $t$ is a cofibration, consider it as induced by maps $t_0$, $t_1$ as in:
\[\xymatrix{
 i^*F_1 \ar[r] \ar[d] & F_1 \ar[d]^{t_1} \\
 i^*F_2 +_{i^*F_1} (i^*F_1 \times \Delta[1] ) \ar[r]^-{t_0} & F_2 +_{F_1} (F_1 \times \Delta[1]).
}\]
Here $t_0$ is isomorphic to the inclusion 
\[ i^*(F_2 +_{F_1} (F_1 \times \Delta[1])) \into F_2 +_{F_1} (F_1 \times \Delta[1])\]
(since pulling back preserves products and pushouts), so is mono.  Next, $i_0$ and $i_1$ have disjoint images, so $t_1$ is also mono.  Finally, the intersection of the images of $t_0$ and $t_1$ is exactly the image of $i^*F_1$; so $t$, as the induced map from $(i^*F_2 +_{i^*F_1} (i^*F_1 \times \Delta[1] )) +_{i^*F_1} F_1$, is mono as desired.

Thus $t$ is a trivial cofibration, completing the proof of the lemma.
\end{proof}
 
\begin{lemma} \label{lemma:rhinv-over-eq}
For $B$, $E_1$, $E_2$ as above, the map 
\[ \intEqRHInv_B (E_1, E_2) \to \intEq_B (E_1, E_2)\]
is a trivial fibration.  Equivalently, right homotopy inverses to equivalences between fibrant objects extend along cofibrations.
\end{lemma}
 
\begin{proof}
We must provide lifts against any cofibration $i \colon Y \into X$:
 \[\xymatrix{Y \ar[rr] \ar@{^{(}->}[d]_i & & \intEqRHInv_B (E_1, E_2) \ar[d] \\
 X \ar[rr] \ar@{..>}[rru] & & \intEq_B(E_1, E_2) }\]

Analogously to the previous lemma, and again writing $f \colon X \to B$, $F_i := f^* E_1$, the square corresponds to a weak equivalence $\bar{w} \colon F_1 \to F_2$ over $X$ together with a fibered right homotopy inverse to $w := i^* \bar{w}$, i.e.\ $r \colon i^*F_2 \to i^*F_1$ and a homotopy $H \colon w \cdot r \homot 1_{i^*F_2}$ over $Y$;
\[\begin{tikzpicture}[hole/.style={fill=white,inner sep=1pt},x={(1.5cm,0cm)},y={(0cm,1.8cm)},z={(1.8cm,-0.7cm)}]
  % part over Y:
  \node (Y) at (0,0,0) {$Y$};
  \node (iF1) at (0,1,-0.5) {$i^*F_1$};
  \node (iF2) at (0,1,0.5) {$i^*F_2$};
  \draw[->] (iF1) to (Y);
  \draw[->] (iF2) to (Y);
  \draw[->,bend left=12,font=\scriptsize] (iF1) to node[hole] {$w$} (iF2);
  \draw[->,bend left=32,font=\scriptsize] (iF2) to node[hole] {$r$} (iF1);
%  \draw[->,bend right=10,font=\scriptsize,double equal sign distance,-implies,shorten >=1pt,shorten <=3pt]
%      (iF1) to node[hole] {$H$} (iF2); 
  % part over X:
  \node (X) at (2,0,0) {$X$};
  \node (F1) at (2,1,-0.5) {$F_1$};
  \node (F2) at (2,1,0.5) {$F_2$};
  \draw[->] (F1) to (X);
  \draw[->] (F2) to (X);
  \draw[->,bend left=12,auto] (F1) to node {$\scriptstyle \overline{w}$} (F2);
  % connecting arrows:
  \draw[c'->] (Y) to (X);
  \draw[c'->] (iF2) to (F2);
  \draw (0.2,0.7,0.35) -- (0.4,0.7,0.35) -- (0.4,0.85,0.425); % pullback sign
  \draw[c'->] (iF1) to (F1);
%  \draw (0.2,0.8,-0.4) -- (0.4,0.8,-0.4) -- (0.4,0.9,-0.45); % pullback sign
\end{tikzpicture}\]
and a filler corresponds to a fibered right homotopy inverse $(\bar{r},\bar{H})$ for $\bar{w}$, extending $(r,H)$.

Again, putting these conditions together, we see that they correspond to filling another square:
\[\xymatrix{
  i^*F_2 \ar[r]^-{(r,H)} \ar[d] & i^*F_1 \times_{i^*F_2} \paths_Y (i^*F_2) \ar[r] & F_1 \times_{F_2} \paths_X F_2  \ar[d]^{\ev_1 \cdot \pi_1} \\
  F_2 \ar[rr]^1 \ar@{..>}[urr]|{(\bar{r},\bar{H})} & & F_2
}\]
where the pullbacks are just the fibered mapping path spaces.
\[\xymatrix{
  i^*F_1 \times_{i^*F_2} \paths_Y (i^*F_2) \ar[d] \ar[r] \pb & \paths_Y(i^*F_2) \ar[d]^{\ev_0} \\
  i^*F_1 \ar[r]^w  & i^*F_2
} \qquad
\xymatrix{
  F_1 \times_{F_2} \paths_X F_2 \ar[d] \ar[r] \pb & \paths_Y F_2 \ar[d]^{\ev_0} \\
  F_1 \ar[r]^{\bar{w}}  & F_2
}\]

Now $i^*F_2 \into F_2$ is certainly a cofibration; so to provide the filler, it suffices to show that the right-hand map is a trivial fibration.  As the target map from a mapping path space, it is certainly a fibration.  To see that it is a weak equivalence, consider the triangle 
\[ \xymatrix@C=3cm{
  F_1 \ar[r]^-{x\; \mapsto\; (x,c_{\bar{w}x})} \ar[dr]_{\bar{w}} & F_1 \times_{F_2} \paths_X F_2 \ar[d]^{\ev_0} \\
  & F_2 
}\]
The top map is the inclusion of a deformation retraction, so is a weak equivalence; so by 2-out-of-3, the source map $\ev_0$ is a  weak equivalence.  But $\ev_1$ is homotopic to $\ev_0$, so is also a weak equivalence, as required.
\end{proof}

Putting these two lemmas together concludes the proof of Lemma~\ref{lemma:hiso-over-eq}: $\intHIso$ is trivially fibrant over $\intEq$.
\end{proof}

\begin{theorem} \label{thm:tt_vs_simpl_univalence}
Let $B$ be a Kan complex, $p \colon E \to B$ a fibration; choose some names $\name{B} \colon 1 \to \UU$, $\name{E} \colon B \to \UU$ for these.  Then $E$ is simplicially univalent if and only if the type $\synisUnivalent(E)$ is inhabited in the model.
\end{theorem}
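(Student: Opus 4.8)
The plan is to route both conditions through a single intermediate statement: that the interpretation $\interp{w}$ of the comparison map $w_{x,y}\colon\synId_A(x,y)\to\synHIso(E(x),E(y))$ is a weak equivalence over $B\times B$. To set this up, note first that $\synId_A(x,y)$ is a type in the context $x,y\oftype A$, so its interpretation $W:=\interp{\synId_A}$ is a fibration over $\interp{A}\times\interp{A}=B\times B$; by the construction of the $\IdStrux$-structure on $\UU$ from fibered path objects (Proposition~\ref{prop:stable-fibered-path-obs}), $W\to B\times B$ is a path object for $B$, with the reflexivity term interpreting as a trivial cofibration $r\colon B\to W$ splitting it. On the other hand, $\synHIso(E(x),E(y))$, interpreted over $B\times B$, is by definition and Lemma~\ref{lemma:interpretation_correct} the fibration $\intHIso(E):=\intHIso_{B\times B}(\pi_1^*E,\pi_2^*E)$. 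Thus $w_{x,y}$ interprets as a map $\interp{w}\colon W\to\intHIso(E)$ over $B\times B$.

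\textbf{Comparison with $\delta_E$.} The key step is to identify $\interp{w}$, up to weak equivalence, with $\delta_E$. Let $q\colon\intHIso(E)\to\intEq(E)$ be the trivial fibration of Lemma~\ref{lemma:hiso-over-eq}, which forgets the chosen inverses and records only the underlying map, now viewed as a weak equivalence. I claim $q\cdot\interp{w}\cdot r=\delta_E$ as maps $B\to\intEq(E)$ over $B\times B$. By Corollary~\ref{cor:weqs_representable} it suffices to check both sides classify the same pair (base map to $B\times B$, weak equivalence over $B$): the base map is the diagonal $\Delta_B$ on both sides (for the left side because $W\to B\times B$ sends constant paths to diagonal simplices), and the weak equivalence is $1_E$ on both sides — for the left side because, by the $\synId$-computation rule, $\interp{w}\cdot r$ classifies the canonical $h$-isomorphism structure on $1_{E_b}$ over each $b\in B$, whose underlying map is $1_{E_b}$. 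Now $r$ is a weak equivalence (it splits a path object) and $q$ is a weak equivalence (being a trivial fibration), so by two-out-of-three, $\delta_E$ is a weak equivalence if and only if $\interp{w}$ is. Hence $E$ is simplicially univalent iff $\interp{w}$ is a weak equivalence in $\sSets/(B\times B)$. (Here $W\to B\times B$ is a fibration since $B$ is Kan, and $\intHIso(E)\to B\times B$ is a fibration by Lemma~\ref{lemma:hiso-over-eq} and Corollary~\ref{cor:eq_is_fib}, so all objects in play are fibrant in $\sSets/(B\times B)$, where all objects are also cofibrant.)

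\textbf{Comparison with inhabitedness.} It remains to see that $\synisUnivalent(E)$ is inhabited in the model iff $\interp{w}$ is a weak equivalence over $B\times B$. Interpreting $\synisUnivalent(E)=\synPi_{x,y\oftype A}\synisHIso(w_{x,y})$ in the empty context and transposing the $\toposPi$ along $B\times B\to 1$, a global point of the interpretation is exactly a section over $B\times B$ of $\interp{\synisHIso(w_{x,y})}$. Unwinding the definitions of $\synisHIso$, $\synLHInv$, $\synRHInv$, and using once more that the interpretation of $\synId$ on a function type is a fibered path object, such a section is precisely a left homotopy inverse together with a right homotopy inverse to $\interp{w}$ in $\sSets/(B\times B)$; since a map equipped with both a left and a right homotopy inverse is a homotopy equivalence, this is equivalent to $\interp{w}$ being a homotopy equivalence over $B\times B$. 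As $W$ and $\intHIso(E)$ are fibrant and cofibrant in the model category $\sSets/(B\times B)$, such a map is a homotopy equivalence iff it is a weak equivalence. Combined with the previous paragraph, this shows $\synisUnivalent(E)$ is inhabited iff $E$ is simplicially univalent.

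\textbf{Main obstacle.} I expect the real work to be the bookkeeping in the comparison-with-$\delta_E$ step: tracing, through the chain of isomorphisms relating type-theoretic interpretations to their simplicial counterparts (Lemma~\ref{lemma:interpretation_correct}, the construction of the $\IdStrux$-structure, and the definitions of $\synisHIso$ and its pieces), that the interpretation of $w_{x,y}$ — which is produced by $\synId$-elimination — genuinely restricts along reflexivity to the canonical $h$-isomorphism on the identity, so that $q\cdot\interp{w}\cdot r$ is literally $\delta_E$ and not merely homotopic to it. Everything downstream of this identification is the two-out-of-three argument above together with the standard fact that, between fibrant objects of a slice of $\sSets$, carrying a left-and-then-a-right homotopy inverse is the same as being a weak equivalence.
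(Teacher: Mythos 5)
Your proof is correct and follows the same route as the paper: both arguments compare $\interp{w}$ with $\delta_E$ via the trivial cofibration $r$ (splitting the interpreted $\synId$-type as a path object) and the trivial fibration $\intHIso \to \intEq$ of Lemma~\ref{lemma:hiso-over-eq}, use the $\synId$-\comp\ rule to identify the composite along $r$ with the map classifying $(\Delta_B, 1_E)$, conclude by 2-out-of-3, and then match inhabitedness of $\synisUnivalent(E)$ with $\interp{w}$ being a weak equivalence via the homotopy-isomorphism-between-fibrant-objects characterization. The only stylistic difference is that you make the fibrant-cofibrant step fully explicit, where the paper states it more briefly.
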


\begin{proof}
 By definition, $p \colon E \to B$ is type-theoretically univalent when there exists a section of the type
$ \interp{ x_1, x_2 \oftype B \types \synisHIso (w_{x_1,x_2})\ \type } $
over $B \times B$ (where $w_{x_1,x_2}$ is as in Definition~\ref{def:type-theoretic-univalence}).  By Lemma \ref{lemma:interpretation_correct} this is equivalent to the map 
\[w_E := \interp{ x_1, x_2 \oftype B,\,\synId_B (x_1, x_2) \types w_{x_1,x_2}(p) : \synHIso(E(x_1),E(x_2)) }\] admitting the structure of a homotopy isomorphism, or equivalently being a weak equivalence.
 \[\mathclap{\xymatrix{
  \interp{ x_1, x_2 \oftype B, p \oftype \synId_B (x_1, x_2) } \ar[rr]^{w_E} \ar[rd] & & \interp{ x_1, x_2 \oftype B, f \oftype \synHIso (E(x_1), E(x_2)) } \ar[ld] \\ 
   & B \times B & }}\]

By Lemma~\ref{lemma:interpretation_correct}, we may fit $w_E$ into the following diagram.
 \[\xymatrix{B \ar[r]^-{r_B} \ar@/_3em/[dddrrr]_{\Delta_B} & \P(B) \ar[rr]^-{w_E} \ar@/_2em/[dddrr] & & \intHIso_{B\times B}(\pi_1^*E,\pi_2^*E) \ar[d] \\
& & & \intEq_{B\times B}(\pi_1^*E,\pi_2^*E) \ar[d] \\
& & & \intHom_{B \times B} (\pi_1^*E,\pi_2^*E) \ar[d] \\
& & & B \times B}\]
Then by the $\synId$-\comp\ rule applied to the definition of $w_{x_1,x_2}$, the overall composite map $B \to \intHom_{B \times B} (\pi_1^*E,\pi_2^*E) $ is the interpretation of $\interp{x \oftype B \types \lambda y\oftype E(x).\,y : [E(x),E(x)] }$, which corresponds under the universal property of $\intHom$ to $(\Delta_B,1_E)$.  So the composite map $B \to \intEq_{B\times B}(\pi_1^*E,\pi_2^*E)$ is exactly $\delta_E$ of Definition~\ref{def:simplicial-univalence}. But by definition, $E$ is univalent precisely if $\delta_E$ is a weak equivalence; and by 2-out-of-3 and Lemma~\ref{lemma:hiso-over-eq}, $\delta_E$ is a weak equivalence if and only if $w_E$ is. So we are done.
\end{proof}

\subsection{Univalence of the simplicial universes} \label{subsec:univalence-of-uu}

\begin{theorem} \label{thm:univalence}
 The fibration ${p_\alpha} \colon \UUt \to \UU$ is univalent.
\end{theorem}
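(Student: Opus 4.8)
The plan is to unwind Definition~\ref{def:simplicial-univalence}, by which univalence of $p_\alpha \colon \UUt \to \UU$ means precisely that the diagonal $\delta := \delta_{\UUt} \colon \UU \to \intEq(\UUt)$ is a weak equivalence, where $\intEq(\UUt) = \intEq_{\UU\times\UU}(\pi_1^*\UUt,\pi_2^*\UUt)$. Since $\UU$ is a Kan complex (Theorem~\ref{U:KanCpx}), so is $\UU\times\UU$, so by Corollary~\ref{cor:eq_is_fib} the map $(s,t)\colon\intEq(\UUt)\to\UU\times\UU$ is a fibration; hence $s$ and $t$ are fibrations (composing with the projections, which are fibrations as $\UU$ is Kan) and $\intEq(\UUt)$ is fibrant. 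As $t\cdot\delta = 1_{\UU}$, the map $\delta$ is a weak equivalence if and only if $t$ is, equivalently (as $t$ is a fibration) if and only if $t$ is a trivial fibration, equivalently --- a Kan fibration being a weak equivalence exactly when all of its fibres are contractible --- if and only if every fibre $t^{-1}(\name{X})$ over a vertex $\name{X}\colon\pt\to\UU$ is a contractible Kan complex. By the representability of $\UU$ and $\intEq$ (Corollaries~\ref{cor:U_classifies} and~\ref{cor:weqs_representable}), this fibre is the \emph{moduli space of $\alpha$-small Kan complexes with a weak equivalence to $X$}: an $n$-simplex of it is an $\alpha$-small well-ordered fibration $Y\to\Delta[n]$ together with a weak equivalence $Y\to X\times\Delta[n]$ over $\Delta[n]$. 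So I am reduced to showing this space is contractible, for every $\alpha$-small Kan complex $X$.

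Being a Kan complex, $t^{-1}(\name{X})$ is contractible if and only if $t^{-1}(\name{X})\to\pt$ is a trivial fibration, i.e.\ every $\partial\Delta[n]\to t^{-1}(\name{X})$ extends over $\Delta[n]$. Spelled out, this is the following extension problem: given an $\alpha$-small well-ordered fibration $Y'\to\partial\Delta[n]$ and a weak equivalence $w'\colon Y'\to X\times\partial\Delta[n]$ over $\partial\Delta[n]$, produce an $\alpha$-small well-ordered fibration $Y\to\Delta[n]$ with $Y|_{\partial\Delta[n]} = Y'$ (well-orderings included) together with a weak equivalence $w\colon Y\to X\times\Delta[n]$ over $\Delta[n]$ restricting to $w'$; the case $n=0$ is just non-emptiness, witnessed by $(X,1_X)$. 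This has exactly the shape of the extension problem solved in the proof that $\UU$ is Kan (Theorem~\ref{U:KanCpx}), with the new feature that the equivalence $w$ must be carried along.

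To solve it I would imitate the $\UU$-is-Kan argument. Fix a minimal model of $X$, i.e.\ a trivial fibration $e_X\colon X\to X_0$ with $X_0$ minimal (Quillen's Lemma,~\ref{Quillen_lemma}), and factor $Y'\to\partial\Delta[n]$ as $Y'\to Y'_0\to\partial\Delta[n]$ with the first map a trivial fibration and $Y'_0\to\partial\Delta[n]$ minimal, both still $\alpha$-small. Composing a section of $Y'\to Y'_0$ with $w'$ and with $e_X\times 1$ exhibits $Y'_0$ and $X_0\times\partial\Delta[n]$ as fibredly homotopy equivalent minimal fibrations over $\partial\Delta[n]$, hence isomorphic over $\partial\Delta[n]$; thus $Y'_0$ is the restriction of $X_0\times\Delta[n]\to\Delta[n]$ along $\partial\Delta[n]\hookrightarrow\Delta[n]$. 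Joyal's Lemma~\ref{lemma:joyal-lemma}, applied to the trivial fibration $Y'\to Y'_0$ and the cofibration $Y'_0\cong X_0\times\partial\Delta[n]\hookrightarrow X_0\times\Delta[n]$, then yields an $\alpha$-small trivial fibration $Y\to X_0\times\Delta[n]$ with $Y|_{X_0\times\partial\Delta[n]} = Y'$; so $Y\to X_0\times\Delta[n]\to\Delta[n]$ is an $\alpha$-small fibration (using regularity of $\alpha$) restricting to $Y'$ over $\partial\Delta[n]$, whose well-ordering extends that of $Y'$ by the axiom of choice. Finally $Y\to X_0\times\Delta[n]$ and $e_X\times 1\colon X\times\Delta[n]\to X_0\times\Delta[n]$ are weak equivalences of fibrations over $\Delta[n]$, so composing the former with a fibred homotopy inverse of the latter produces a weak equivalence $w\colon Y\to X\times\Delta[n]$.

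The hard part will be the last point: the $w$ so produced restricts over $\partial\Delta[n]$ only to a map \emph{fibredly homotopic to} $w'$, not equal to it, because the auxiliary section and homotopy inverse used are genuine homotopy inverses rather than strict ones. I would fix this by a correction inside the Kan complex $t^{-1}(\name{X})$: the two simplices $(Y',w')$ and $(Y',w|_{\partial\Delta[n]})$ over $\partial\Delta[n]$ are joined by a homotopy in $t^{-1}(\name{X})$ lying over the constant fibration $Y'$ (a homotopy through weak equivalences, which exists since $w|_{\partial\Delta[n]}\simeq w'$ and weak equivalences are homotopy-invariant); gluing this homotopy to the simplex $(Y,w)$ and extending along the anodyne inclusion $\partial\Delta[n]\times\Delta[1]\cup\Delta[n]\times\{0\}\hookrightarrow\Delta[n]\times\Delta[1]$ (possible since $t^{-1}(\name{X})$ is Kan) yields, at the other end, a simplex of $t^{-1}(\name{X})$ over $\Delta[n]$ restricting to $(Y',w')$ as required. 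An alternative would be to run the whole construction relative to $\partial\Delta[n]$ from the outset, making each auxiliary choice by extension along a cofibration; the facts needed would then be that left and right homotopy inverses of equivalences between fibrant objects extend along cofibrations (Lemmas~\ref{lemma:lhinv-over-eq} and~\ref{lemma:rhinv-over-eq}), together with Proposition~\ref{prop:eq-respects-weq}. Either way, this boundary bookkeeping is the only real content beyond transcribing the $\UU$-is-Kan argument, and is where I expect the effort to go.
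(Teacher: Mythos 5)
Your proposal is correct, but takes a genuinely different route from the paper. After the common first step (reduce to showing the target map $t \colon \intEq(\UUt) \to \UU$ is a trivial fibration), the paper solves the lifting problem against an arbitrary cofibration $i\colon A \hookrightarrow B$ directly and exactly: it defines $\Ebar_1$ as the pullback of $\toposPi_i w \colon \toposPi_i E_1 \to \toposPi_i E_2$ along the unit $\eta \colon \Ebar_2 \to \toposPi_i E_2$, and then checks that $i^*\Ebar_1 \iso E_1$, that $\Ebar_1$ is $\alpha$-small, and (by factoring $w$ and handling the trivial-cofibration and trivial-fibration cases separately) that $\Ebar_1$ is a fibration and $\overline{w}$ a weak equivalence. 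No minimal fibrations appear, and the filler restricts to the given boundary data on the nose. Your route instead reduces to contractibility of each fibre $t^{-1}(\name{X})$ (using, implicitly, the standard fact --- not proved in the paper --- that a Kan fibration with contractible fibres is a trivial fibration), which makes the target fibration constant and thereby enables a minimal-fibration argument modelled on the proof that $\UU$ is Kan. Note that this requires a stronger uniqueness statement than the paper's Lemma~\ref{lemma:may-lemma}: since $\partial\Delta[n]$ is a sphere, not contractible, you need the full theorem that fibredly homotopy-equivalent minimal fibrations over a common base are isomorphic (e.g.\ Goerss--Jardine~I.10.4). And, as you correctly identify, the resulting filler only agrees with the prescribed boundary data up to fibred homotopy, so the homotopy-extension correction is genuinely necessary; your fix works, since both boundary simplices lie over the same well-ordered fibration $Y' \times \Delta[1]$, the connecting homotopy is a weak equivalence by Lemma~\ref{lemma:connected_weq}, and the anodyne inclusion $\Delta[n]\times\{0\} \cup \partial\Delta[n]\times\Delta[1] \hookrightarrow \Delta[n]\times\Delta[1]$ into the Kan complex $t^{-1}(\name{X})$ produces the corrected filler. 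The trade-off is roughly this: the paper's $\toposPi_i$ construction is shorter, more uniform (no reduction to fibres), and stays entirely within the lemmas it has already proved; yours is more concrete --- it identifies the fibre as a moduli space of $\alpha$-small Kan complexes equipped with an equivalence to $X$ and leans on the geometric intuition of minimal models --- at the cost of importing two extra standard facts and a final patching step.
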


\begin{proof}
We will show that the target map $t \colon \intEq(\UUt) \to \UU$ is a trivial fibration.  Since $t$ is a retraction of $\delta_{\UUt}$, this implies by 2-out-of-3 that $\delta_{\UUt}$ is a weak equivalence.

So, we need to fill a square
 \[\xymatrix{
  A \ar[r] \ar@{^{(}->}[d]_i & \intEq(\UUt) \ar[d]^{t} \\
  B \ar[r] \ar@{.>}[ur]     & \UU
 }\]
where $i \colon A\ \to/^{(}->/ B$ is a cofibration.

By the universal properties of $\UU$ and $\intEq(\UUt)$, these data correspond to a weak equivalence $w \colon E_1 \to E_2$ between $\alpha$-small well-ordered fibrations over $A$, and an extension $\Ebar_2$ of $E_2$ to an $\alpha$-small, well-ordered fibration over $B$; and a filler corresponds to an extension $\Ebar_1$ of $E_1$, together with a weak equivalence $\overline{w}$ extending $w$:
\[\begin{tikzpicture}[x={(1.5cm,0cm)},y={(0cm,1.8cm)},z={(1.8cm,-0.7cm)}]
  % part over A:
  \node (A) at (0,0,0) {$A$};
  \node (E1) at (0,1,-0.5) {$E_1$};
  \node (E2) at (0,1,0.5) {$E_2$};
  \draw[->] (E1) to (A);
  \draw[->] (E2) to (A);
  \draw[->,auto] (E1) to node {$\scriptstyle w$} (E2);
  % part over B:
  \node (B) at (2,0,0) {$B$};
  \node (Eb1) at (2,1,-0.5) {$\Ebar_1$};
  \node (Eb2) at (2,1,0.5) {$\Ebar_2$};
  \draw[->,dashed] (Eb1) to (B);
  \draw[->] (Eb2) to (B);
  \draw[->,dashed,auto] (Eb1) to node {$\scriptstyle \overline{w}$} (Eb2);
  % connecting arrows:
  \draw[->] (A) to (B);
  \draw[->] (E2) to (Eb2);
  \draw (0.2,0.7,0.35) -- (0.4,0.7,0.35) -- (0.4,0.85,0.425); % pullback sign
  \draw[->,dashed] (E1) to (Eb1);
  \draw[dashed] (0.2,0.8,-0.4) -- (0.4,0.8,-0.4) -- (0.4,0.9,-0.45); % pullback sign
\end{tikzpicture}\]

As usual, it is sufficient to construct this first without well-orderings on $\Ebar_1$; these can then always be chosen so as to extend those of $E_1$. \\

Recalling Lemmas~\ref{lemma:exp_along_cofib}--\ref{lemma:joyal-lemma}, we define $\Ebar_1$ and $\overline{w}$ as the pullback
 \[\xymatrix{
  \Ebar_1 \ar[d]_{\overline{w}} \ar[r] \pb & \toposPi_i E_1 \ar[d]^{\toposPi_i w} \\
  \Ebar_2 \ar[r]_{\eta}                    & \toposPi_i E_2
 }\]
in $\sSets/B$, where $\eta$ is the unit of $i^* \adjoint \toposPi_i$ at $\Ebar_2$.  To see that this construction works, it remains to show:
\begin{enumerate}[(a)]
\item $i^*\Ebar_1 \iso E_1$ in $\sSets/A$, and under this, $i^* \overline{w}$ corrsponds to $w$;
\item $\Ebar_1$ is $\alpha$-small over $B$;
\item $\Ebar_1$ is a fibration over $B$, and $\overline{w}$ is a weak equivalence.
\end{enumerate}

For (a), pull the defining diagram of $\Ebar_1$ back to $\sSets/A$; by Lemma~\ref{lemma:exp_along_cofib} part 2, we get a pullback square
\[\xymatrix{
  i^*\Ebar_1 \ar[d]_{i^*\overline{w}} \ar[r] \pb & E_1 \ar[d]^w \\
  E_2 \ar[r]^{1_{E_2}}                               & E_2
}\]
in $\sSets/A$, giving the desired isomorphism.

For (b), Lemma~\ref{lemma:exp_along_cofib} part 3 gives that $\toposPi_i E_1$ is $\alpha$-small over $B$, so $\Ebar_1$ is a subobject of a pullback of $\alpha$-small maps.

For (c), note first that by factoring $w$, we may reduce to the cases where it is either a trivial fibration or a trivial cofibration.

In the former case, by Lemma~\ref{lemma:exp_along_cofib} part 1 $\toposPi_i w$ is also a trivial fibration, and hence so is $\overline{w}$; so $\Ebar_1$ is fibrant over $\Ebar_2$, hence over $B$.

In the latter case, $E_1$ is then a deformation retract of $E_2$ over $A$; we will show that $\Ebar_1$ is also a deformation retract of $\Ebar_2$ over $B$.  Let $H \colon E_2 \times \Delta[1] \to E_2$ be a deformation retraction of $E_2$ onto $E_1$.  We want some homotopy $\Hbar \colon \Ebar_2 \times \Delta[1] \to \Ebar_2$ extending $H$ on $E_2 \times \Delta[1]$, $1_{\Ebar_1} \times \Delta[1]$ on $\Ebar_1 \times \Delta[1]$, and $1_{\Ebar_2}$ on $\Ebar_2 \times \{0\}$.  Since these three maps agree on the intersections of their domains, this is exactly an instance of the homotopy lifting extension property, i.e.\ a square-filler
\[\xymatrix{
  (E_2 \times \Delta[1]) \cup (\Ebar_1 \times \Delta[1]) \cup (\Ebar_2 \times \{0\})
     \ar@{^{(}->}[d] \ar[rr]^<>(0.4){H \cup 1 \cup 1}   % extra close-bracket to unconfuse emacs parser: )
     & & \Ebar_2 \ar[d] \\
  \Ebar_2 \times \Delta[1] \ar[rr] \ar@{.>}[urr]|{\Hbar} & & B
} \qquad \quad \]
which exists since the left-hand map is a trivial cofibration.

For $\Hbar$ to be a deformation retraction, we need to see that $\Hbar_{\{1\}} \colon \Ebar_2 \to \Ebar_2$ factors through $\Ebar_1$.  By the definition of $\Ebar_1$, a map $f \colon X \to \Ebar_2$ over $b \colon X \to B$ factors through $\Ebar_1$ just if the pullback $i^*f \colon i^*X \to E_2$ factors through $E_1$.  In the case of  $\Hbar_{\{1\}}$, the pullback is by construction $i^*(\Hbar_{\{1\}}) = (i^*\Hbar)_{\{1\}} = H_{\{1\}} \colon E_2 \to E_2$, which factors through $E_1$ since $H$ was a deformation retraction onto $E_1$.

So $\overline{w}$ embeds $\Ebar_1$ as a deformation retract of $\Ebar_2$ over $B$; thus $\Ebar_1$ is a fibration over $B$ and $\overline{w}$ a weak equivalence, as desired.
\end{proof}

Putting this together with Corollary~\ref{cor:simplicial-model}, we obtain our main theorem:
 
\begin{theorem} \label{thm:simplicial-model-univalent}
Let $\beta < \alpha$ be inaccessible cardinals.  Then there is a model of dependent type theory in $\sSets_{\UU}$ with all the logical constructors of Section~\ref{subsec:logical-rules}, and a universe (given by $\UU[\beta]$) closed under these constructors and satisfying the Univalence Axiom. \qed
\end{theorem}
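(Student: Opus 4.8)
The plan is to treat this as essentially a corollary, assembling results already in hand. By Corollary~\ref{cor:simplicial-model}, $\sSets_{\UU}$ already carries a model of the full type theory with the logical constructors of Section~\ref{subsec:logical-rules} and an internal universe given by the pair $(u_\beta,i)$ of Theorem~\ref{thm:the-model-in-ssets}, whose underlying simplicial set is $\UU[\beta]$. So the only thing left to verify is that this internal universe satisfies the Univalence Axiom (Axiom~\ref{axiom:univalence}); by definition this means checking that the canonical family $\el$ of types over the internal universe (interpreted by $\UU[\beta]$) is type-theoretically univalent, i.e.\ that $\synisUnivalent(x\oftype\synU.\,\el(x))$ is inhabited in the model.

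First I would unwind the interpretation of that family. Tracing through the internal-universe structure from the proof of Theorem~\ref{thm:the-model-in-ssets}: the context $[x\oftype\synU]$ is interpreted by $\UU[\beta]$ together with the name $u_\beta\colon 1\to\UU$, and the generic family $\el$ over it is interpreted by the pullback of $p_\alpha\colon\UUt\to\UU$ along $i\colon\UU[\beta]\to\UU$, which is precisely the weakly universal $\beta$-small fibration $p_\beta\colon\widetilde{\mathrm{U}}_\beta\to\UU[\beta]$. Now apply Theorem~\ref{thm:univalence} with the cardinal $\beta$ in place of $\alpha$ (legitimate, since that proof uses only regularity of the cardinal, and $\beta$ is inaccessible hence regular) to conclude that $p_\beta$ is simplicially univalent in the sense of Definition~\ref{def:simplicial-univalence}. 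Since $\UU[\beta]$ is moreover a Kan complex by Theorem~\ref{U:KanCpx}, Theorem~\ref{thm:tt_vs_simpl_univalence} — applied with $B=\UU[\beta]$, $E=\widetilde{\mathrm{U}}_\beta$, and the names $\name{B}=u_\beta$, $\name{E}=i$ just identified — converts this into the statement that $\synisUnivalent(\el)$ is inhabited in the model. That is exactly the Univalence Axiom for the internal universe $\UU[\beta]$, and combined with Corollary~\ref{cor:simplicial-model} it yields the theorem.

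The only genuine work is bookkeeping, and the step I expect to need the most care is the identification in the second paragraph: confirming that the interpretation, in $\sSets_{\UU}$, of the generic family over the internal universe $\UU[\beta]$ really is (canonically) $p_\beta$, with the comparison map $w_E$ of Theorem~\ref{thm:tt_vs_simpl_univalence} matching the two descriptions. A secondary point is to make sure the two black boxes are invoked at the right generality — Theorem~\ref{thm:univalence} for the smaller cardinal $\beta$, and Theorem~\ref{thm:tt_vs_simpl_univalence} for a fibration whose names lie in the ambient universe $\UU=\UU[\alpha]$ rather than in $\UU[\beta]$ — but both were stated with exactly that flexibility, so no extra argument is required.
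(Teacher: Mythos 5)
Your proposal is correct and matches the paper's (implicit) argument: the paper states the theorem with a bare \qed after the sentence ``Putting this together with Corollary~\ref{cor:simplicial-model}, we obtain our main theorem,'' relying on precisely the chain you spell out — Theorem~\ref{thm:univalence} applied with $\beta$ in place of $\alpha$ to get simplicial univalence of $p_\beta \colon \UUt[\beta] \to \UU[\beta]$, followed by Theorem~\ref{thm:tt_vs_simpl_univalence} (with $B = \UU[\beta]$, $\name{B} = u_\beta$, $\name{E} = i$) to translate that into the type-theoretic Univalence Axiom for the internal universe.  The identifications in your second paragraph, including that the interpretation of $\el$ over $\UU[\beta]$ is $i^*p_\alpha \cong p_\beta$, are exactly the bookkeeping the paper elides.
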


From this, we can immediately deduce:

\begin{theorem} \label{thm:uf-consistent}
  Assuming the existence of two inaccessible cardinals, the contextual category presentation of $\mathsf{MLTT}+\mathsf{UA}$ (as given in Definition~\ref{def:uf-and-models}) is consistent. \qed
\end{theorem}

In practice one often considers a type theory with a sequence of $n$ or $\omega$ univalent universes. We expect that the techniques used in the proof of Theorem \ref{thm:uf-consistent} can be adapted to yield a consistency proof for such a theory, relative to set theory with suitable many inaccessible cardinals; but we do not pursue that here.

\begin{remark}
One can prove, within the type theory, that the Univalence Axiom together with the $\synPi$-$\eta$ rule implies functional extensionality; see \cite{voevodsky:univalent-foundations-coq}, \cite[Sec.~4.9]{hott:book}.  So we could have omitted functional extensionality from Proposition~\ref{prop:eta-and-funext}, and instead deduced it here as a corollary of univalence.
\end{remark} 

\subsection{Univalence and pullback representations} \label{subsec:pullback-reps}

We are now ready to give a uniqueness statement for the representation of an $\alpha$-small fibration as a pullback of ${p_\alpha} \colon \UUt \to \UU$: we define the space of such representations, and show that it is contractible. 

In fact, we work a bit more generally.  Given fibrations $q$, $p$, we define a space $\PP_{q,p}$ of representations of $q$ as a pullback of $p$; and we show that a fibration $p$ over a Kan base is univalent exactly when for every $q$, $\PP_{q,p}$ is either empty or contractible.

Let $p \colon E \to B$ and $q \colon Y \to X$ be fibrations.  We define a functor
\[ \P_{q,p} \colon \sSets^{\op} \to \Sets, \]
setting $\P_{q,p} (S)$ to be the set of pairs of a map $f \colon S \times X \to B$, and a weak equivalence $w \colon S \times Y \to f^*E$ over $S \times X$; equivalently, the set of squares
\[\xymatrix{
 S \times Y \ar[r]^{f'} \ar[d]_{S \times q} & E \ar[d]^{p} \\
 S \times X \ar[r]_{f}                     & B
}\]
such that the induced map $S \times Y \to f^*E$ is a weak equivalence.  Lemma~\ref{lemma:weqs_pull_back} ensures that this is functorial in $S$, by pullback.

\begin{lemma}
The functor $\P_{q,p}$ is representable, represented by the object 
\[ \PP_{q,p} := \toposPi_{X \shortto 1} \toposSigma_{\pi_1} \intEq_{X \times B} (\pi_1^* Y, \pi_2^* E).\]
\[ \begin{tikzpicture}[x=2cm,y=1.2cm]
\node (1) at (0,0) {$1$};
\node (X) at (0,1) {$X$};
\node (B) at (1,1) {$B$};
\node (Y) at (-0.5,2) {$Y$};
\node (E) at (1.5,2) {$E$};
\node (XxB) at (0.5,2) {$X \times B$};
\node (Y') at (0,3) {$\pi_1^* Y$};
\node (E') at (1,3) {$\pi_2^* E$};
\draw[->] (X) to (1);
\draw[->,auto,swap] (Y) to node {$\scriptstyle q$} (X);
\draw[->,auto,swap] (XxB) to node {$\scriptstyle \pi_1$} (X);
\draw[->,auto] (XxB) to node {$\scriptstyle \pi_2$} (B);
\draw[->,auto] (E) to node {$\scriptstyle p$} (B);
\draw[->] (Y') to (Y);
\draw[->] (Y') to (XxB);
\draw[->] (E') to (XxB);
\draw[->] (E') to (E);
\end{tikzpicture} \]
\end{lemma}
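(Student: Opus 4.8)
The plan is to prove representability by a direct chain of adjunction isomorphisms, peeling off the three layers $\toposPi_{X\shortto 1}$, $\toposSigma_{\pi_1}$, and $\intEq_{X\times B}(-,-)$ of the definition of $\PP_{q,p}$ one at a time, and then checking that the resulting bijection is natural in the test object (which is automatic, since each layer is).

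First I would fix a simplicial set $S$ and compute $\Hom_{\sSets}(S,\PP_{q,p})$. Note first that $\pi_1^*Y$ and $\pi_2^*E$ are fibrations over $X\times B$, so $\intEq_{X\times B}(\pi_1^*Y,\pi_2^*E)$ is defined; write $W$ for it. By the adjunction $(X\shortto 1)^*\adjoint\toposPi_{X\shortto 1}$, a map $S\to\toposPi_{X\shortto 1}\toposSigma_{\pi_1}W$ is the same as a map $S\times X\to\toposSigma_{\pi_1}W$ in $\sSets/X$, where $S\times X$ carries its second projection to $X$. Since $\toposSigma_{\pi_1}$ is just postcomposition with $\pi_1\colon X\times B\to X$, such a map is exactly a map $g\colon S\times X\to W$ lying over some map of the form $(\pi_2,f)\colon S\times X\to X\times B$ with $f\colon S\times X\to B$ — the first component of the structure map being forced to be $\pi_2$ by the requirement that $g$ be a morphism over $X$.

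Next I would apply Corollary~\ref{cor:weqs_representable} (representability of $\intEq$): a map $g\colon S\times X\to W$ over $(\pi_2,f)$ is precisely a weak equivalence $w\colon (\pi_2,f)^*\pi_1^*Y\to(\pi_2,f)^*\pi_2^*E$ over $S\times X$. Using the two-pullbacks lemma together with $\pi_1\cdot(\pi_2,f)=\pi_2$ and $\pi_2\cdot(\pi_2,f)=f$, this identifies the source with $S\times Y$ and the target with $f^*E$; so $g$ amounts to a pair $(f,w)$ with $f\colon S\times X\to B$ and $w\colon S\times Y\to f^*E$ a weak equivalence over $S\times X$, i.e.\ exactly an element of $\P_{q,p}(S)$.

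Finally I would observe that every step used — the $\toposPi_{X\shortto 1}$-adjunction, the description of $\toposSigma_{\pi_1}$, Corollary~\ref{cor:weqs_representable}, and the pullback identifications — is natural in $S$ with respect to the pullback action, which is precisely the functoriality of $\P_{q,p}$ recorded via Lemma~\ref{lemma:weqs_pull_back}. Hence the bijections assemble into a natural isomorphism $\Hom_{\sSets}(-,\PP_{q,p})\iso\P_{q,p}$, as required. The whole argument is routine bookkeeping; the only place demanding mild care is the identification of the iterated pullbacks $(\pi_2,f)^*\pi_1^*Y\iso S\times Y$ and $(\pi_2,f)^*\pi_2^*E\iso f^*E$ over $S\times X$, but this is a standard application of the two-pullbacks lemma, so I expect no real obstacle.
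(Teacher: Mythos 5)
Your proof is correct and follows essentially the same route as the paper's: a chain of adjunction isomorphisms peeling off $\toposPi_{X\shortto 1}$, $\toposSigma_{\pi_1}$, and the universal property of $\intEq$ (Corollary~\ref{cor:weqs_representable}) in turn, with naturality inherited from each layer. The only difference is that you spell out the iterated-pullback identifications slightly more explicitly than the paper does.
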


\begin{proof}
For any $S$, we have:
\begin{equation*}
\begin{split}
  \Hom &(S,\toposPi_{X \shortto 1} \toposSigma_{\pi_1} \intEq_{X \times B} (\pi_1^* Y, \pi_2^* E)) \\
    & \iso \Hom_X (X \times S, \toposSigma_{\pi_1} \intEq_{X \times B} (\pi_1^* Y, \pi_2^* E)) \\
    & \iso \{ (\hat{f},\hat{w})\ |\ \hat{f} \colon X \times S \to X \times B\ \text{over}\ X, \\
    & \qquad \qquad \quad \  \hat{w} \colon X \times S \to \intEq_{X \times B} (\pi_1^* Y, \pi_2^* E)\ \text{over}\ X \times B \} \\
    & \iso \{ (f,w)\ |\ f \colon X \times S \to B,\ w \colon Y \times S \to f^* E\ \text{w.e.\ over}\ X \times S \} \\
    & \iso \P_{q,p}(S) \qedhere
\end{split}
\end{equation*}
\end{proof}

\begin{remark}
By Yoneda, we see from this that $(\PP_{q,p})_n \iso \P_{q,p}(\Delta[n])$.
\end{remark}

\begin{theorem} \label{thm:univalent-characterization}
Let $p \colon E \to B$ be a fibration, with $B$ Kan.  Then $p$ is univalent if and only if for every fibration $q \colon Y \to X$, $\PP_{q,p}$ is either empty or contractible.\footnote{Constructively-minded readers might prefer to phrase this as: if $\PP_{q,p}$ is inhabited, then it is contractible.  In the language of \cite{hott:book}, it is a \emph{mere proposition}.}
\end{theorem}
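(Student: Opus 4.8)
The plan is to reformulate univalence of $p$ in terms of the source map $s \colon \intEq(E) \to B$, and then to reduce both implications to identifying the simplicial sets $\PP_{q,p}$ with spaces of sections of pullbacks of $s$.

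\emph{Reformulation.} Since $\delta_E$ is always a split monomorphism with $s \cdot \delta_E = 1_B$, the 2-out-of-3 property shows that $p$ is univalent (i.e.\ $\delta_E$ is a weak equivalence) if and only if $s$ is a weak equivalence; and since $(s,t) \colon \intEq(E) \to B \times B$ is a fibration by Corollary~\ref{cor:eq_is_fib}, when $B$ is a Kan complex this is in turn equivalent to $s$ being a trivial fibration, i.e.\ to every fibre $s^{-1}(b_0)$ over a vertex $b_0 \in B$ being contractible. (I will carry out the argument for fibrant $B$; the general case is handled similarly, working with the weak equivalence $s$ directly.)

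\emph{Two computations with $\PP_{q,p}$.} First, fix a vertex $b_0 \in B$ and let $F := E_{b_0}$ be its fibre, regarded as a fibration $F \to 1$. Unwinding the definition of $\PP_{q,p}$ in the case $X = 1$, $Y = F$ (where $\toposPi_{1 \shortto 1}$ is the identity and $\toposSigma_{\pi_1}$ merely forgets the base) gives $\PP_{F,p} \iso \intEq_B(B \times F,\, E)$; on the other hand, pulling $\intEq(E)$ back along $b_0 \times 1_B \colon B \to B \times B$ identifies $s^{-1}(b_0) \iso \intEq_B((\mathrm{const}_{b_0})^* E,\, E) = \intEq_B(B \times F,\, E)$. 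So there is a natural isomorphism $\PP_{F,p} \iso s^{-1}(b_0)$, carrying the canonical point $(\mathrm{const}_{b_0}, 1_F) \in \PP_{F,p}$ to the value of $\delta_E$ at $b_0$. Second, for an arbitrary fibration $q \colon Y \to X$ with $\PP_{q,p}$ inhabited, choose $(f_0, w_0) \in \PP_{q,p}$, so $w_0 \colon Y \to f_0^* E$ is a weak equivalence over $X$. Pulling $w_0$ back along $\pi_1 \colon X \times B \to X$ and applying Proposition~\ref{prop:eq-respects-weq} yields a weak equivalence between $\intEq_{X \times B}(\pi_1^* Y,\, \pi_2^* E)$ and $\intEq_{X \times B}(\pi_1^* f_0^* E,\, \pi_2^* E)$ over $X \times B$; the functor $\toposSigma_{\pi_1}$ preserves it, and $\toposPi_{X \shortto 1}$ --- which is right Quillen, since $X \times (-) \colon \sSets \to \sSets/X$ is left Quillen --- preserves it between the resulting fibrations over $X$. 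Hence $\PP_{q,p} \homot \PP_{f_0^* p,\, p}$, and tracing the nested universal properties through $\toposSigma_{\pi_1}$ and $\toposPi_{X \shortto 1}$ identifies $\PP_{f_0^* p,\, p} \iso \toposPi_{X \shortto 1}(f_0^* s)$, the object of sections over $X$ of the pullback of $s$ along $f_0$.

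\emph{The two implications.} Suppose first that every $\PP_{q,p}$ is empty or contractible. For each vertex $b_0 \in B$, the simplicial set $\PP_{F,p} \iso s^{-1}(b_0)$ is inhabited, hence contractible; so every fibre of $s$ is contractible, $s$ is a trivial fibration, $\delta_E$ is a weak equivalence, and $p$ is univalent. Conversely, suppose $p$ is univalent, so $s$ is a trivial fibration. Let $q \colon Y \to X$ be a fibration with $\PP_{q,p} \neq \emptyset$ and pick $(f_0, w_0)$ as above. Then $\PP_{q,p} \homot \PP_{f_0^* p,\, p} \iso \toposPi_{X \shortto 1}(f_0^* s)$; since $f_0^* s$ is again a trivial fibration and $\toposPi_{X \shortto 1}$ preserves trivial fibrations, $\toposPi_{X \shortto 1}(f_0^* s) \to 1$ is a trivial fibration, so $\PP_{f_0^* p,\, p}$, and hence $\PP_{q,p}$, is contractible.

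The step I expect to be the main obstacle is the reduction $\PP_{q,p} \homot \PP_{f_0^* p,\, p}$ together with the identification $\PP_{f_0^* p,\, p} \iso \toposPi_{X \shortto 1}(f_0^* s)$: both demand careful bookkeeping of the layered universal properties (of $\intEq$, of $\toposSigma_{\pi_1}$, and of the sections functor $\toposPi_{X \shortto 1}$), as well as verifying enough fibrancy for Proposition~\ref{prop:eq-respects-weq} and the right-Quillen property of $\toposPi_{X \shortto 1}$ to apply. By contrast, the reformulation step and the fibrewise identification $\PP_{F,p} \iso s^{-1}(b_0)$ should be routine.
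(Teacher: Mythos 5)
Your proof is correct and takes essentially the same route as the paper's: both reformulate univalence as the source map $s \colon \intEq(E) \to B$ being a trivial fibration, identify $\PP_{E_{b_0},\,p}$ with the fiber $s^{-1}(b_0)$ for one direction, and for the other use the homotopy-invariance of $\intEq$ in its arguments together with the identification of $\intEq_{X\times B}(\pi_1^*f_0^*E,\pi_2^*E) \to X$ as a pullback of $s$. The only cosmetic difference is that the paper proves directly that $\PP_{q,p} \to 1$ is a trivial fibration by transporting the problem down to that pullback of $s$, whereas you first establish the weak equivalence $\PP_{q,p} \simeq \PP_{f_0^*p,\,p}$ as a separate step before concluding.
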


\begin{proof}
First, suppose that $p$ is univalent.  Take any $q$ such that $\PP_{q,p}$ is non-empty; then we have some map $1 \to \PP_{q,p}$, corresponding to a square
\[ \begin{tikzpicture}[x=2cm,y=2cm]
\node (X) at (0,0) {$X$};
\node (B) at (1,0) {$B$};
\node (Y) at (-0.4,1) {$Y$};
\node (f*E) at (0.3,1) {$f^*E$};
\node (E) at (1.3,1) {$E$};
\draw[->,auto] (X) to node {$\scriptstyle f$} (B);
\draw[->,auto,swap] (Y) to node {$\scriptstyle q$} (X);
\draw[->,auto] (f*E) to node {$\scriptstyle f^*p$} (X);
\draw[->,auto] (E) to node {$\scriptstyle p$} (B);
\draw[->,auto] (Y) to node {$\scriptstyle w$} (f*E);
\draw[->] (f*E) to (E);
\end{tikzpicture} \]

We claim that $\PP_{q,p} \to 1$ is a trivial fibration, and hence $\PP_{q,p}$ is contractible.  $\toposPi$-functors preserve trivial fibrations (since their left adjoints, pullback, preserve cofibrations), so it is enough to show that 
\[\intEq_{X \times B} (\pi_1^* Y, \pi_2^* E) \to X \times B \to^{\pi_1} X\]
is a trivial fibration.

For this, first note that $w$, as a weak equivalence between fibrations, is a homotopy equivalence over $X$, so induces a homotopy equivalence 
\[(w\cdot -) \colon \intEq_{X \times B} ((\pi_1^* (f^* E), \pi_2^* E) \to \intEq_{X \times B} (\pi_1^* Y, \pi_2^* E).\]
 So it is enough to show that $\intEq_{X \times B} ((\pi_1^* (f^* E), \pi_2^* E) \to X \times B \to^{\pi_1} X$ is a trivial fibration; but this follows since it is the pullback along $f$ of the “source” map $\intEq(E) = \intEq_{B \times B}(\pi_1^*E,\pi_2^*E) \to B \times B \to^{\pi_1} B$, which is a trivial fibration since $p$ is univalent and $B$ is Kan.

Conversely, suppose that for every fibration $q$, $\PP_{q,p}$ is either empty or contractible; now, we wish to show $p$ univalent.  For this, it is enough to show that the source map $s \colon \intEq(E) \to B$ is a trivial fibration, which will hold if each of its fibers is contractible.

So, take some $f \colon 1 \to B$, and consider the fiber $f^*\intEq(E)$.  By the universal property of $\intEq(E)$, this is isomorphic to $\PP_{f^*p,p}$; and it is certainly non-empty, containing the pair $(f, 1_{f^*E})$; so by assumption, it is contractible, as desired.
\[ \begin{gathered}[b]
  \xymatrix{
    f^* \intEq(E) \ar[r] \ar[d] \pb & \intEq(E) \ar[d]^s \\
    1 \ar[r]^f & B
  } \\[-\dp\strutbox]
\end{gathered} \qedhere
\]
\end{proof}

\begin{corollary}
For any $\alpha$-small fibration $q$, the simplicial set $\PP_{q, p_\alpha}$ of representations of $q$ as a pullback of $p_\alpha$ is contractible. \qed
\end{corollary}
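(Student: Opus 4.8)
The plan is to read this off immediately from the preceding theorem (that $p$ is univalent if and only if $\PP_{q,p}$ is empty or contractible for every fibration $q$), together with the univalence of $p_\alpha$ already established in Theorem~\ref{thm:univalence}. Concretely, I would first invoke that theorem with $p = p_\alpha$ and the given $\alpha$-small fibration $q$; since $p_\alpha$ is univalent, this tells us that $\PP_{q,p_\alpha}$ is either empty or contractible. So the entire content of the corollary is to rule out the empty case.

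For that, I would appeal to the weak universality of $p_\alpha$ from Corollary~\ref{cor:U_classifies}: since $q \colon Y \to X$ is $\alpha$-small, it can be exhibited as a pullback of $p_\alpha$, i.e.\ there is a map $f \colon X \to \UU$ together with an isomorphism $Y \iso f^*\UUt$ over $X$. Unwinding the definition of $\PP_{q,p_\alpha}$ at $S = 1$ (a pair consisting of a map $X \to \UU$ and a weak equivalence $Y \to f^*\UUt$ over $X$), and using that an isomorphism is in particular a weak equivalence, this data constitutes a point $1 \to \PP_{q,p_\alpha}$. Hence $\PP_{q,p_\alpha}$ is non-empty, and therefore contractible.

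There is essentially no obstacle here — the corollary is a direct specialisation — and the only point requiring even a moment's thought is the harmless observation that a strict pullback representation yields a point of the (homotopical) representation space, since the comparison isomorphism it provides is a weak equivalence.
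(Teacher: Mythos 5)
Your proof is correct and matches the paper's (unwritten) argument exactly: the \texttt{\textbackslash qedhere} on the corollary indicates it is meant to follow immediately from the preceding theorem combined with Theorem~\ref{thm:univalence}, with non-emptiness supplied by the weak universality of $p_\alpha$ (Corollary~\ref{cor:U_classifies}). Nothing is missing.
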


%%% Local Variables:
%%% mode: latex
%%% TeX-master: "simplicial-model"
%%% End: 

\appendix

\addtocontents{toc}{\protect\setcounter{tocdepth}{1}}

% Note: standard ordering of rules we currently follow throughout is:
% Pi; Sigma; Id; W; 0; 1; +; universes.

\section{Syntax of Martin-L\"of Type Theory} \label{app:type-theory}

A full introduction to Martin-Löf type theory is beyond the scope of this paper.
For the reader new to type theory, we recommend \cite{martin-lof:bibliopolis} or \cite{n-p-s:programming} as a general introduction, and \cite{hofmann:syntax-and-semantics} for a more detailed presentation of the syntax.

However, there are many variant presentations of the theory; in this appendix, we lay out the one we have in mind for the present paper, and which is intended to correspond to the contextual categories described in Appendix~\ref{app:cxl-structure}.

We consider the syntax as constructed in two stages: first the \emph{raw} or \emph{untyped} syntax of the theory---the set of expressions that are at least parseable, but not necessarily meaningful---and then the \emph{derivable judgements}, certain inductively-generated predicates picking out the genuinely meaningful contexts, types, and terms.

The raw syntax may be constructed as certain strings of symbols, or alternatively, certain labelled trees. On this, one then defines \emph{$alpha$-equivalence} (i.e.\ syntactic identity modulo renaming of bound variables), and the operation of \emph{(capture-free) substitution}.  This step is well-standardised in the literature.

At the second stage, one defines on the raw syntax several multi-place relations, picking out the \emph{derivable judgements} of the theory.  For instance, “$\Gamma \types a : A$” will be a relation on triples $(\Gamma, a, A)$ of a raw context, term, and type expression respectively, to be read as “$a$ is a term of type $A$, in context $\Gamma$”.  These relations are defined by mutual induction, as the smallest family of relations closed under a bevy of specified closure conditions, the \emph{inference rules} of the theory.

Details of the judgements and inference rules used vary somewhat; we therefore set our choice out here in full.  For the structural rules, our presentation is based largely on \cite{hofmann:syntax-and-semantics}; our selection of logical rules, and in particular our treatment of the universe, follows \cite{martin-lof:bibliopolis}.

We take as basic four judgement forms:
\begin{mathpar}
  \Gamma \types A\ \type
\and
  \Gamma \types A = A'\ \type
\and
  \Gamma \types a : A
\and
  \Gamma \types a = a' : A.
\end{mathpar}
We take the context judgement as defined from these: that is, if $\Gamma$ is a list $(x_i \oftype A_i)_{i < n}$, with $x_i$ distinct variables and $A_i$ raw type expressions, then $\types \Gamma\ \cxt$ is an abbreviation for the statement that for each $i < n$, $(x_j \oftype A_j)_{j < i} \types A_i\ \type$.

\emph{Derivability}, for these four basic judgements, is then defined as the smallest family of relations closed under the closure conditions specified by the inference rules below.
These are given in “rule-notation”, so for instance
\[
  \inferrule*{\Gamma \types a : A \\ \Gamma \types A = B \ \type}{\Gamma \types a : B}
\]
expresses the closure condition “for all suitable raw expressions $\Gamma$, $a$, $A$, $B$, if the judgements $\Gamma \types a : A$ and $\Gamma \types A = B$ are derivable, then so is $\Gamma \types a : B$”.

The inference rules fall into two groups: the \emph{structural rules}, which we assume are always included, and the \emph{logical rules}, which different type theories may include different subsets of.

\subsection{Structural Rules} \label{subsec:structural-rules}
The structural rules of the type theory are (where $\mathcal{J}$ may be the conclusion of any of the judgement forms):

\begin{mathpar}
  \inferrule*[right=$\Vble$]{\types \Gamma,\ x \oftype A,\ \Delta\ \cxt}{\Gamma,\ x \oftype A,\ \Delta \types x : A}
\and
  \inferrule*[right=$\Subst$]{\Gamma \types a : A \\ \Gamma,\ x \oftype A,\ \Delta \types \mathcal{J}}{\Gamma,\ \Delta[a/x] \types \mathcal{J}[a/x]}
\and
  \inferrule*[right=$\Weak$]{\Gamma \types A\ \type \\ \Gamma,\ \Delta \types \mathcal{J}}{\Gamma,\ x \oftype A,\ \Delta \types \mathcal{J}}
\end{mathpar}

Definitional equality (also known as syntactic or judgemental equality):
\begin{mathparpagebreakable}
  \inferrule*{\Gamma \types A\ \type}{\Gamma \types A = A\ \type} 
\and
  \inferrule*{\Gamma \types A=B\ \type}{\Gamma \types B = A\ \type}
\and
  \inferrule*{\Gamma \types A=B\ \type \\ \Gamma \types B=C \ \type}{\Gamma \types A = C\ \type}
\and
  \inferrule*{\Gamma \types a : A}{\Gamma \types a = a : A}
\and
  \inferrule*{\Gamma \types a=b : A}{\Gamma \types b=a : A}
\and
  \inferrule*{\Gamma \types a=b : A \\ \Gamma \types b=c : A}{\Gamma \types a=c : A}
\and
  \inferrule*{\Gamma \types a : A \\ \Gamma \types A = B \ \type}{\Gamma \types a : B}
\and
  \inferrule*{\Gamma \types a = b : A \\ \Gamma \types A = B \ \type}{\Gamma \types a = b : B}
\end{mathparpagebreakable}

\subsection{Logical Constructors} \label{subsec:logical-rules}

In this and subsequent sections, we present rules introducing various type- and term-constructors.  For each such constructor, we assume (besides the explicitly stated rules introducing and governing it) a \emph{congruence} rule stating that it preserves definitional equality in each of its arguments; for instance, along with the $\synPi$-\intro\ rule introducing the constructor $\lambda$, we assume the rule
\[\inferrule*[right=$\lambda$-eq]{\Gamma \types A = A'\ \type \\ \Gamma,\ x \oftype A \types B(x) = B'(x)\ \type \\ \Gamma,\ x \oftype A \types b(x) = b'(x) : B(x)}{\Gamma \types \lambda x \oftype A.b(x) = \lambda x \oftype A'.b'(x) : \synPi_{x \oftype A} B(x)}\]

The rules fall naturally into groups according to the various logical constructors.  Many of the constructors considered ($\synSigma$-, $\synId$-, $\synW$-, $\synZero$-, $\synOne$-, and $+$-types) follow a common pattern, as \emph{inductive types/families} with constructors and an elimination principle; here, as in \cite{martin-lof:bibliopolis}, this pattern is purely heuristic, but in approaches such as the Calculus of Inductive Constructions \cite{werner:thesis} they are unified formally as instances of a single scheme.
 
\paragraph{$\synPi$-types} (Dependent products; dependent function types).

\begin{mathparpagebreakable}
  \inferrule*[right=$\synPi$-\form]{\Gamma,\ x \oftype A \types B(x)\ \type}{\Gamma \types \synPi_{x \oftype A} B(x)\ \type}
\and
  \inferrule*[right=$\synPi$-\intro]{\Gamma,\ x \oftype A \types B(x)\ \type \\ \Gamma,\ x \oftype A \types b(x) : B(x)}{\Gamma \types \lambda x \oftype A.b(x) : \synPi_{x \oftype A} B(x)}
\and
  \inferrule*[right=$\synPi$-\appRule]{\Gamma \types f \oftype \synPi_{x \oftype A} B(x) \\ \Gamma \types a : A}{\Gamma \types \app (f, a) : B(a)}
\and
  \inferrule*[right=$\synPi$-\comp]{\Gamma,\ x \oftype A \types B(x)\ \type \\ \Gamma,\ x \oftype A \types b(x) : B(x) \\ \Gamma \types a : A}{\Gamma \types \app(\lambda x \oftype A .b(x), a)=b(a) : B(a)}
\end{mathparpagebreakable}

As a special case of this, when $B$ does not depend on $x$, we obtain the ordinary function type $[A, B] := \synPi_{x \oftype A} B$. \\

\paragraph*{$\synSigma$-types}  (Dependent sums; type-indexed disjoint sums.)

\begin{mathparpagebreakable}
  \inferrule*[right=$\synSigma$-\form]{\Gamma \types A\ \type \\ \Gamma,\ x \oftype A \types B(x)\ \type}{\Gamma \types \synSigma_{x \oftype A} B(x)\ \type}
\and
  \inferrule*[right=$\synSigma$-\intro]{\Gamma \types A\ \type \\ \Gamma,\ x \oftype A \types B(x)\ \type}{\Gamma,\ x \oftype A,\ y \oftype B(x) \types \pair (x, y) : \synSigma_{x \oftype A} B(x)}
\and
  \inferrule*[right=$\synSigma$-\elim]{\Gamma,\ z \oftype \synSigma_{x \oftype A} B(x) \types C(z)\ \type \\ \Gamma,\ x \oftype A,\ y \oftype B(x) \types d(x,y) : C(\pair(x, y))}{\Gamma,\ z \oftype \synSigma_{x \oftype A} B(x) \types \synsplit_d (z) : C(z)}
\and
  \inferrule*[right=$\synSigma$-\comp]{\Gamma,\ z \oftype \synSigma_{x \oftype A} B(x) \types C(z)\ \type \\ \Gamma,\ x \oftype A,\ y \oftype B(x) \types d(x,y) : C(\pair(x, y))}{\Gamma,\ x \oftype A,\ y \oftype B(x) \types \synsplit_d (\pair (x,y))=d(x,y) : C(\pair (x, y))}
\end{mathparpagebreakable}

Again,  the special case where $B$ does not depend on $x$ is of particular interest: this gives the cartesian product $A \times B := \synSigma_{x \oftype A} B$. \\

\paragraph*{$\synId$-types.}  (Identity types, equality types.)

\begin{mathparpagebreakable}
  \inferrule*[right=$\synId$-\form]{\Gamma \types A\ \type}{\Gamma,\ x,y\oftype A \types \synId_A(x,y)\ \type}
\and
  \inferrule*[right=$\synId$-\intro]{\Gamma \types A\ \type}{\Gamma,\ x\oftype A \types \refl_A(x):\synId_A(x,x)}
\and
  \inferrule*[right=$\synId$-\elim]{\Gamma,\ x,y\oftype A,\ u\oftype \synId_A(x,y) \types C(x,y,u)\ \type \\ \Gamma,\ z\oftype A \types d(z):C(z,z,\refl_A(z))}{\Gamma,\ x,y\oftype A,\ u\oftype \synId_A(x,y) \types \synJ_{z. d}(x,y,u) : C(x,y,u)}
\and
  \inferrule*[right=$\synId$-\comp]{\Gamma,\ x,y\oftype A,\ u\oftype \synId_A(x,y) \types C(x,y,u)\ \type \\ \Gamma,\ z\oftype A \types d(z):C(z,z,r(z))}{\Gamma,\ x\oftype A \types \synJ_{z.d}(x,x,\refl_A(x)) = d(x) : C(x,x,\refl_A(x))}
\end{mathparpagebreakable}

\paragraph*{$\synW$-types.}  (Types of well-founded trees; free term algebras.)

\begin{mathparpagebreakable}
  \inferrule*[right=$\synW$-\form]{\Gamma,\ x \oftype A \types B(x)\ \type}{\Gamma \types \synW_{x \oftype A} B(x)\ \type}
\and
  \inferrule*[right=$\synW$-\intro]{\Gamma,\ x \oftype A \types B(x)\ \type}{\Gamma, \ x \oftype A, \ y \oftype [B(x), \synW_{u \oftype A} B(u)] \types \synsup(x, y) : \synW_{u \oftype A} B(u)}
\\
  \mathclap{\inferrule*[right=$\synW$-\elim]
    {\Gamma, \ w \oftype \synW_{x \oftype A} B(x) \types C(w) \ \type \\
     \Gamma,\ x \oftype A,\ y \oftype [B(x), \synW_{u \oftype A} B(u)],\ z \oftype \synPi_{u \oftype B(x)} C(\app(y, u)) \hspace{1.55cm} \\
     \hspace{6cm} \types d(x, y, z) : C(\synsup(x, y))}
  {\Gamma,\ w \oftype \synW_{x \oftype A} B(x) \types \wrec_{d} (w) : C(w)}}
\\
  \mathclap{\inferrule*[right=$\synW$-\comp]
  {\Gamma, \ w \oftype \synW_{x \oftype A} B(x) \types C(w) \ \type \\
   \Gamma,\ x \oftype A,\ y \oftype [B(x), \synW_{u \oftype A} B(u)],\ z \oftype \synPi_{u \oftype B(x)} C(\app(y, u)) \hspace{2cm} \\
    \hspace{6.45cm} \types d(x, y, z) : C(\synsup(x, y))}
  {\Gamma, \ x \oftype A, \ y \oftype [B(x), \synW_{u \oftype A} B(u)] \types \wrec_{d} (\synsup(x, y)) \hspace{2.8cm} \\
    \hspace{2.4cm} = d(x, y, \lambda u \oftype B(x). \wrec_d(\app(y, u))): C(\synsup(x, y))}}
\end{mathparpagebreakable}

\paragraph*{$\synZero$}  (Empty type.)

\begin{mathparpagebreakable}
  \inferrule*[right=$\synZero$-\form]{\types \Gamma\ \cxt}{\Gamma \types \synZero\ \type}
\and
  \text{(No introduction rules.)}
\\
  \inferrule*[right=$\synZero$-\elim]{\Gamma, \ x \oftype \synZero \types C(x)\ \type}{\Gamma, \ x \oftype \synZero \ \types \syncase (x) : C(x)}
\and
  \text{(No computation rules.)}
\end{mathparpagebreakable}

\paragraph*{$\synOne$}  (Unit type, singleton type.)

\begin{mathparpagebreakable}
  \inferrule*[right=$\synOne$-\form]{\types \Gamma\ \cxt }{\Gamma \types \synOne \ \type}
\and
  \inferrule*[right=$\synOne$-\intro]{\types \Gamma\ \cxt }{\Gamma \types * : \synOne}
\and
  \inferrule*[right=$\synOne$-\elim]{\Gamma,\ x \oftype \synOne \types C(x)\ \type \\ \Gamma \types d : C(*)}{\Gamma,\ x \oftype \synOne \types \synrec_d (x) : C(x)}
\and
  \inferrule*[right=$\synOne$-\comp]{\Gamma,\ x \oftype \synOne \types C(x)\ \type \\ \Gamma \types d : C(*)}{\Gamma \types \synrec_d (*) = d : C(*)}
\end{mathparpagebreakable}
% \todo{Improve naming conventions of eliminators?}

\paragraph*{$+$-types} (Binary disjoint sums.)

\begin{mathparpagebreakable}
  \inferrule*[right=$+$-\form]{\Gamma \types A\ \type \\ \Gamma \types B\ \type}{\Gamma \types A + B \ \type}
\and
  \inferrule*[right=$+$-\intro\ 1.]{\Gamma \types A\ \type \\ \Gamma \types B\ \type}{\Gamma,\ x \oftype A \types \inl(x) : A+ B}
\and
  \inferrule*[right=$+$-\intro\ 2.]{\Gamma \types A\ \type \\ \Gamma \types B\ \type}{\Gamma,\ y \oftype B \types \inr(y) : A +B}
\and
  \inferrule*[right=$+$-\elim]{\Gamma,\ z \oftype A + B \types C(z)\ \type \\ \Gamma,\ x \oftype A \types d_l(x) : C(\inl (x)) \\ \Gamma,\ y \oftype B \types d_r(y) : C(\inr (y))}{\Gamma,\ z \oftype A + B \types \syncase_{d_l,d_r}(z) : C(z)}
\and
  \inferrule*[right=$+$-\comp\ 1.]{\Gamma,\ z \oftype A + B \types C(z)\ \type \\ \Gamma,\ x \oftype A \types d_l(x) : C(\inl (x)) \\ \Gamma,\ y \oftype B \types d_r(y) : C(\inr (y))}{\Gamma,\ x \oftype A \types \syncase_{d_l, d_r}(\inl(x))=d_l(x) : C(\inl(x))}
\and
  \inferrule*[right=$+$-\comp\ 2.]{\Gamma,\ z \oftype A + B \types C(z)\ \type \\ \Gamma,\ x \oftype A \types d_l(x) : C(\inl (x)) \\ \Gamma,\ y \oftype B \types d_r(y) : C(\inr (y))}{\Gamma,\ y \oftype B \types \syncase_{d_l, d_r}(\inr(y))=d_r(y) : C(\inr(y))}
\end{mathparpagebreakable}

\subsection{Universes} \label{subsec:universe-rules}

A universe within the theory may be closed under some or all of the logical constructors of the theory; we include below the rules corresponding to closure under all of the constructors given above.

\begin{mathparpagebreakable}
  \inferrule{ }{\types \synU\ \type}
\and
  \inferrule{ }{x \oftype \synU \types \el (x) \ \type}
\\
  \inferrule{\Gamma \types a : \synU \\ \Gamma,\ x \oftype \el(a) \types b(x) : \synU}{\Gamma \types \synpi (a, x. b(x)) : \synU}
\and
  \inferrule{\Gamma \types a : \synU \\ \Gamma,\ x \oftype \el(a) \types b(x) : \synU}{\Gamma \types \el (\synpi (a, x. b(x)) = \synPi_{x : \el(a)} \el(b(x)) \ \type}
\\
  \inferrule{\Gamma \types a : \synU \\ \Gamma,\ x \oftype \el(a) \types b(x) : \synU}{ \Gamma \types \synsigma (a, x. b(x)) : \synU}
\and
  \inferrule{ \Gamma \types a : \synU \\ \Gamma,\ x \oftype \el(a) \types b(x) : \synU}{ \Gamma \types \el (\synsigma (a, x. b(x)) = \synSigma_{x : \el(a)} \el(b(x)) \ \type}
\\
  \inferrule{ \Gamma \types a : \synU \\ \Gamma \types b, c : \el (a)}{ \Gamma \types \synid_A (b,c) : \synU}
\and
  \inferrule{ \Gamma \types a : \synU \\  \Gamma \types b, c : \el (a)}{ \Gamma \types \el (\synid_a(b,c)) = \synId_{\el (a)}(b, c) \ \type}
\\
  \inferrule{ }{\types \synz : \synU}
\and
  \inferrule{ }{\types \el (\synz) = \synZero \ \type}
\and
  \inferrule{ }{\types \syno : \synU}
\and
  \inferrule{ }{\types \el (\syno) = \synOne \ \type}
\\
  \inferrule{\Gamma \types a, b : \synU}{ \Gamma \types a \smallplus b : \synU}
\and
  \inferrule{ \Gamma \types a, b : \synU}{ \Gamma \types \el (a \smallplus b) = \el (a) + \el (b) \ \type}
\\
  \inferrule{ \Gamma \types a : \synU \\  \Gamma ,\ x \oftype \el(a) \types b(x) : \synU}{ \Gamma \types \synw (a, x. b(x)) : \synU}
\and
  \inferrule{ \Gamma \types a : \synU \\  \Gamma ,\ x \oftype \el(a) \types b(x) : \synU}{ \Gamma \types \el (\synw (a, x. b(x)) = \synW_{x : \el(a)} \el(b(x)) \ \type}
\end{mathparpagebreakable}

\subsection{Further rules} \label{subsec:optional-rules}

The rules above are somewhat weak in their implications for equality of functions.  To this end, some further rules are often adopted: the \emph{$\eta$-rule} for $\synPi$-types, and the \emph{functional extensionality} rule(s).  Our formulation of the latter is taken from \cite{garner:on-the-strength}; see also \cite{hofmann:thesis}.

\begin{mathparpagebreakable}
  \inferrule*[right=$\synPi$-$\eta$]
    {\Gamma \types f : \synPi_{x \oftype A} B(x)}
    {\Gamma \types f = \lambda x \oftype A. \app(f,x) : \synPi_{x \oftype A} B(x) }
\and
  \inferrule*[right=$\synPi$-ext]
    {\Gamma \types f, g : \synPi_{x \oftype A} B(x) \\
     \Gamma \types h : \synPi_{x \oftype A} \synId_{B(x)}(\app(f,x),\app(g,x)) }
    {\Gamma \types \ext(f,g,h) : \synId_{\synPi_{x \oftype A} B(x)}(f,g) }
\and
  \inferrule*[right=$\synPi$-ext-comp-prop]
    {\Gamma, x \oftype A \types b : B(x) }
    {\Gamma \types \extcomp(x.b): \synId_{\synPi_{x \oftype A} B(x)} \hspace{4.4cm}  \\
     \hspace{2cm} (\ext(\lambda x \oftype A. b,\lambda x \oftype A. b,\lambda x \oftype A. \refl b), \refl (\lambda x \oftype A. b)) }
\end{mathparpagebreakable}

%%% Local Variables: 
%%% mode: latex
%%% TeX-master: "simplicial-model"
%%% End: 

% Note: standard ordering of rules we currently follow throughout is:
% Pi; Sigma; Id; W; 0; 1; +; universes.

\section{Logical structure on contextual categories} \label{app:cxl-structure}

We give here full translations of the various type-theoretic rules and axioms into the language of contextual categories: the logical rules of Section~\ref{subsec:logical-rules}, the universe rules of Section \ref{subsec:universe-rules}, the extensionality and $\eta$-rules of Section~\ref{subsec:optional-rules}, and Axiom~\ref{axiom:univalence}, the Univalence Axiom.

\subsection{Logical structure}

\begin{definition}
A \emph{$\synPi$-type structure} on a contextual category $\CC$ consists of:
\begin{enumerate}
  \item for each $(\Gamma, A, B) \in \ob_{n+2} \CC$, an object $(\Gamma, \synPi(A, B)) \in \ob_{n+1} \CC$;
  \item for each such $(\Gamma, A, B) $ and section $b \colon (\Gamma, A) \to (\Gamma, A, B)$, a section $\lambda(b) \colon \Gamma \to$ $(\Gamma, \synPi(A, B))$;
  \item for each such $(\Gamma, A, B) $ and pair of sections $k \colon \Gamma \to (\Gamma, \synPi (A, B))$, $a \colon \Gamma \to (\Gamma, A)$, a section $\app(k,a) \colon \Gamma \to (\Gamma, A, B)$ such that $p_B \cdot \app(k,a) = a$,
 \item such that for each $(\Gamma, A, B)$, $a \colon \Gamma \to (\Gamma, A)$ and $b \colon (\Gamma, A) \to (\Gamma, A, B)$, we have $\app(\lambda(b),a) = b \cdot a$;
 \item and for any $f \colon \Gamma' \to \Gamma$, and all appropriate arguments as above,
   \begin{gather*}
     f^*(\Gamma, \synPi(A, B)) = (\Gamma', \synPi(f^*A, f^*B)), \\
     f^*\lambda(b) = \lambda({f^*b}), \qquad f^*(\app(k,a)) = \app(f^*k,f^*a).
   \end{gather*}
 \end{enumerate}
\end{definition}

Given a $\synPi$-structure on $\CC$, and $(\Gamma,A,B)$ as above, write $\app_{A,B}$ for the morphism 
\begin{multline*}
  q(q(p_{\synPi(A,B)} \cdot p_{p_{\synPi(A,B)}^*A},A),B) \cdot {} \\
  \app_{(p_{\synPi(A,B)} \cdot p_{p_{\synPi(A,B)}^*A})^* A,\, (p_{\synPi(A,B)} \cdot p_{p_{\synPi(A,B)}^*A})^* B}\, \big( (1,p_{p_{\synPi(A,B)}^*A}) , (1, q(p_{\synPi(A,B)},A)) \big) \\
  \colon (\Gamma,\synPi(A,B),p_{\synPi(A,B)}^*A) \to (\Gamma,A,B);
\end{multline*}
the general form $\app_{A,B}(k,a)$ can be re-derived from these instances.  Also, for objects $(\Gamma,A)$, $(\Gamma,B)$ in $\CC$, write $(\Gamma,[A,B])$ for $(\Gamma, \synPi(A,p_A^*B))$.

\begin{definition}
A \emph{$\synSigma$-type structure} on a contextual category $\CC$ consists of:
\begin{enumerate}
  \item for each $(\Gamma, A, B) \in \ob_{n+2} \CC$, an object $(\Gamma, \synSigma(A, B)) \in \ob_{n+1} \CC$;
  \item for each such $(\Gamma,A,B)$, a morphism $\pair_{A,B} \colon (\Gamma,A,B) \to (\Gamma, \synSigma(A,B))$ over $\Gamma$;
  \item for each $(\Gamma,A,B)$, object $(\Gamma, \synSigma(A,B), C)$, and map $d \colon (\Gamma,A,B) \to (\Gamma, \synSigma(A,B), C)$ with $p_C \cdot d = \pair_{A,B}$, a section $\synsplit_d \colon (\Gamma,\synSigma(A,B)) \to (\Gamma, \synSigma(A,B), C)$, with $\synsplit_d \cdot \pair_{A,B} = d$;
\item such that for $f \colon \Gamma' \to \Gamma$, and all appropriate arguments as above,
   \begin{gather*}
     f^*(\Gamma, \synSigma(A, B)) = (\Gamma', \synSigma(f^*A, f^*B)), \\
     f^*\pair_{A,B} = \pair_{f^*A,f^*B}, \qquad f^*\synsplit_d = \synsplit_{f^*d}.
   \end{gather*}
 \end{enumerate}
\end{definition}

\begin{definition}
An \emph{$\synId$-type structure} on a contextual category $\CC$ consists of:
\begin{enumerate}
  \item for each $(\Gamma, A)$, an object $(\Gamma, A, p_A^*A, \synId_A)$;
  \item for each $(\Gamma,A)$, a morphism $\refl_A \colon (\Gamma,A) \to (\Gamma, A, p_A^*A, \synId_A)$, such that $p_{\synId_A} \cdot \refl_A = (1_A,1_A) \colon (\Gamma,A) \to (\Gamma, A, p_A^*A)$;
  \item for each $(\Gamma, A, p_A^*A, \synId_A, C)$ and $d \colon (\Gamma,A) \to (\Gamma, A, p_A^*A, \synId_A, C)$ with $p_C \cdot d = \refl_A$, a section $\synJ_{C,d} \colon (\Gamma, A, p_A^*A, \synId_A) \to (\Gamma, A, p_A^*A, \synId_A, C)$, such that $\synJ_{C,d} \cdot \refl_A = d$;
\item such that for $f \colon \Gamma' \to \Gamma$, and all appropriate arguments as above,
   \begin{gather*}
     f^*(\Gamma, A, p_A^*A, \synId_A) = (\Gamma', f^*A, (p_{f^*A})^*(f^*A), \synId_{f^*A}), \\
     f^*\refl_A = \refl_{f^*A}, \qquad f^*\synJ_{C,d} = \synJ_{f^*C,f^*d}.
   \end{gather*}
 \end{enumerate}
\end{definition}

\begin{definition}
Given a contextual category $\CC$ equipped with a $\synPi$-type structure, a \emph{$\synW$-type structure} on $\CC$ consists of:
\begin{enumerate}
   \item for each $(\Gamma, A, B)$, an object $(\Gamma, \synW(A,B))$;
  \item for each $(\Gamma,A,B)$, a map over $\Gamma$
\[ \syn{sup}_{A,B} \colon (\Gamma, A, \synPi(B,p_B^*p_A^*\synW(A,B))) \to (\Gamma, \synW(A,B)); \]
  \item for each $(\Gamma,\synW(A,B),C)$ and map 
\begin{multline*}
  d \colon \big( \Gamma, A, [B,p_A^*\synW(A,B)], \synPi \big( p_{[B,p_A^*\synW(A,B)]}^*B, \\
  \app(p_{p_{[B,p_A^*\synW(A,B)]}^*B},q(p_{[B,p_A^*\synW(A,B)]},B))^*C\big)\big) \to (\Gamma,\synW(A,B),C)
\end{multline*}
such that 
\[ p_C \cdot d = \syn{sup}_{A,B} \cdot p_{\synPi(p_{[B,p_A^*\synW(A,B)]}^*B,\app(p_{p_{[B,p_A^*\synW(A,B)]}^*B},q(p_{[B,p_A^*\synW(A,B)]},B))^*C)}, \]
a section $\wrec_{C,d} \colon (\Gamma,\synW(A,B)) \to (\Gamma,\synW(A,B),C)$, such that 
\begin{multline*}
  \wrec_{C,d} \cdot \syn{sup}_{A,B} = \\
  d \cdot \lambda(\wrec_{C,d} \cdot \app(p_{p_{[B,p_A^*\synW(A,B)]}^*B},q(p_{[B,p_A^*\synW(A,B)]},B)));
\end{multline*}
  \item and such that for $f \colon \Gamma' \to \Gamma$, and all appropriate arguments as above,
    \begin{gather*}
     f^*(\Gamma, \synW(A,B)) = (\Gamma', \synW(f^*A,f^*B)), \\
     f^*\syn{sup}_{A,B} = \syn{sup}_{f^*A,f^*B}, \qquad f^*\wrec_{C,d} = \wrec_{f^*C,f^*d}.
    \end{gather*}
 \end{enumerate}
\end{definition}

\begin{definition}
A \emph{zero-type structure} on a contextual category $\CC$ consists of:
\begin{enumerate}
  \item for each $\Gamma$, an object $(\Gamma,\synZero_\Gamma)$;
  \item for any object $(\Gamma,\synZero_\Gamma, C)$, a section $\syncase_C \colon (\Gamma,\synZero_\Gamma) \to (\Gamma,\synZero_\Gamma, C)$;
  \item such that for each $f \colon \Gamma' \to \Gamma$, $f^*(\Gamma, \synZero_\Gamma) = (\Gamma', \synZero_{\Gamma'})$; and for each such $f$ and $(\Gamma,\synZero_\Gamma, C)$ as above, $f^*(\syncase_C) = \syncase_{f^*C}$.
  \end{enumerate}
\end{definition}

\begin{definition}
A \emph{unit-type structure} on a contextual category $\CC$ consists of:
\begin{enumerate}
  \item for each $\Gamma$, an object $(\Gamma,\synOne_\Gamma)$;
  \item a section $\ast_{\Gamma} \colon (\Gamma) \to (\Gamma,\synOne_\Gamma)$;
  \item for each object $(\Gamma,\synOne_\Gamma, C)$, and map $d \colon \Gamma \to (\Gamma,\synOne_\Gamma, C)$ with $p_C \cdot d = \ast_{\Gamma}$, a section $\synrec_{C,d} \colon (\Gamma,\synOne_\Gamma) \to (\Gamma,\synOne_\Gamma, C)$, such that $\synrec_{C,d} \cdot \ast_{\Gamma} = d$;
  \item such that for $f \colon \Gamma' \to \Gamma$, and appropriate arguments as above,
   \begin{gather*}
     f^*(\Gamma, \synOne_\Gamma) = (\Gamma', \synOne_{\Gamma'}), \\
     f^*\ast_{\Gamma} = \ast_{\Gamma'}, \qquad f^*\synrec_{C,d} = \synrec_{f^*C,f^*d}.
   \end{gather*} 
 \end{enumerate}
\end{definition}
% \todo{Add the target type argument into recursors where it’s not already shown, since it’s \emph{not} typically deducible?}

\begin{definition}
A \emph{sum-type structure} on a contextual category $\CC$ consists of:
\begin{enumerate}
  \item for any objects $(\Gamma,A)$ and $(\Gamma,B)$, an object $(\Gamma,A+B)$;
  \item for each such $(\Gamma,A)$, $(\Gamma,B)$, maps $\inl_{A,B} \colon (\Gamma,A) \to (\Gamma,A+B)$ and $\inr_{A,B} \colon (\Gamma,B) \to$ $(\Gamma,A+B)$, over $\Gamma$;
  \item for each object $(\Gamma,A+B, C)$, and maps $d_l \colon (\Gamma,A) \to (\Gamma,A+B, C)$, $d_r \colon (\Gamma,B) \to$ $(\Gamma,A+B, C)$ with $p_C \cdot d_l = \inl_{A,B}$ and $p_C \cdot d_r = \inr_{A,B}$, a section $\syncase_{C,d_l,d_r} \colon (\Gamma,A+B) \to (\Gamma,A+B,C)$, such that $\syncase_{C,d_l,d_r} \cdot \inl_{A,B} = d_l$ and $\syncase_{C,d_l,d_r} \cdot \inr_{A,B} = d_r$;
  \item such that for $f \colon \Gamma' \to \Gamma$, and appropriate arguments as above,
   \begin{gather*}
     f^*(\Gamma,A+B) = (\Gamma', f^*A + f^*B), \qquad f^*(\inl_{A,B}) = \inl_{f^*A,f^*B}, \\
     f^*(\inr_{A,B}) = \inr_{f^*A,f^*B}, \qquad f^*(\syncase_{C,d_l,d_r}) = \syncase_{f^*C,f^*d_l,f^*d_r}.
   \end{gather*} 
 \end{enumerate}
\end{definition}

\subsection{Universes}

As in Section~\ref{subsec:universe-rules}, one may consider a universe closed under some or all of the logical structure of the ambient contextual category; that is, carrying operations reflecting the global operations on types.

\begin{definition}
A \emph{universe} in a contextual category $\CC$ is a distinguished object $(\synU,\el) \in \ob_2 \CC$.

Assuming a $\synPi$-type structure on $\CC$, we say $(\synU,\el)$ is \emph{closed under $\synPi$-types} if for all maps $a \colon \Gamma \to \synU$ and $b \colon (\Gamma, a^*\el) \to \synU$, we are given a map $\synpi(a,b) \colon \Gamma \to \synU$, such that $(\Gamma,\synpi(a,b)^*\el) = (\Gamma,\synPi(a^*\el,b^*\el))$, and moreover stably in $\Gamma$, i.e.\ such that for $f \colon \Gamma' \to \Gamma$ and $a$, $b$ as above, $f^*(\synpi(a,b)) = \synpi(f^*a,f^*b)$.

Similarly, given a $\synSigma$-type structure on $\CC$, say $(\synU,\el)$ is \emph{closed under $\synSigma$-types} if for each $a \colon \Gamma \to \synU$ and $b \colon (\Gamma, a^*\el) \to \synU$, we are given $\synsigma(a,b) \colon \Gamma \to \synU$, such that $(\Gamma,\synsigma(a,b)^*\el) = (\Gamma,\synSigma(a^*\el,b^*\el))$, and such that for $f \colon \Gamma' \to \Gamma$ and $a$, $b$ as above, $f^*(\synsigma(a,b)) = \synsigma(f^*a,f^*b)$.

Given an $\synId$-type structure on $\CC$, say $(\synU,\el)$ is \emph{closed under $\synId$-types} if for each $a \colon \Gamma \to \synU$, we are given a map $\synid_a \colon (\Gamma,a^*\el,p_{a^*\el}^*a^*\el) \to \synU$, such that $(\Gamma,a^*\el,p_{a^*\el}^*a^*\el,\synid_a^*\el) = (\Gamma,a^*\el,p_{a^*\el}^*a^*\el,\synId_{a^*\el})$, and such that for $f \colon \Gamma' \to \Gamma$ and $a$ as above, $f^*(\synid_a) = \synid_{f^*a}$.

Given a zero-type (resp.\ unit-type) structure on $\CC$, say $(\synU,\el)$ \emph{contains $\synZero$ (resp.\ $\synOne$)} if we are given a map $\synz \colon \pt \to \synU$ (resp.\ $\syno$) such that $\synz^*\el = \synZero$ ($\syno^*\el = \synOne$).

Given a sum-type structure on $\CC$, say $(\synU,\el)$ is \emph{closed under sum types} if for each pair of maps $a,b \colon \Gamma \to \synU$, we are given $a \smallplus b \colon \Gamma \to \synU$, such that $(a \smallplus b)^*\el = a^*\el + b^*\el$, and moreover such that $f \colon \Gamma' \to \Gamma$ and $a,b$ as above, $f^*(a \smallplus b) = f^*a \smallplus f^*b$.

Given $\synW$-type structure on $\CC$, say $(\synU,\el)$ is \emph{closed under $\synW$-types} if for each $a \colon \Gamma \to \synU$ and $b \colon (\Gamma, a^*\el) \to \synU$, we are given $\synw(a,b) \colon \Gamma \to \synU$, such that $(\Gamma,\synw(a,b)^*\el) = (\Gamma,\synW(a^*\el,b^*\el))$, and such that for $f \colon \Gamma' \to \Gamma$ and $a$, $b$ as above, $f^*(\synw(a,b)) = \synw(f^*a,f^*b)$.
\end{definition}

\subsection{Extensionality and Univalence} \label{subsec:optional-rules-alg}

For the following group of rules, let $\CC$ be a contextual category equipped with chosen $\synPi$- and $\synId$-type structures.

\begin{definition}
Say that $\CC$ satisfies the \emph{$\synPi$-$\eta$ rule} if for any $(\Gamma,A,B)$, the ``$\eta$-expansion'' map
\[ q(p_{\synPi(A,B)},\synPi(A,B)) \cdot \lambda (1_{p_{\synPi(A,B)}^*A},\app_{A,B})  \colon (\Gamma,\synPi(A,B)) \to (\Gamma,\synPi(A,B)) \]
is equal to $1_{(\Gamma,\synPi(A,B))}$.

A \emph{$\synPi$-\textsc{ext} structure} on $\CC$ is an operation giving for each $(\Gamma,A,B)$ a map 
\begin{multline*}
  \ext_{A,B} \colon (\Gamma,\synPi(A,B),p_{\synPi(A,B)}^*\synPi(A,B),\syn{Htp}_{A,B}) \to \\
  (\Gamma,\synPi(A,B),p_{\synPi(A,B)}^*\synPi(A,B),\synId_{A,B})
\end{multline*}
over $(\Gamma,\synPi(A,B),p_{\synPi(A,B)}^*\synPi(A,B))$, stably in $\Gamma$, where $\syn{Htp}_{A,B}$ is the object
\begin{multline*}
   \Big( \Gamma,\ \synPi(A,B),\ p_{\synPi(A,B)}^*\synPi(A,B),\ 
   \synPi\Big( \big(p_{\synPi(A,B)} \cdot p_{p_{\synPi(A,B)}^*\synPi(A,B)} \big)^*A, \\
   \big(\app_{A,B} \cdot q(p_{\synPi(A,B)},A), \app_{A,B} \cdot q(q(p_{\synPi(A,B)},\synPi(A,B)),A) \big)^* \synId_B \Big) \Big).
\end{multline*}

Given a $\synPi$-\textsc{ext} structure on $\CC$, a \emph{$\synPi$-\textsc{ext-comp-prop} structure} for it is an operation giving, for each $(\Gamma,A,B)$ and section $f \colon \Gamma \to (\Gamma,\synPi(A,B))$, a map 
\begin{multline*} \extcomp(f) \colon \Gamma \to \\
  (\Gamma,\,\synPi(A,B),\, p_{\synPi(A,B)}^*\synPi(A,B),\, \synId_{\synPi(A,B)},\, p_{\synId_{\synPi(A,B)}}^*\synId_{\synPi(A,B)},\, \synId_{\synId_{\synPi(A,B)}})
\end{multline*}
over the pair of maps
\begin{multline*} \ext_{A,B}(f,g) \cdot \lambda(1_A,\refl_B \cdot p_{p_A^*B}\cdot \app((1_A,f),(1_A,1_A)))\ \ ,\ \ \refl_{\synPi(A,B)} \cdot f \\
\colon \Gamma \to (\Gamma,\,\synPi(A,B),\, p_{\synPi(A,B)}^*\synPi(A,B),\, \synId_{\synPi(A,B)}),
\end{multline*}
stably as ever in $\Gamma$.

\end{definition}

Before defining the Univalence Axiom, we first (as in Section~\ref{subsec:type-theoretic-univalence}) set up several auxiliary definitions.  We assume, from here on, that $\CC$ is equipped also with $\synSigma$- and $\synId$-type structures.

\begin{definition}
For objects $(\Gamma,X)$, $(\Gamma,Y)$ of $\CC$, let 
\[ \syn{exch}_{X,Y} \colon (\Gamma,X,p_X^*Y) \to (\Gamma,Y,p_Y^*X) \]
be the evident ``exchange'' map, $(q(p_X,Y), p_{p_X^*Y})$.
\end{definition}

\begin{definition}
Let $(\Gamma,A)$, $(\Gamma,B)$ be objects of $\CC$.

Take $(\Gamma,[A,B],\synLHInv_{A,B})$ to be the object
\begin{multline*}
  \Big(\Gamma,\ [A,B],\ \synSigma\Big( p_{[A,B]}^*[B,A] ,\, \synPi \Big( (p_{[A,B]}  \cdot p_{p_{[A,B]}^*[B,A]})^*A, \\
  \big(q(p_B,A) \cdot \app_{B,A} \cdot q(p_{p_{[B,A]}^*A},p_{[B,A]}^*B) \cdot \app_{p_{[B,A]}^*A,\,p_{[B,A]}^*B}  \qquad \qquad \\
   \cdot q(\syn{exch}_{[A,B],[B,A]},A),\ q(p_{[A,B]} \cdot p_{p_{[A,B]}^*[B,A]},A)\big)^*\synId_A \Big) \Big) \Big).
\end{multline*}
% (G,A,p_A^*B) -> (G,B,p_B^*A) : (q(p_A,B),p_{p_A^*B}) 

% LHInv:
% obtained by Sigma from a context (G, [A,B], p_{[A,B]}^*[B,A], ?? )
% where ?? is obtained by Pi from context (G, [A,B], p_{[A,B]}^*[B,A], p*p*A, ?? )
% where ?? is obtained by pullback of Id_B along two maps (G, [A,B], p_{[A,B]}^*[B,A], p*p*A) -> (G, A):
% second is q(p_{p*[B,A]}.p_{[A,B]},A),
% first is composite:
% (G, [A,B], p_{[A,B]}^*[B,A], p*p*A) —q(exch_{[A,B],[A,B]},A)—>(G, [B,A], p_{[B,A]}^*[A,B], p*p*A)
% —ev_{p_{[B,A]}^*A,p_{[B,A]}^*B,}—> (G, [B,A], p*A, p_{p*A}*p*B)
% —q(p_{p*A})—> (G, [B,A], p*B)
% —ev_{B,A}—> (G, B, p_B^*A)
% —q(p_B,A)—> (G, A)

Similarly, take $(\Gamma,[A,B],\synRHInv_{A,B})$ to be the object
\begin{multline*}
  \Big(\Gamma,\ [A,B],\ \synSigma\Big( p_{[A,B]}^*[B,A] ,\, \synPi \Big( (p_{[A,B]}  \cdot p_{p_{[A,B]}^*[B,A]})^*B, \\
  \big(q(p_A,B) \cdot \app_{A,B} \cdot q(p_{p_{[A,B]}^*B},p_{[A,B]}^*A) \cdot \app_{p_{[A,B]}^*B,\,p_{[A,B]}^*A}, \qquad \qquad \\
   q(p_{[A,B]} \cdot p_{p_{[A,B]}^*[B,A]},B)\big)^*\synId_B \Big) \Big) \Big).
\end{multline*}

% RHInv:
% obtained by Sigma from a context (G, [A,B], p_{[A,B]}^*[B,A], ?? )
% where ?? is obtained by Pi from context (G, [A,B], p_{[A,B]}^*[B,A], p*p*B, ?? )
% where ?? is obtained by pullback of Id_B along two maps (G, [A,B], p_{[A,B]}^*[B,A], p*p*B) -> (G, B):
% second is q(p_{p*[B,A]}.p_{[A,B]},B),
% first is composite:
% (G, [A,B], p_{[A,B]}^*[B,A], p*p*B) —ev_{p_{[A,B]}^*B,p_{[A,B]}^*A,}—> (G, [A,B], p*B, p_{p*B}*p*A)
% —q(p_{p*B})—> (G, [A,B], p*A)
% —ev_{A,B}—> (G, A, p_A^*B)
% —q(p_A,B)—> (G, B)

Now, set 
\[ (\Gamma,[A,B],\synisHIso_{A,B}) := (\Gamma,[A,B],\synLHInv_{A,B} \times \synRHInv_{A,B}), \]
\[ (\Gamma,\synHIso(A,B)) := (\Gamma,\synSigma([A,B],\synisHIso_{A,B}). \]
\end{definition}

\begin{definition}
For any $(\Gamma,A)$, there is a canonical ``identity'' section 
  \[ \syn{id}_A := \lambda( (1_A,1_A) ) \colon \Gamma \to (\Gamma,[A,A]); \]
and this morevover lifts to a section
$ \syn{id}^{\syn{isHIso}}_A \colon \Gamma \to (\Gamma,[A,A],\synisHIso_{A,A}), $
given in full by
\begin{multline*}
  \pair_{\synLHInv_{A,A},\synRHInv_{A,A}} \cdot \Big( % \\ linebreak here, if one needed
  \pair_{p_{[A,A]}^*[A,A], \syn{H}_{\syn{L}} } \cdot q( (\syn{id}_A, \syn{id}_A), \syn{H}_{\syn{L}}) \cdot \lambda((1_A,\refl_A)),\\
  \pair_{p_{[A,A]}^*[A,A], \syn{H}_{\syn{R}} } \cdot q( (\syn{id}_A, \syn{id}_A), \syn{H}_{\syn{R}}) \cdot \lambda((1_A,\refl_A))  \Big),
\end{multline*}
where $\syn{H}_{\syn{L}}$ is the object
\begin{multline*}
  \Big(\Gamma,\ [A,A],\ p_{[A,A]}^*[A,A] ,\, \synPi \Big( (p_{[A,A]}  \cdot p_{p_{[A,A]}^*[A,A]})^*A, \\
  \big(q(p_A,A) \cdot \app_{A,A} \cdot q(p_{p_{[A,A]}^*A},p_{[A,A]}^*A) \cdot \app_{p_{[A,A]}^*A,\,p_{[A,A]}^*A}  \qquad \qquad \\
  \cdot q(\syn{exch}_{[A,A],[A,A]},A),\ q(p_{[A,A]} \cdot p_{p_{[A,A]}^*[A,A]},A)\big)^*\synId_A \Big) \Big)
\end{multline*}
and $\syn{H}_{\syn{R}}$ is the same but with $q(\syn{exch}_{[A,A],[A,A]},A)$ omitted. 

Lastly, set $\syn{id}^{\syn{HIso}} := \pair_{[A,A],\synisHIso_{A,A}} \cdot \syn{id}^{\syn{isHIso}}_A \colon \Gamma \to (\Gamma, \synHIso(A,A))$.
\end{definition}

\begin{definition}
For any object $(\Gamma,A,B)$, we can now define a map
\[ \syn{w}_{A,B} \colon (\Gamma,A,p_A^*A,\synId_A) \to (\Gamma,A,p_A^*A,\synHIso(p_{p_A^*A}^*B,q(p_A,A)^*B)) \]
by
\begin{multline*}
  \syn{w}_{A,B} := q(p_{\synId_A}, \synHIso(p_{p_A^*A}^*B,q(p_A,A)^*B)) \cdot {}\\
  \synJ\big( p_{\synId_A}^*\synHIso(p_{p_A^*A}^*B,q(p_A,A)^*B), \\
    q(\refl_A,p_{\synId_A}^*\synHIso(p_{p_A^*A}^*B,q(p_A,A)^*B)) \cdot \syn{id}^{\syn{HIso}}_B \big)
\end{multline*}

Given this, define the predicate ``$B$ is a univalent family over $A$'' as:
\begin{multline*}
 (\Gamma,\syn{isUvt}(A,B)) := \\
  \big(\Gamma,\synPi\big(A,\synPi\big(p_A^*A,\lambda((1_{\synId_A},w_{A,B}))^*\synisHIso_{\synId_A,\synHIso(p_{p_A^*A}^*B,q(p_A,A)^*B)}\big)\big)\big).
\end{multline*}
\end{definition}

\begin{definition}
Given a universe $(\synU,\el)$ in $\CC$, say $(\synU,\el)$ \emph{satisfies the Univalence Axiom} if $\CC$ is equipped with a map $\syn{uvt}_{\synU,\el} \colon 1 \to \syn{isUvt}(\synU,\el)$.
\end{definition}

%%% Local Variables: 
%%% mode: latex
%%% TeX-master: "simplicial-model"
%%% End: 

\bibliographystyle{amsalphaurlmod}
\bibliography{model-bibliography}

\end{document}